\newif\ifpictures
\newcolumntype{L}[1]{>{\raggedright\arraybackslash}p{#1}}
\newcolumntype{C}[1]{>{\centering\arraybackslash}p{#1}}
\newcolumntype{R}[1]{>{\raggedleft\arraybackslash}p{#1}}
\numberwithin{equation}{section}
\newtheorem{thm}{Theorem}
\newtheorem{prop}[thm]{Proposition}
\newtheorem{lemma}[thm]{Lemma}
\newtheorem{cor}[thm]{Corollary}
\theoremstyle{definition}
\newtheorem{definition}[thm]{Definition}
\newtheorem{example}[thm]{Example}
\newtheorem{remark}[thm]{Remark}
\numberwithin{thm}{section}
\newcounter{FNC}[page]
\def\newfootnote#1{{\addtocounter{FNC}{2}$^\fnsymbol{FNC}$%
     \let\thefootnote\relax\footnotetext{$^\fnsymbol{FNC}$#1}}}
\newcommand{\N}{\mathbb{N}}
\newcommand{\R}{\mathbb{R}}
\renewcommand{\P}{\mathbb{P}}
\newcommand{\lf}{\left}
\newcommand{\ri}{\right}
\newcommand{\ovl}{\overline}
\newcommand\cV{{\ensuremath{\mathcal{V}}}\xspace}
\newcommand{\alp}{\alpha}
\newcommand{\lam}{\lambda}
\newcommand{\Sig}{\Sigma}
\newcommand{\struc}[1]{{\color{blue} #1}}
\DeclareMathOperator{\sgn}{sgn}
\DeclareMathOperator{\conv}{conv}
\DeclareMathOperator{\aff}{aff}
\DeclareMathOperator{\New}{New}
\DeclareMathOperator{\Int}{int}
\DeclareMathOperator{\Relint}{relint}
\DeclareMathOperator{\rank}{rank}
\def\endexa{\hfill$\hexagon$}
\tikzset{>=latex} 
\title{Real zeros of SONC polynomials}
\author{Mareike Dressler}
\address{Mareike Dressler, Department of Mathematics, University of California San Diego, 9500 Gilman Drive, La Jolla, California 92093, USA.\medskip}
\email{mdressler@ucsd.edu}
\subjclass[2010]{Primary: 14P99, 26C10, 90C25; Secondary: 12D15, 52A99, 05E99}
\keywords{
Certificate, nonnegative polynomial, 
sums of nonnegative circuit polynomials, sums of squares, real zeros, faces, dimensional differences}
\begin{document}

	
\begin{abstract}
We provide a complete and explicit characterization of the real zeros of sums of nonnegative circuit (SONC) polynomials, a recent certificate for nonnegative polynomials independent of sums of squares. As a consequence, we derive an exact determination of the number $B''_{n+1,2d}$ for all $n$ and $d$. $B''_{n+1,2d}$ is defined to be the supremum of the number of zeros of all homogeneous $n+1$-variate polynomials of degree $2d$ in the SONC cone. The analogously defined numbers $B_{n+1,2d}$ and $B'_{n+1,2d}$ for the nonnegativity cone and the cone of sums of squares were first introduced and studied by Choi, Lam, and Reznick. In strong contrast to our case, the determination of both $B_{n+1,2d}$ and $B'_{n+1,2d}$ for general $n$ and $d$ is still an open question. 

Moreover, we initiate the study of the exposed faces of the SONC cone. In particular, we explicitly consider small dimensions and analyze dimension bounds on the exposed faces. When comparing the exposed faces of the SONC cone with those of the nonnegativity cone we observe dimensional differences between them.
\end{abstract}

\maketitle

\section{Introduction}
\label{Sec:Introduction}

One main goal in real algebraic geometry with crucial importance to polynomial optimization is to better understand the cone of nonnegative polynomials. Let \struc{$P_{n,2d}$} be the cone of nonnegative $n$-variate polynomials of degree at most $2d$. Since testing membership in $P_{n,2d}$ is very hard one is interested in finding and studying cones approximating $P_{n,2d}$, for which membership can be tested efficiently. A well-known example of such an inner approximation of the nonnegativity cone is \struc{$\Sig_{n,2d}$}, the cone of $n$-variate sums of squares (SOS) of degree at most $2d$. Their relationship has been studied since Hilbert's famous paper in 1888 \cite{Hilbert:Seminal}.   
In recent years, several other nonnegativity certificates have been proposed. For instance, Ahmadi and Majumdar considered the cones of diagonally\linebreak dominant sums of squares and scaled-diagonally sums of squares both lying inside $\Sig_{n,2d}$, see \cite{Ahmadi:Majumdar:DSOS&SDSOS}. Iliman and de Wolff \cite{Iliman:deWolff:Circuits} introduced the cone of \struc{\textit{sums of nonnegative circuit~(SONC)}} polynomials, which is independent of the SOS cone. SONC polynomials are a generalization of \struc{\textit{agiforms}} studied by Reznick \cite{Reznick:AGI}, whose nonnegativity can be certified via the arithmetic-geometric mean inequality. We denote the cone of all $n$-variate SONC polynomials of degree at most $2d$ by \struc{$C_{n,2d}$}. For a detailed definition of SONC polynomials and further details see Section \ref{SubSec:Prelim_SONC}. 

In the present paper, we analyze convex geometric aspects of the SONC cone. 
Studying convex geometric structures such as the boundary, the faces, and the dual cones of $P_{n,2d}$ and $\Sig_{n,2d}$ is an active area of research in convex algebraic geometry with many pending issues. For an overview of results, see \cite{Blekherman:Parrilo:Thomas}.

Analyzing these structures for the SONC cone is therefore naturally embedded in this evolving field.
Moreover, we hope that a better understanding of $C_{n,2d}$ and its convex geometric properties will also lead to an improvement from the practical point of view. Since SONC polynomials serve as a certificate of nonnegativity, which can be tested efficiently by geometric and relative entropy programs, they can be used in applications to polynomial optimization problems, as seen, e.g., in \cite{Iliman:deWolff:GP}, \cite{Dressler:Iliman:deWolff}, \cite{Dressler:Iliman:deWolff:REP}, and \cite{Dressler:Heuer:Naumann:deWolff:SONC_LP}. 

\vspace{0.4cm}
The primary focus of our work is on the study of the real zeros of SONC polynomials. 
Understanding the real zeros of polynomials is a research subject of intrinsic interest with a long and rich history and is especially useful for polynomials with certain properties like nonnegativity. Recall that the real zeros of nonnegative polynomials and sums of squares are in particular used to study the set theoretic difference $P_{n,2d}\setminus \Sig_{n,2d}$ and to construct explicit examples of nonnegative non-SOS forms. This started with Hilbert's above mentioned paper, \cite{Hilbert:Seminal}, which influenced many works on this subject, e.g., \cite{Choi:Lam:OldHilbertQuestion,Choi:Lam:ExtremalpsdForms,CLR:Knebusch:TransversalZerosandPSDForms, Reznick:SurveyHilbert17th,Kunert:Scheiderer:ExtremePositiveTernarySextics,Xu:Yao:Hilberts17forTernQuart}. 
Real zeros of nonnegative biquadratic forms are analyzed in  \cite{Quarez:RealZerosBiquadratric,Buckely:Sivic:NonnegativeBiquadraticForms}. We emphasize the work \cite{CLR:realzeros}, where Choi, Lam, and Reznick discuss nonnegative forms and properties of their real zeros, including their number.
Motivated by these former works, we provide an in-depth examination of the real zeros of SONC polynomials. Our main contribution is a complete and explicit characterization of the real zeros of both homogeneous and inhomogeneous SONC polynomials, see Section~\ref{Sec:RealZeros}. Among other results, we provide in Theorem~\ref{Thm:ExactNumberAffineZeroes} and Theorem~\ref{Thm:ExactNumberHomogZeroes} the explicit number of zeros of a nonnegative circuit polynomial in the affine and in the projective case. 

As an interesting consequence we obtain an exact determination of $B''_{n+1,2d}$ for all $n$ and $d$, see Theorem~\ref{Thm:B''}. \struc{$B''_{n+1,2d}$} is defined to be the supremum of the number of zeros of all homogeneous $n+1$-variate polynomials of degree $2d$ in the SONC cone, see Definition~\ref{Def:B''}. The analogously defined numbers $B_{n+1,2d}$ and $B'_{n+1,2d}$ for $\ovl P_{n,2d}$ and $\ovl \Sig_{n,2d}$ respectively were first introduced and studied by Choi, Lam, and Reznick in \cite{CLR:realzeros}. Here $\struc{\ovl P_{n+1,2d}}$ and $\struc{\ovl\Sig_{n+1,2d}}$ denote the cone of nonnegative forms and the cone of sums of squares of forms with $n+1$ variables of degree $2d$ respectively. In strong contrast to our case, the determination of both $B_{n+1,2d}$ and $B'_{n+1,2d}$ for general $n$ and $d$ is still an open question.

\vspace{0.4cm}
Beyond the study of the real zeros, the purpose of the paper is to initiate the study of the exposed faces of $C_{n,2d}$. A face $F$ is an \struc{\emph{exposed face}} of a convex set $S$ if there exists a supporting hyperplane $H$ to $S$ such that $F = S \cap H$.  

Analyzing the \emph{facial structures} of $\ovl P_{n+1,2d}$ and $\ovl \Sig_{n+1,2d}$ as well as the possible \emph{dimensions} of their faces is still an open problem. In the cases $n+1=2$, $2d=2$, and, to some extent, the case $(n+1,2d)=(3,4)$, thus, the Hilbert cases where $\ovl P_{n+1,2d} = \ovl \Sig_{n+1,2d}$, this problem is relatively well understood, see \cite{Barvinok:CourseConvexity}. But generally, only partial results are known. In particular, the facial analysis in the case where the two cones differ is important to better understand the gap between $\ovl P_{n+1,2d}$ and $\ovl \Sig_{n+1,2d}$.
Recently, Blekherman \cite{Blekherman:PSDandSOS} provided a geometric construction for the faces of the SOS cone that are not faces of $\ovl P_{n+1,2d}$, and in \cite{Blekherman:Iliman:Kubitzke:ExposedFaces} especially the dimensions of the exposed faces are investigated, this will be further discussed in Section~\ref{Sec:ExposedFaces}. 
The boundary of $\ovl P_{n+1,2d}$ consists of forms having a nontrivial zero, hence understanding the facial structure is closely related to the study of the real zeros of the cones.
Our contribution to this topic is an upper bound on the dimension of the exposed faces of $C_{n,2d}$, see Proposition~\ref{Prop:DimensionBoundExposedFaces}. We refine this bound in Proposition~\ref{Prop:DimensionBoundExposedFacesRefined}. In fact, in the univariate case, we state the dimension explicitly, see Lemma~\ref{Lem:DimExpFaceUnivariateCase}.

\medskip
The paper is structured as follows. Section~\ref{Sec:Preliminaries} fixes terminology, reviews basic concepts from convex geometry and about polynomials, and provides the theoretical background about real zeros. In Section~\ref{SubSec:Prelim_PSD-SOS} we briefly discuss interesting results about the real zeros of nonnegative polynomials and sums of squares. Section~\ref{SubSec:Prelim_SONC} introduces the main object of study, SONC polynomials. Afterward, in Section~\ref{Sec:AnalysisSONC} we present some structural results on the SONC cone. Section~\ref{Sec:RealZeros} is devoted to a comprehensive discussion on the real zeros of SONC polynomials. We add interesting observations as a result of the new knowledge about the real zeros in Section~\ref{SubSec:RealZeros_B-numbers}.  Finally, in Section~\ref{Sec:ExposedFaces}, we study the exposed faces of the SONC cone in small dimension.

%
\subsection*{Acknowledgments}
%
The results of this paper were mostly obtained during my PhD at the
Goethe University Frankfurt under Thorsten Theobald’s supervision. I am deeply grateful for his support and guidance throughout my PhD. I thank him also for valuable suggestions. I would like to further thank Timo de Wolff and an anonymous referee for helpful comments. 

\section{Preliminaries}
\label{Sec:Preliminaries}

First, we establish some notational conventions and introduce basic concepts from  convex geometry. Let $\struc{\N}=\{0,1,2,\ldots\}$ denote the set of nonnegative integers and \struc{$\R$} the field of real numbers. \struc{$\R_{>0}$} indicates the positive elements of $\R$. We also introduce the notation $\struc{\N^*}= \N  \setminus \{0\}$ and analogously $\struc{\R^*} = \R \setminus \{0\}$.
Throughout the paper bold letters denote  $n$-dimensional vectors unless noted otherwise. For a finite set $A \subset \N^n$ we denote by $\struc{\conv(A)}$ the \emph{convex hull} of $A$, and by $\struc{V(A)}$ the set of all the \emph{vertices} of $\conv(A)$. Similarly, we identify by $\struc{V(P)}$ the vertex set of any given polytope $P$. 
We call a lattice point $\boldsymbol{\alp} \in \N^n$ \struc{\emph{even}} if every entry $\alp_i$ is even, i.e., $\boldsymbol{\alp} \in (2\N)^n$. 
Furthermore, we denote by \struc{$\Delta_{n,2d}$} the \emph{standard simplex} in $n$ variables of edge length $2d$, i.e., the simplex satisfying $V(\Delta_{n,2d}) = \{\textbf{0},2d \cdot \mathbf{e}_1,\ldots,2d \cdot \mathbf{e}_n\}$.

Given a convex set $S \subset \R^n$, a \struc{\emph{face}} of $S$ is a subset $F\subseteq S$ such that for any point $p\in F$, whenever $p$ can be written as a convex combination of elements in $S$, these elements must belong to $F$. The \struc{\emph{dimension}} of a face $F$ is defined as the dimension of its affine hull, i.e., $\dim(F) \coloneqq \dim (\aff(F))$. 
We denote by $\struc{\Int(S)}$ ($\struc{\Relint(S)}$) the \emph{(relative) interior} and by $\struc{\partial S}$ the \emph{boundary} of $S$.

\subsection{Polynomials}
\label{SubSec:Prelim_Polynomials}

Let $\struc{\R[\mathbf{x}] = \R[x_1,\ldots,x_n]}$ be the ring of real $n$-variate polynomials. We usually consider \emph{polynomials} $f \in \R[\mathbf{x}]$ \textit{supported} on a finite set $A \subset \N^n$. Thus, $f$ is of the form $\struc{f(\mathbf{x})} = \sum_{\boldsymbol{\alp} \in A}^{} f_{\boldsymbol{\alp}}\mathbf{x}^{\boldsymbol{\alp}}$ with $f_{\boldsymbol{\alp}} \in \R$ and the \emph{monomial} $\mathbf{x}^{\boldsymbol{\alp}} = x_1^{\alp_1} \cdots x_n^{\alp_n}$ whose \emph{degree} is $\struc{|\boldsymbol{\alp}|}=\sum_{i=1}^{n} \alp_i$. The degree of the polynomial $f$ is given by the maximum degree over all appearing monomials, i.e., $\struc{\deg(f)} = \max \{|\boldsymbol{\alp}|: f_{\boldsymbol{\alp}}\neq 0\}$. 
We say that a polynomial is a \struc{\emph{sum of monomial squares}} if all terms $f_{\boldsymbol{\alp}}\mathbf{x}^{\boldsymbol{\alp}}$ satisfy $f_{\boldsymbol{\alp}} > 0$ and $\boldsymbol{\alp}$ is even. 
The set of all $n$-variate polynomials of degree less than or equal to $2d$ is denoted by $\struc{\R[\mathbf{x}]_{n,2d}}$. 
The \emph{Newton polytope} of a polynomial $f$ is defined as $\struc{\New(f)} \coloneqq \conv\{\boldsymbol{\alp} \in A : f_{\boldsymbol{\alp}} \neq 0\}$.   

A polynomial in which all terms are of the same degree is called a \emph{\struc{homogeneous polynomial}} or a \struc{\emph{form}}. If $f \in \R[\mathbf{x}]_{n,2d}$ is any polynomial, then
\begin{align*}
\struc{\ovl f (x_0,\ldots,x_n)}\;= \; x_0^{2d} \, f\lf(\frac{x_1}{x_0}, \ldots, \frac{x_n}{x_0}\ri)
\end{align*}
is the \emph{homogenization} of $f\!$, which is a form of degree $2d$ in the $n+1$ variables $x_0,x_1,\ldots,x_n$. Given a form $\ovl f $ we can \emph{dehomogenize} it by setting $x_0=1$. Since we are switching back and forth between both viewpoints,  for a better distinguishability, we fix the above notation and always write polynomials as $f \in \R[\mathbf{x}]_{n,2d}$ and forms as $\ovl f \in \R[x_0,\mathbf{x}]_{n+1,2d}$. 
Whereby the form $\ovl f$ can also be considered as a function on the real projective $n$-space $\struc{\P^n}$.

Finally, we define the \struc{\emph{zero-set}} of a polynomial $f$ and of a form $\ovl f$ respectively, by
\begin{align*}
\struc{\cV(f)}&\; \coloneqq\; \{(v_1,\ldots,v_n) \in \R^{n}: f(v_1,\ldots,v_n)=0 \}, \\
\struc{\cV(\overline{f})}&\; \coloneqq \; \lf\{[v_0:\cdots:v_n] \in \P^{n} : \overline{f}(v_0,\ldots,v_n)=0 \ri\}.
\end{align*}
We denote by $\struc{|\cV(\cdot)|}$ the number of distinct elements in the zero-set. 
The zero-set of a form may be viewed as the set   
\begin{align*}
\cV(\overline{f}) \; =\; \{(v_0,\ldots,v_n) \in \R^{n+1}\setminus \{0,\boldsymbol{0}\} : \overline{f}(v_0,\ldots,v_n)=0 \},
\end{align*}
where $|\cV(\overline{f})|$ will be interpreted as the number of lines in $\cV(\overline{f})$ and we only count one representative of each line.\\
In a natural way, there may occur \struc{\emph{zeros of $f$ at infinity}} via homogenization. This is the case if $v_0=0$ for $(v_0,\boldsymbol{v}) \in \cV(\overline{f})$. If $v_0 \neq 0$, then $(v_0,\boldsymbol{v})$~corresponds~to~a~unique~zero~of~$f$.

\subsection{Positive Polynomials and Sums of Squares}
\label{SubSec:Prelim_PSD-SOS}

In what follows let $\struc{P_{n,2d}}$ be the cone of nonnegative $n$-variate polynomials of degree at most $2d$ and let $\struc{\Sig_{n,2d}}$ denote the cone of $n$-variate sums of squares of degree at most $2d$. With $\struc{\ovl P_{n+1,2d}}$ and $\struc{\ovl\Sig_{n+1,2d}}$ we mean the cone of nonnegative forms and the cone of sums of squares of forms in $\R[x_0,\mathbf{x}]_{n+1,2d}$ respectively.

In his seminal paper Hilbert \cite{Hilbert:Seminal} classified all cases in which these two cones coincide:
\begin{thm}[Hilbert, 1888]
	Let $P_{n,2d}$ and $\Sigma_{n,2d}$ be as explained, then \\
	$P_{n,2d} = \Sigma_{n,2d}$ \quad if and only if \quad $n=1$ \;or\; $d=2$ \;or\; $(n,2d)=(2,4)$.	
	\label{Thm:HilbertConeCoincidePSOS}
\end{thm}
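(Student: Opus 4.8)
The plan is to prove the two implications separately; the inclusion $\Sigma_{n,2d}\subseteq P_{n,2d}$ is automatic since squares are nonnegative, so in the three exceptional cases it suffices to prove $P_{n,2d}\subseteq \Sigma_{n,2d}$. When $n=1$, a nonnegative $f\in\R[x]_{1,2d}$ has a real factorization $f=c\prod_i(x-r_i)^{2m_i}\prod_j q_j(x)$ with $c>0$ and each $q_j$ an irreducible, hence positive definite, quadratic; completing the square expresses each $q_j$ as a sum of two squares of polynomials, and iterating the identity $(a^2+b^2)(c^2+d^2)=(ac-bd)^2+(ad+bc)^2$ shows a product of such is again a sum of two squares, whence $f\in\Sigma_{1,2d}$. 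In the quadratic case one homogenizes and reduces to the statement that a positive semidefinite quadratic form is $\mathbf{y}^\top Q\mathbf{y}$ with $Q\succeq 0$; a congruence diagonalization $Q=L^\top D L$ with $D\succeq 0$ exhibits it as a sum of squares of linear forms, and dehomogenizing yields the claim for polynomials.

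The case of ternary quartics is the substantive one, and it is Hilbert's original result: every nonnegative ternary quartic form is a sum of (at most three) squares of quadratic forms. I would either invoke \cite{Hilbert:Seminal} directly or reproduce one of the standard arguments for it --- Hilbert's own, via the real geometry of the associated plane quartic curve, or a more modern linear-algebraic / degree-theoretic treatment. This is the point where the real obstacle lies: unlike the univariate case there is no elementary factorization available, and controlling when a nonnegative quartic fails to admit such a Gram representation requires genuinely more work. Once this is established, all three ``equality'' cases are complete, and a bivariate quartic polynomial is handled by homogenizing to a ternary quartic form.

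For the converse one must produce, in every remaining pair $(n,2d)$ --- that is $n\ge 2,\ 2d\ge 6$ and $n\ge 3,\ 2d=4$ --- a form in $P_{n,2d}$ that is not in $\Sigma_{n,2d}$. The strategy is first to record two standard propagation lemmas: if $\ovl f$ is a nonnegative non-SOS form in $n$ variables of degree $2d$, then $\ovl f$, viewed in one more variable, is still nonnegative and non-SOS (setting the new variable to zero in any hypothetical SOS representation would descend to one in the original variables), and multiplying $\ovl f$ by a square of a linear form preserves both nonnegativity and failure of the SOS property; hence it suffices to treat the two minimal bad cases $(n,2d)=(2,6)$ and $(n,2d)=(3,4)$. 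For these I would use the Motzkin form $x_1^2x_2^4+x_1^4x_2^2+x_0^6-3x_0^2x_1^2x_2^2$ and the Choi--Lam form $x_0^4+x_1^2x_2^2+x_2^2x_3^2+x_3^2x_1^2-4x_0x_1x_2x_3$, respectively.

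Nonnegativity of each example follows from the arithmetic--geometric mean inequality. The delicate part is showing neither is a sum of squares: assuming an SOS representation, the Newton polytope of $\ovl f$ constrains which monomials may occur in the square summands, and then one derives a contradiction --- either by evaluating at the common real zeros of $\ovl f$ (forcing the would-be square terms to vanish there in a way incompatible with the cross-terms of $\ovl f$), or by comparing the coefficients of the relevant mixed monomials against the positive semidefiniteness of the Gram matrix. I expect the two genuine obstacles to be (i) the ternary quartic equality case and (ii) these non-SOS verifications; everything else is bookkeeping over finitely many residual cases.
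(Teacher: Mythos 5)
The paper itself offers no proof of this statement: it is quoted as Hilbert's classical 1888 theorem with a citation to \cite{Hilbert:Seminal}, so there is no internal argument to measure yours against; what you have written is the standard modern reconstruction, and it is sound. The two easy equality cases are handled exactly as you say --- univariate factorization plus the two-squares identity, and diagonalization of a PSD quadratic form after homogenizing (note that the theorem's ``$d=2$'' must be read as degree $2d=2$, i.e.\ quadratics, which is how you read it; quartics in three or more variables do not have the property). The reduction of bivariate quartics to the ternary-quartic form statement is fine, but that statement is the genuine content of the equality direction, and you, like the paper, ultimately delegate it to \cite{Hilbert:Seminal} or a standard exposition rather than proving it. For the converse, your propagation lemmas are correct and worth making explicit: restricting a hypothetical SOS representation to the hyperplane where the added variable vanishes handles extra variables, and if $x_1^2 f=\sum_i q_i^2$ then each $q_i$ vanishes on $x_1=0$, hence is divisible by $x_1$, so $f$ itself is SOS --- this one-line divisibility argument is the step your sketch leaves implicit. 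With the minimal cases $(2,6)$ and $(3,4)$ witnessed by the Motzkin and Choi--Lam forms (nonnegativity by AM--GM, failure of SOS by the Newton-polytope/Gram-matrix or zero-evaluation argument you indicate), the remaining pairs $n\geq 2$, $2d\geq 6$ and $n\geq 3$, $2d=4$ are indeed all covered. In short, the proposal is a correct proof outline; its only unproven ingredients --- Hilbert's ternary-quartic theorem and the two non-SOS verifications --- are classical, and the paper likewise leaves them to the literature.
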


The investigation of the real zeros of $\ovl P_{n+1,2d}$ and $\ovl \Sig_{n+1,2d}$ is part of active research in convex algebraic geometry. Choi, Lam, and Reznick study in \cite{CLR:realzeros} the real zeros of nonnegative forms and provide various consequential results. 
%
%
Therein, the authors also define certain numbers given by the maximal number of real zeros of forms in the SOS and the nonnegativity cone:
\[ \struc{B_{n+1,2d}}\; \coloneqq\; \sup_{\substack{\ovl f\in \ovl P_{n+1,2d}\\|\cV(\ovl f)| < \infty}} |\cV(\ovl f)|\qquad\textrm{and}\qquad \struc{B_{n+1,2d}'} \;\coloneqq\; \sup_{\substack{\ovl f\in \ovl\Sig_{n+1,2d}\\|\cV(\ovl f)| < \infty}} |\cV(\ovl f)|. \]
Geometrically, $B_{n+1,2d}$ corresponds to the largest size of a finite real projective hypersurface of degree $2d$ in $\P^n$.
To determine these numbers exactly is a rather difficult task. In what follows, we list some known results in a few special cases. 
\begin{thm}[\cite{Blekherman:et:al:Boundaries, CLR:realzeros, Shafarevich:BAG}]
	\label{Thm:B-numbersSOS&PSD}
	Let $B_{n+1,2d}$ and $B_{n+1,2d}'$ be as defined above, then:
	\renewcommand{\labelenumi}{\textnormal{(\arabic{enumi})}}
	\begin{enumerate}
		\item $B_{2,2d} = B'_{2,2d} = d$ and $B_{n+1,2}=B'_{n+1,2}=1$.
		\item $B_{3,4}=4$ and $B_{3,6} = B_{4,4} = 10$.
		\item  $d^2 \leq B_{3,2d} \leq \frac{3d(d - 1)}{2} + 1$ for $2d \geq 6$, and
		\item $B_{3,6k} \geq 10k^2$, $B_{3,6k+2} \geq 10k^2 + 1$, $B_{3,6k+4} \geq 10k^2 + 4$.
		\item Let $\beta(2d) = \frac{B_{3,2d}}{4d^2}$. Then $\beta=\lim_{2d \to \infty}  \beta(2d)$ exists. Moreover, $\beta(2d)\leq \beta$ for all $\frac{5}{18}\leq \beta \leq \frac{1}{2}$.
		\item $B'_{n+1,2d} \geq d^{n}$.
		\item $B_{3,2d}' = \frac{(2d)^2}{4}=d^2$.
	\end{enumerate}
\end{thm}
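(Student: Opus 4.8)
The statement is a compilation of results from the literature, so the strategy I would follow is to prove the elementary items directly and to reduce the remaining ones to known theorems on real algebraic plane curves, indicating what input each part needs.

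\emph{The elementary items \textnormal{(1)}, \textnormal{(6)}, \textnormal{(7)}.} For binary forms, a nonnegative $\ovl f\in\ovl P_{2,2d}$ vanishing at $[a:b]$ must contain $(bx_0-ax_1)$ as a factor of \emph{even} multiplicity, since a real root of a nonnegative univariate polynomial has even multiplicity; summing multiplicities of distinct real zeros gives at most $d$ of them, and $\prod_{i=1}^{d}(x_0-ix_1)^2$ attains this, so $B_{2,2d}=d$; since $\ovl P_{2,2d}=\ovl\Sig_{2,2d}$ by Theorem~\ref{Thm:HilbertConeCoincidePSOS}, the same value is $B'_{2,2d}$. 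For $2d=2$ and $n+1\ge 3$, a PSD quadratic form is $\mathbf{x}^{\!\top}M\mathbf{x}$ with $M\succeq 0$, whose projective zero set is $\P(\ker M)$; finiteness forces $\dim\ker M\le 1$, hence at most one zero, attained by $x_1^2+\dots+x_n^2$, and again PSD equals SOS in degree two. For (6), pick distinct reals $a_1,\dots,a_d$, set $p(t)=\prod_{j=1}^d(t-a_j)$, and homogenize $\sum_{i=1}^n p(x_i)^2$: this yields $\ovl f=\sum_{i=1}^n\big(\prod_{j=1}^d(x_i-a_jx_0)\big)^2\in\ovl\Sig_{n+1,2d}$ whose zero set is exactly the grid $\{a_1,\dots,a_d\}^n$ (one checks there are no zeros at infinity), of cardinality $d^n$, so $B'_{n+1,2d}\ge d^n$. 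For (7), the lower bound is (6) with $n=2$; for the upper bound, use the ternary case of Theorem~\ref{Thm:HilbertConeCoincidePSOS}, by which every $\ovl f\in\ovl\Sig_{3,2d}$ is a sum of squares of ternary forms of degree $d$, so $\cV(\ovl f)$ is an intersection of plane curves of degree $d$; if finite, two of these curves share no common component and Bézout bounds the intersection by $d^2$, giving $B'_{3,2d}=d^2$.

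\emph{The deep items \textnormal{(2)}--\textnormal{(5)}.} The governing principle is that a real zero $p$ of a PSD form is a point where the form attains a local minimum value $0$, so its gradient vanishes there; by Euler's relation $\cV(\ovl f)$ equals the common zero set of $\ovl f_{x_0},\ovl f_{x_1},\ovl f_{x_2}$, and every such $p$ is a singular point of $\{\ovl f=0\}$ of multiplicity at least two. Then $B_{3,4}=4$ is immediate as in (7), since PSD equals SOS for ternary quartics (indeed $B_{3,4}=B'_{3,4}=4$). The bound $B_{3,2d}\le\tfrac{3d(d-1)}{2}+1$ in (3) is obtained in \cite{CLR:realzeros} by a refined intersection count of $\ovl f$ against a generic polar curve $D_v\ovl f$ of degree $2d-1$ — each real zero contributes intersection multiplicity at least $2$ — combined with the fact that, for a PSD form, the components of $\{\ovl f=0\}$ carrying real points occur with even multiplicity, which sharpens the naive Bézout count; for $d=3$ this gives the upper bound $10$ in (2), with a matching construction, and $B_{4,4}=10$ is the analogous statement for quaternary quartics. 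The lower estimates in (4) come from a pullback construction: if $\ovl g$ is a PSD ternary sextic with $10$ zeros and $p_1,p_2,p_3$ are generic ternary forms of degree $k$, then $\ovl g(p_1,p_2,p_3)$ is a PSD ternary form of degree $6k$, and each of the $10$ zeros pulls back along the degree-$k^2$ map to $k^2$ real points, yielding $\ge 10k^2$ real zeros (with small modifications in the $6k+2$ and $6k+4$ cases). Finally, the existence of $\beta=\lim_{2d\to\infty}B_{3,2d}/(4d^2)$ in (5) is a Fekete-type argument: the pullback construction makes $d\mapsto B_{3,2d}/d^2$ essentially super-multiplicative, so the normalized limit exists, and the numerical estimates for $\beta$ follow by combining (3) and (4).

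\emph{The main obstacle.} Everything past the elementary cases rests on nontrivial facts about the topology and intersection theory of real algebraic plane curves (and quartic threefolds); this is precisely why only these fragments are known and why $B_{n+1,2d}$ and $B'_{n+1,2d}$ are open in general. Reproving items (2)--(5) is outside the scope here, so the argument is an assembly of the direct computations above together with the precise citations to \cite{Blekherman:et:al:Boundaries,CLR:realzeros,Shafarevich:BAG}.
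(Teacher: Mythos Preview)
The paper does not prove this theorem at all; it is stated purely as a compilation of cited results. Your strategy of proving the elementary items directly and deferring (2)--(5) to the literature is therefore exactly the right one, and your treatment of (1), (6), and the lower bound in (7) is clean and correct.

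Two points on item (7), however. First, the appeal to Hilbert's theorem is misplaced: you write that ``by the ternary case of Theorem~\ref{Thm:HilbertConeCoincidePSOS}, every $\ovl f\in\ovl\Sig_{3,2d}$ is a sum of squares of ternary forms of degree $d$'', but this is simply the \emph{definition} of $\ovl\Sig_{3,2d}$; Hilbert's theorem concerns the equality $\ovl P = \ovl\Sig$ and plays no role here. Second, and more substantively, your claim that ``if finite, two of these curves share no common component'' is false as stated. Take $h_i = (x_0^2+x_1^2)\,\ell_i$ for three independent linear forms $\ell_i$; then every pair shares the irreducible factor $x_0^2+x_1^2$, yet $\ovl f=(x_0^2+x_1^2)^2\sum\ell_i^2$ has the single real zero $[0{:}0{:}1]$. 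The fix is to factor out the common real divisor $g$ and induct on the degree: writing $\ovl f=g^2\tilde f$ with $\tilde f\in\ovl\Sig_{3,2(d-\deg g)}$, one bounds the isolated real zeros of $g$ (all singular on the complex curve, hence controlled by the genus/Bézout bound) and applies the inductive hypothesis to $\tilde f$. This is standard and is what the cited references do, but the step you wrote does not go through as is.

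Your sketches for (2)--(5) are reasonable pointers to the literature; since the paper itself offers nothing beyond citations here, there is nothing further to compare.
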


Theorem~\ref{Thm:B-numbersSOS&PSD} shows that $B_{3,2d}$ is always finite. For quartic forms we have $B_{4,4}=10$, but already for $n+1\geq 5$, we do not know if $B_{n+1,4}$ needs to be finite in general.

\subsection{Sums of Nonnegative Circuit Polynomials}
\label{SubSec:Prelim_SONC}

In this subsection we introduce SONC polynomials and their basic properties, which are used in this work. SONC polynomials are composed of \textit{circuit polynomials}, see \cite{Iliman:deWolff:Circuits}. 
\begin{definition}
	Let $f \in \R[\mathbf{x}]$ be supported on $A \subset \N^n$. Then $f$ is called a \struc{\emph{circuit polynomial}} if it is of the form
	\begin{eqnarray}
	\struc{f(\mathbf{x})} & = & \sum_{j=0}^r f_{\boldsymbol{\alp}(j)} \mathbf{x}^{\boldsymbol{\alp}(j)} + f_{\boldsymbol{\beta}} \mathbf{x}^{\boldsymbol{\beta}}, \label{Equ:CircuitPolynomial}
	\end{eqnarray}
	with $\struc{r} \leq n$, exponents $\struc{\boldsymbol{\alp}(j)}$, $\struc{\boldsymbol{\beta}} \in A$, and coefficients $\struc{f_{\boldsymbol{\alp}(j)}} \in \R_{> 0}$, $\struc{f_{\boldsymbol{\beta}}} \in \R$, such that the following conditions hold:
	
	\begin{description}
		\item[(C1)] $\New(f)$ is a simplex with even vertices $\boldsymbol{\alp}(0), \boldsymbol{\alp}(1),\ldots,\boldsymbol{\alp}(r)$. 
		\item[(C2)] 
		The exponent $\boldsymbol{\beta}$ can be written uniquely as 
		\begin{eqnarray*}
			& & \boldsymbol{\beta} \ = \ \sum_{j=0}^r \lambda_j \boldsymbol{\alp}(j) \ \text{ with } \ \lambda_j \ > \ 0 \ \text{ and } \  \sum_{j=0}^r \lambda_j \ = \ 1
		\end{eqnarray*}
		in \struc{\emph{barycentric coordinates} $\lambda_j$} relative to the vertices $\boldsymbol{\alp}(j)$ with $j=0,\ldots,r$.
	\end{description}
	We call the terms $f_{\boldsymbol{\alp}(0)} \mathbf{x}^{\boldsymbol{\alp}(0)},\ldots,f_{\boldsymbol{\alp}(r)} \mathbf{x}^{\boldsymbol{\alp}(r)}$ the \struc{\emph{outer terms}} and $f_{\boldsymbol{\beta}} \mathbf{x}^{\boldsymbol{\beta}}$ the \struc{\emph{inner term}} of $f$. For the corresponding exponents we refer to the \struc{\emph{outer exponents}} and the \struc{\emph{inner exponent}} of $f$ respectively.
	
	Every circuit polynomial determines a certain invariant, the \struc{\textit{circuit number}}, via
{
	\setlength{\belowdisplayskip}{-0.2cm}
	\begin{eqnarray}
	\struc{\Theta_f} \ = \ \prod_{j = 0}^r \left(\frac{f_{\boldsymbol{\alp}(j)}}{\lambda_j}\right)^{\lambda_j}. \label{Equ:DefCircuitNumber}
	\end{eqnarray}
}
	\label{Def:CircuitPolynomial}
	\endexa
\end{definition}

Observe that Condition (C2) implies, that the inner exponent $\boldsymbol{\beta}$ is in the strict interior of $\New(f)$ if $\dim(\New(f))\geq 1$. For $r=0$ we have $\boldsymbol{\beta}=\boldsymbol{\alp}(0)$, i.e., in this case $f$ is a monomial square.  \\

The terms ``circuit polynomial'' and ``circuit number'' are chosen since the support $A=\{\boldsymbol{\alp}(0), \boldsymbol{\alp}(1),\ldots,\boldsymbol{\alp}(r),\boldsymbol{\beta}\}$ forms a \struc{\textit{circuit}}, this is a minimally affine dependent set, see e.g. \cite{Gelfand:Kapranov:Zelevinsky}.

A fundamental fact is that nonnegativity of a circuit polynomial $f$ can be decided easily by its circuit number $\Theta_f$ alone.

\begin{thm}[\cite{Iliman:deWolff:Circuits}, Theorem 3.8]
	Let $f$  be a  circuit polynomial as in Definition~$\ref{Def:CircuitPolynomial}$. Then the following are equivalent:
	\begin{enumerate}
		\item $f$ is nonnegative.
		\item $|f_{\boldsymbol{\beta}}| \leq \Theta_f$ and $\boldsymbol{\beta} \not \in (2\N)^n$ \quad or \quad $f_{\boldsymbol{\beta}} \geq -\Theta_f$ and $\boldsymbol{\beta }\in (2\N)^n$.
	\end{enumerate}
	\label{Thm:CircuitPolynomialNonnegativity}
\end{thm}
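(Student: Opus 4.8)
\textbf{Proof proposal for Theorem~\ref{Thm:CircuitPolynomialNonnegativity}.}

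The plan is to reduce the nonnegativity question on $\R^n$ (or rather on the relevant orthant, where the monomial $\mathbf{x}^{\boldsymbol{\beta}}$ determines the sign behaviour) to a weighted arithmetic--geometric mean inequality, and then to optimize over the free parameter coming from the inner term. First I would observe that it suffices to test $f$ on the positive orthant $\R_{>0}^n$: the outer terms $f_{\boldsymbol{\alp}(j)}\mathbf{x}^{\boldsymbol{\alp}(j)}$ are monomial squares by (C1) and hence depend only on $|\mathbf{x}|$, while the inner monomial $\mathbf{x}^{\boldsymbol{\beta}}$ takes its most negative value (for fixed $|x_1|,\ldots,|x_n|$) equal to $-|\mathbf{x}^{\boldsymbol{\beta}}|$ when $\boldsymbol{\beta}\notin(2\N)^n$, and is forced to be $\geq 0$ when $\boldsymbol{\beta}\in(2\N)^n$; this is exactly the dichotomy in statement (2). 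So the worst case reduces to deciding whether $\sum_{j=0}^r f_{\boldsymbol{\alp}(j)} \mathbf{y}^{\boldsymbol{\alp}(j)} - |f_{\boldsymbol{\beta}}|\,\mathbf{y}^{\boldsymbol{\beta}} \geq 0$ for all $\mathbf{y}\in\R_{>0}^n$ (and symmetrically $+f_{\boldsymbol{\beta}}\mathbf{y}^{\boldsymbol{\beta}}\geq 0$ is automatic in the even case once $f_{\boldsymbol{\beta}}\geq -\Theta_f$ is compatible, but the binding constraint is again the $-\Theta_f$ bound).

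Next, the key step: for $\mathbf{y}\in\R_{>0}^n$ the weighted AM--GM inequality applied with weights $\lambda_0,\ldots,\lambda_r$ (which sum to $1$ by (C2)) gives
\begin{align*}
\sum_{j=0}^r \lambda_j \cdot \frac{f_{\boldsymbol{\alp}(j)}}{\lambda_j}\,\mathbf{y}^{\boldsymbol{\alp}(j)} \;\geq\; \prod_{j=0}^r \left(\frac{f_{\boldsymbol{\alp}(j)}}{\lambda_j}\,\mathbf{y}^{\boldsymbol{\alp}(j)}\right)^{\lambda_j} \;=\; \Theta_f \cdot \mathbf{y}^{\sum_j \lambda_j \boldsymbol{\alp}(j)} \;=\; \Theta_f\,\mathbf{y}^{\boldsymbol{\beta}},
\end{align*}
using the barycentric identity $\boldsymbol{\beta}=\sum_j\lambda_j\boldsymbol{\alp}(j)$ and the definition \eqref{Equ:DefCircuitNumber} of the circuit number. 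Since the left-hand side equals $\sum_j f_{\boldsymbol{\alp}(j)}\mathbf{y}^{\boldsymbol{\alp}(j)}$, this shows $\sum_j f_{\boldsymbol{\alp}(j)}\mathbf{y}^{\boldsymbol{\alp}(j)} \geq \Theta_f\,\mathbf{y}^{\boldsymbol{\beta}}\geq |f_{\boldsymbol{\beta}}|\,\mathbf{y}^{\boldsymbol{\beta}}$ whenever $|f_{\boldsymbol{\beta}}|\leq\Theta_f$, which together with the first paragraph proves the implication (2)$\Rightarrow$(1). For the case $r=0$ one checks directly that $f$ is a monomial square, hence trivially nonnegative, consistent with $\Theta_f=f_{\boldsymbol{\alp}(0)}$.

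For the converse (1)$\Rightarrow$(2), I would show the AM--GM bound above is \emph{tight}: equality in weighted AM--GM holds precisely when all the terms $\frac{f_{\boldsymbol{\alp}(j)}}{\lambda_j}\mathbf{y}^{\boldsymbol{\alp}(j)}$ are equal, and because the exponents $\boldsymbol{\alp}(0),\ldots,\boldsymbol{\alp}(r)$ are affinely independent (they are the vertices of a simplex by (C1)), the system of equations defining this locus has a solution $\mathbf{y}^*\in\R_{>0}^n$ — one solves the linear system in $\log y_i$. At such a point $f(\mathbf{y}^*)$ (with the sign of the inner term chosen adversarially, which is possible exactly in the non-even case, or forced in the even case) equals $(\Theta_f - |f_{\boldsymbol{\beta}}|)\,(\mathbf{y}^*)^{\boldsymbol{\beta}}$ resp.\ $(\Theta_f + f_{\boldsymbol{\beta}})(\mathbf{y}^*)^{\boldsymbol{\beta}}$; nonnegativity of $f$ at this point forces the stated inequality on $f_{\boldsymbol{\beta}}$. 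The main obstacle I anticipate is the bookkeeping around signs and orthants: making precise that "choosing signs of the $x_i$ adversarially" actually realizes $-|f_{\boldsymbol{\beta}}|\,\mathbf{y}^{\boldsymbol{\beta}}$ exactly when $\boldsymbol{\beta}\notin(2\N)^n$, and that when $\boldsymbol{\beta}\in(2\N)^n$ the inner monomial is a nonnegative quantity so only the lower bound $f_{\boldsymbol{\beta}}\geq-\Theta_f$ can be violated — this requires a small argument that for any prescribed sign vector on a nonzero coordinate one can hit the extreme value of $\mathbf{x}^{\boldsymbol{\beta}}$, which uses that $\boldsymbol{\alp}(j)$ even lets the outer part be unaffected.
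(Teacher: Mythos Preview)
The paper does not actually prove this theorem: it is quoted from \cite{Iliman:deWolff:Circuits} (their Theorem~3.8) and used as a black box, so there is no ``paper's own proof'' to compare against here. That said, your proposal is correct and is precisely the argument given in the original Iliman--de Wolff paper: reduce to the positive orthant using that the outer exponents are even, apply the weighted AM--GM inequality with the barycentric weights $\lambda_j$ to obtain $\sum_j f_{\boldsymbol{\alp}(j)}\mathbf{y}^{\boldsymbol{\alp}(j)}\geq \Theta_f\,\mathbf{y}^{\boldsymbol{\beta}}$, and for the converse exploit affine independence of the $\boldsymbol{\alp}(j)$ to solve (in logarithmic coordinates) for a point where AM--GM is tight. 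Your handling of the sign dichotomy between $\boldsymbol{\beta}\in(2\N)^n$ and $\boldsymbol{\beta}\notin(2\N)^n$ is also the standard one and is fine; the ``obstacle'' you flag is only bookkeeping, since flipping the sign of any $x_i$ with $\beta_i$ odd flips the sign of $\mathbf{x}^{\boldsymbol{\beta}}$ while leaving all outer terms unchanged.
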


An immediate consequence that can be drawn by the proof of the above theorem is an upper bound for the number of zeros of circuit polynomials and a condition for a circuit polynomial to lie on the boundary of the cone of nonnegative polynomials, but we  postpone this discussion to Section~\ref{Sec:RealZeros}.

Writing a polynomial as a sum of nonnegative circuit polynomials is a certificate of nonnegativity. We denote by \struc{SONC} both the class of polynomials that are \struc{\it sums of nonnegative circuit polynomials} and the property of a polynomial to be in this class.  

\begin{definition}
For every $n,d \in \N^*$ the set of \struc{\emph{sums of nonnegative circuit polynomials} (SONC)} in $n$ variables of degree $2d$ is defined as
	$$\struc{C_{n,2d}} \ := \ \left\{p \in \R[\mathbf{x}]_{n,2d} \ :\  p = \sum_{i=1}^k \mu_i f_i, \begin{array}{c}
	f_i \text{ is a nonnegative circuit polynomial, } \\
	\mu_i \geq 0, k \in \N^* \\
	\end{array}
	\right\}.
	$$
	\label{Def:SONC}
	\endexa
\end{definition}

Indeed, SONC polynomials form a convex cone independent of the SOS cone.

\begin{thm}[\cite{Iliman:deWolff:Circuits}, Proposition 7.2]
	$C_{n,2d}$ is a convex cone satisfying:
	\begin{enumerate}
		\item $C_{n,2d} \subseteq P_{n,2d}$ for all $n,d \in \N^*$,
		\item $C_{n,2d} \subseteq \Sigma_{n,2d}$ if and only if $(n,2d)\in\{(1,2d),(n,2),(2,4)\}$,
		\item  $\Sigma_{n,2d} \not\subseteq C_{n,2d}$ for all $(n,2d)$ with $2d \geq 6$.
	\end{enumerate}
	\label{Thm:ConeContainment}
\end{thm}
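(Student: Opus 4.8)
\emph{Overview.} The plan is to check the cone property and (1) directly, deduce the ``if'' part of (2) from Hilbert's Theorem~\ref{Thm:HilbertConeCoincidePSOS}, and prove the ``only if'' part of (2) together with (3) by producing explicit separating polynomials.

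\emph{Cone property, (1), and the ``if'' part of (2).} If $p=\sum_i\mu_if_i$ with $\mu_i\ge 0$ and the $f_i$ nonnegative circuit polynomials in $\R[\mathbf x]_{n,2d}$, then $\mu p=\sum_i(\mu\mu_i)f_i$ for $\mu\ge 0$, and the sum of two such expressions has the same shape; hence $C_{n,2d}$ is closed under nonnegative scaling and addition, so it is a convex cone in $\R[\mathbf x]_{n,2d}$. A nonnegative circuit polynomial is nonnegative by definition (this is exactly what Definition~\ref{Def:SONC}, via the criterion of Theorem~\ref{Thm:CircuitPolynomialNonnegativity}, admits), so every element of $C_{n,2d}$ is a nonnegative polynomial of degree $\le 2d$, i.e.\ $C_{n,2d}\subseteq P_{n,2d}$. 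If $(n,2d)\in\{(1,2d),(n,2),(2,4)\}$ then $P_{n,2d}=\Sigma_{n,2d}$ by Theorem~\ref{Thm:HilbertConeCoincidePSOS}, so $C_{n,2d}\subseteq P_{n,2d}=\Sigma_{n,2d}$.

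\emph{``Only if'' part of (2).} Here and in (3) I would use the standard fact that, in a representation of a nonnegative polynomial as a finite nonnegative sum of nonnegative polynomials, each summand's Newton polytope lies inside the Newton polytope of the sum; in particular $\New(h_j)\subseteq\tfrac12\New(\sum_jh_j^2)$ (this should be recorded as a lemma, since it is not among the results quoted above). If $(n,2d)$ is not in Hilbert's list then $n\ge 2$, $2d\ge 4$ and $(n,2d)\ne(2,4)$, so either $n\ge2,\,2d\ge 6$, or $n\ge 3,\,2d=4$. In the first range take the Motzkin polynomial $M=x_1^4x_2^2+x_1^2x_2^4+1-3x_1^2x_2^2$: its support is a circuit with circuit number $3$, so $M$ is a nonnegative circuit polynomial by Theorem~\ref{Thm:CircuitPolynomialNonnegativity}, while $\tfrac12\New(M)=\conv\{(0,0),(2,1),(1,2)\}$ has lattice points only $(0,0),(1,1),(2,1),(1,2)$, which forces every $h_j$ to be supported on these four points; then the $x_1^2x_2^2$-coefficient of $\sum_jh_j^2$ is a sum of real squares, contradicting $-3$, so $M$ is not SOS. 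Regarding $M$ as an element of $\R[x_1,\dots,x_n]$ and multiplying by the monomial square $x_1^{\,2d-6}$ if $2d>6$ keeps it a nonnegative circuit polynomial that is not SOS (the Newton-polytope constraint confines a square representation to $x_1,x_2$, resp.\ makes every $h_j$ divisible by $x_1^{\,d-3}$). In the range $n\ge 3,\,2d=4$ take instead $Q=1+x_1^2x_2^2+x_1^2x_3^2+x_2^2x_3^2-4x_1x_2x_3$ (a Choi--Lam type example, the dehomogenization of $w^4+x^2y^2+x^2z^2+y^2z^2-4xyzw$): its support is a circuit with circuit number $4$, so $Q$ is a nonnegative circuit polynomial, while $\tfrac12\New(Q)=\conv\{(0,0,0),(1,1,0),(1,0,1),(0,1,1)\}$ has only its four vertices as lattice points, so each $h_j=a_j+b_jx_1x_2+c_jx_1x_3+d_jx_2x_3$, and no product of two of $1,x_1x_2,x_1x_3,x_2x_3$ has exponent $(1,1,1)$, so $\sum_jh_j^2$ has a vanishing $x_1x_2x_3$-coefficient, contradicting $-4$. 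Thus $C_{n,2d}\not\subseteq\Sigma_{n,2d}$ in every non-Hilbert case, which is the contrapositive.

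\emph{Statement (3).} Fix $2d\ge 6$ and put $q=\bigl(x_1^{\,d-2}(x_1-1)(x_1-2)\bigr)^2=x_1^{\,2d-4}(x_1-1)^2(x_1-2)^2\in\Sigma_{1,2d}\subseteq\Sigma_{n,2d}$. Suppose $q=\sum_i\mu_if_i$ in $C_{n,2d}$. Since $\New(q)$ is the segment $\conv\{(2d-4)\mathbf e_1,\,2d\,\mathbf e_1\}$, each $\New(f_i)$ lies in it, so each $f_i\in\R[x_1]$ is divisible by $x_1^{\,2d-4}$, and $g_i:=f_i/x_1^{\,2d-4}$ is again a nonnegative circuit polynomial (translation by the even vector $(2d-4)\mathbf e_1$ preserves the simplex, the barycentric data, the circuit number, and the parity of the inner exponent) supported on $\{0,1,2,3,4\}$; hence $(x_1-1)^2(x_1-2)^2=\sum_i\mu_ig_i$ would be a univariate SONC polynomial. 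But $(x_1-1)^2(x_1-2)^2=x_1^4-6x_1^3+13x_1^2-12x_1+4$ is not: the only circuit polynomials supported on $\{0,1,2,3,4\}$ are the monomial squares $1,x_1^2,x_1^4$ and the segment circuits on a pair from $\{0,2,4\}$ with their single interior term, so the coefficient $-6$ at $x_1^3$ must be furnished by $[2,4]$- or $[0,4]$-circuits and $-12$ at $x_1$ by $[0,2]$- or $[0,4]$-circuits, and matching these against the vertex coefficients $1$ at $x_1^4$ and $4$ at $x_1^0$ through the inequalities of Theorem~\ref{Thm:CircuitPolynomialNonnegativity} is infeasible --- a short case distinction shows the vertex budgets $1$ and $4$ cannot simultaneously support circuit numbers large enough to absorb $-6$ and $-12$. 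Hence $q\notin C_{n,2d}$, so $\Sigma_{n,2d}\not\subseteq C_{n,2d}$. For $n\ge 2$ one may instead take $x_1^{\,2d-4}(1-x_1)^2(1-x_2)^2$ and reduce to $(1-x_1)^2(1-x_2)^2\notin C_{2,4}$, where the argument is especially transparent: the only circuits available have Newton polytope a corner, an edge, or a diagonal of the square $[0,2]^2$, absorbing the $-2$ at an edge midpoint forces that edge's two circuits to carry weight $1$ at each (corner) endpoint, and a corner shared by two edges would then need total weight $2$, exceeding its coefficient $1$.

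\emph{Main obstacle.} The cone property, (1), the ``if'' half of (2), and the two Newton-polytope reductions are routine; for the ``only if'' half of (2) the non-SOS facts are essentially classical (Motzkin; a Choi--Lam quaternary quartic), so the content there is mainly that these examples can be taken to be circuit polynomials and propagated to all relevant $(n,2d)$. The real work is the verification in (3) that $(x_1-1)^2(x_1-2)^2$ --- respectively $(1-x_1)^2(1-x_2)^2$ --- is not SONC: because the circuit-number conditions are geometric-mean inequalities rather than linear constraints, this is a small but genuinely nonlinear feasibility question, and the point to get right is that the case distinction over the admissible circuit supports is exhaustive.
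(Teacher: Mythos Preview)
The paper does not give a proof of this theorem; it is simply quoted from \cite{Iliman:deWolff:Circuits}. Your reconstruction is along the expected lines, and the cone property, (1), the ``if'' direction of (2), and the ``only if'' direction of (2) via the Motzkin and Choi--Lam circuit polynomials are all correct. The Newton-polytope containment lemma you invoke (each nonnegative summand has Newton polytope inside that of the sum) is indeed a standard fact and does what you need for both the SOS and SONC reductions.

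The one genuine soft spot is your main argument for (3). You assert that for $(x_1-1)^2(x_1-2)^2=x_1^4-6x_1^3+13x_1^2-12x_1+4$ ``a short case distinction shows the vertex budgets $1$ and $4$ cannot simultaneously support circuit numbers large enough to absorb $-6$ and $-12$.'' This is not short: besides the $[0,2]$- and $[2,4]$-edge circuits you also have three $[0,4]$-circuits (inner exponent $1,2,3$), and the one with even inner exponent $2$ may carry a \emph{negative} inner coefficient, which feeds back into the $x_1^2$-budget of $13$ and loosens the constraint on $a_{24}+b_{02}$. The resulting nonlinear system can be shown infeasible, but not in one line, and you have not actually done it. Your alternative bivariate argument via $(1-x_1)^2(1-x_2)^2$ is clean and correct, but it does not cover $n=1$.

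There is a much simpler route, and it is exactly the idea this paper uses when proving the companion Theorem~\ref{Thm:MissingPieceConeContainment}: argue via zeros rather than coefficient budgets. If $(x_1-1)^2(x_1-2)^2=\sum_i\mu_if_i$ with $\mu_i>0$ and each $f_i$ a nonnegative circuit polynomial, then evaluation at $x_1=1$ and $x_1=2$ forces $f_i(1)=f_i(2)=0$ for every $i$. But the real zeros in $\R^*$ of a nonzero univariate nonnegative circuit polynomial are of the form $\pm v$ (this is Corollary~\ref{Cor:NumberAffineZeroesModified}, or just the equality case of AM--GM), and a monomial square never vanishes at $1$; so no summand can vanish at both $1$ and $2$. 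This replaces your feasibility analysis entirely. The paper's own choice in Theorem~\ref{Thm:MissingPieceConeContainment} is even slicker: $(1+x_1)^{2d}$ has a zero of order $2d\ge 4$ at $x_1=-1$, while any nonzero univariate nonnegative circuit polynomial has strictly positive second derivative at a zero, so SONC polynomials have only second-order zeros.
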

\hspace*{-0.25cm} For further details about the SONC cone see, e.g.,  \cite{Iliman:deWolff:Circuits,Dressler:Iliman:deWolff,Dressler:Iliman:deWolff:REP,Dressler:Kurpisz:deWolff:SONCHypercube}. In recent works, amongst others, the dual of the SONC cone, its algebraic boundary, and extreme rays are studied, see \cite{Dressler:Naumann:Theobald:SONCdual,Katthaen:Naumann:Theobald:UnifiedFramework, Forsgard:deWolff:SONC-Boundary}. The SONC cone closely relates to the `SAGE' cone established by Chandrasekaran and Shah in \cite{Chandrasekaran:Shah:SAGE} and further studied and generalized by Murray, Chandrasekaran, and Wierman \cite{Murray:Chandrasekaran:Wiermann:Newton2018,Murray:Chandrasekaran:Wiermann:SigPolyOpt2019}. We refer readers who are interested in the relation of these cones to the discussions in \cite[Section 2.1]{Dressler:Heuer:Naumann:deWolff:SONC_LP} and \cite[Introduction]{Murray:Naumann:Theobald:2020xcircuits}.

\medskip

\section{Deeper Analysis of the SONC cone}
\label{Sec:AnalysisSONC}

In this section we present some important properties and observations of the SONC cone. First, we provide the missing case of the statement about the \mbox{(non-)}containment of the SONC cone and the SOS cone. Afterward, we consider the realizability of nonnegative circuit polynomials with a certain degree. We conclude this section by proving that (de-) homogenizing a SONC polynomial stays SONC.

\hspace*{-0.23cm}The subsequent statement covers the missing case of Theorem~\ref{Thm:ConeContainment} (3), where \mbox{$\Sigma_{n,2d} \not\subseteq C_{n,2d}$} is only shown for degree $2d \geq 6$, answering a question stated in \cite{Iliman:deWolff:Circuits}. Thus, we get the following result, whereby we actually prove the full statement rather than only the missing cases.   

\begin{thm}
	$\Sigma_{1,2} = C_{1,2}$, $\Sigma_{n,2} \not\subseteq C_{n,2}$ for all $n\geq 2$, and $\Sigma_{n,2d} \not\subseteq C_{n,2d}$ for all $(n, 2d)$ with $2d \geq 4$.
	\label{Thm:MissingPieceConeContainment}	
\end{thm}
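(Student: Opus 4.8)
The plan is to prove the three claims separately, handling the easy equality first and then the two non-containment statements. For $\Sigma_{1,2}=C_{1,2}$: a univariate quadratic that is a sum of squares is either $a(x-b)^2$ or $ax^2+bx+c$ with $a>0$ and discriminant $\le 0$; in either case, after completing the square, it is (up to the monomial-square summands) a nonnegative circuit polynomial supported on $\{0,1,2\}$ with outer exponents $0,2$ and inner exponent $1$, so it lies in $C_{1,2}$. Conversely $C_{1,2}\subseteq P_{1,2}=\Sigma_{1,2}$ by Theorem~\ref{Thm:HilbertConeCoincidePSOS} (or by Theorem~\ref{Thm:ConeContainment}(2)). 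So this case reduces to an elementary computation.

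For $\Sigma_{n,2}\not\subseteq C_{n,2}$ with $n\ge 2$: I would exhibit a concrete PSD quadratic form whose mixed terms cannot be absorbed into any SONC decomposition. The natural candidate is something like $f=(x_1-x_2)^2=x_1^2-2x_1x_2+x_2^2$ (viewed in $n$ variables): it is a single square, hence in $\Sigma_{n,2}$. To see $f\notin C_{n,2}$, note that any circuit polynomial appearing in a SONC decomposition of $f$ must be supported on $\New(f)\subseteq\conv\{\mathbf{0},2\mathbf e_1,2\mathbf e_2,\ldots\}$ with \emph{even} outer exponents (condition (C1)); the only even lattice points available in degree $\le 2$ are $\mathbf 0$ and the $2\mathbf e_i$. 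Hence the only possible inner exponent of degree $1$ in such a circuit is a point like $\mathbf e_i$, never $\mathbf e_1+\mathbf e_2$. Therefore no nonnegative circuit polynomial in $C_{n,2}$ can have a monomial $x_1x_2$ in its support, so a sum of them cannot produce the $-2x_1x_2$ term of $f$. This argument is short once the support restriction is spelled out.

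For $\Sigma_{n,2d}\not\subseteq C_{n,2d}$ with $2d\ge 4$ (all $n\ge 1$): the same philosophy applies but now one needs, for $n=1$ and $n\ge 2$ alike, an SOS polynomial carrying a monomial whose exponent cannot be the inner exponent of any admissible circuit supported inside its Newton polytope. In the univariate case $2d=4$ a workable example should be $(x^2-x)^2=x^4-2x^3+x^2$, or more robustly $(x^2+x+1)^2$-type constructions; the point is that an odd-degree monomial such as $x^3$ can only be the inner term of a circuit whose outer exponents are even — forcing outer exponents $\{2,4\}$, whose only interior lattice point is $3$, but then the barycentric/circuit-number constraint from Theorem~\ref{Thm:CircuitPolynomialNonnegativity} and the sign pattern must be checked. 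For general $n$ and $2d\ge 4$ one can reduce to one of the previous families by adding dummy variables or by taking $(x_1^{d}-x_1^{d-1}x_2)^2$ and arguing on its support. I would organize this as: (i) describe the lattice points of $\New(f)\cap(2\N)^n$; (ii) show the offending monomial's exponent is not expressible via (C2) from available even vertices, or is but violates Theorem~\ref{Thm:CircuitPolynomialNonnegativity}; (iii) conclude by linearity of the support constraint over a SONC sum.

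The main obstacle is step (iii) in the general case: one must be careful that in a SONC decomposition $f=\sum\mu_i f_i$ \emph{cancellation} between the $f_i$ could in principle create or destroy monomials, so the support argument needs the standard fact (which follows from condition (C1), since every $f_i$ is nonnegative with nonnegative outer coefficients and its Newton polytope is a face-compatible subpolytope) that $\New(f_i)\subseteq\New(f)$ and, more delicately, that the coefficient of a non-even monomial $\mathbf x^{\boldsymbol\beta}$ in $f$ equals the signed sum of inner coefficients of those $f_i$ having $\boldsymbol\beta$ as inner exponent. Pinning down exactly which $\boldsymbol\beta\in\New(f)$ can legitimately serve as an inner exponent of a circuit with even vertices inside $\New(f)$ — and showing the chosen $f$ has a monomial failing this — is where the real work lies; once that combinatorial lemma is in place, each of the three statements follows from a one-line example.
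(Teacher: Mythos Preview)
Your proof of $\Sigma_{1,2}=C_{1,2}$ is fine and matches the paper. The real problems are in parts two and three: your candidate counterexamples are actually SONC, so the non-containment arguments collapse.

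For $n\ge 2$, $2d=2$: the polynomial $(x_1-x_2)^2=x_1^2-2x_1x_2+x_2^2$ \emph{is} a nonnegative circuit polynomial. Take outer exponents $\boldsymbol\alpha(0)=2\mathbf e_1$, $\boldsymbol\alpha(1)=2\mathbf e_2$ (a one-dimensional simplex, which is allowed since $r\le n$) and inner exponent $\boldsymbol\beta=\mathbf e_1+\mathbf e_2$ with $\lambda_0=\lambda_1=\tfrac12$; then $\Theta_f=2$ and $|f_{\boldsymbol\beta}|=2$, so Theorem~\ref{Thm:CircuitPolynomialNonnegativity} applies. Your claim that ``no nonnegative circuit polynomial in $C_{n,2}$ can have a monomial $x_1x_2$ in its support'' is therefore false. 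The paper instead uses $(1+x_1+\cdots+x_n)^2$, which has \emph{three} (in general $\binom{n+1}{2}$) non-even monomials, each sitting on a different edge of $\Delta_{n,2}$; the resulting system of circuit-number inequalities is then genuinely infeasible, whereas with a single edge it is not.

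For $2d\ge 4$: your univariate candidates fail for the same reason. The polynomial $(x^2-x)^2=x^4-2x^3+x^2$ is itself a nonnegative circuit polynomial (outer exponents $2,4$, inner exponent $3$, $\Theta_f=2=|f_\beta|$), and $(x^2+x+1)^2$ decomposes as $(1+2x+x^2)+(x^2+2x^3+x^4)+x^2$, a sum of two boundary circuits and a monomial square. The support-based obstruction you outline in step~(iii) simply does not exist here. The paper takes a completely different route: it shows that any nonnegative circuit polynomial has at most second-order zeros (if $f(v)=f'(v)=0$ then $f''(v)>0$), hence any SONC polynomial does too, and then observes that $(1+x_1)^{2d}+\cdots+(1+x_n)^{2d}$ has a zero of order $2d\ge 4$. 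This analytic argument on the \emph{order of vanishing} is the key idea you are missing; the purely combinatorial support constraints are not strong enough to produce the obstruction.
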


\begin{proof}
The first statement was already observed in \cite{Iliman:deWolff:Circuits}, but not proven as it is rather obvious. For the sake of completeness, we give a short argument here.
Consider a polynomial $p \in \R[x]_{1,2}$, i.e., $p(x)=ax^2+bx+c$, with $a,b,c \in \R$. Note that the support of $p$ forms a circuit. Obviously, $p$ is nonnegative if and only if $p$ is a nonnegative circuit polynomial. Hence, $C_{1,2}=P_{1,2}=\Sigma_{1,2}$, where the last equality follows by Hilbert's Theorem~\ref{Thm:HilbertConeCoincidePSOS}.

	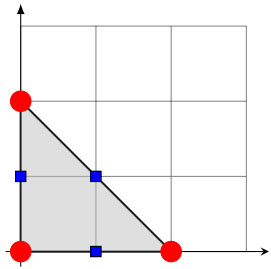
\begin{figure}
	
	\centering
	\begin{subfigure}[t]{0.5\linewidth}
		\centering
		\begin{tikzpicture}
		\draw[very thin,color=gray] (0,0) grid (3,3);    
		\draw[->, >=stealth] (-0.2,0) -- (3.3,0) node[right]{}; 
		\draw[->] (0,-0.2) -- (0,3.3) node[above]{}; 
		
		\coordinate (A) at (0,0);
		\coordinate (B) at (2,0);
		\coordinate (C) at (0,2);
		
		\draw[thick] (A) -- (B);
		\draw[thick] (C) -- (B);
		\draw[thick] (A) -- (C);
		
		\draw[fill=lightgray, opacity=0.5] (A) -- (B) -- (C);
		\node[circle, draw=red, fill=red, scale=0.7] (A) at (0,0) {};
		\node[circle, draw=red, fill=red, scale=0.7] (B) at (2,0) {};
		\node[circle, draw=red, fill=red, scale=0.7] (C) at (0,2) {};
		
		\node[draw, fill=blue, scale=0.5](D) at (1,1) {};
		\node[draw, fill=blue, scale=0.5](E) at (1,0) {};
		\node[draw, fill=blue, scale=0.5](F) at (0,1) {};
		
		\end{tikzpicture}
	\end{subfigure}%
%
%
%
%
%
%
	\caption{The support set of $q(x_1,x_2)$.
		 The even points are the red (round) ones.}
	\label{Fig:CounterexampleSOSnotinSONC:supportofqandr}	
\end{figure}

For proving the second assertion, we explicitly construct a polynomial which is SOS but not SONC. First we explain the bivariate case in detail, then we generalize this idea to an  arbitrary number of variables. 
Consider the following bivariate polynomial of degree $2$
\[
q(x_1,x_2)= 1 + x_1^2 + x_2^2 + 2x_1x_2+2x_1+2x_2=(1+x_1+x_2)^2,
\]
which is clearly SOS. 
The Newton polytope of $q$ is the standard simplex $\Delta_{2,2}$. Additionally to the even vertices we have odd support points on every edge of $\Delta_{2,2}$, see Figure~\ref{Fig:CounterexampleSOSnotinSONC:supportofqandr}. Therefore, the only possibility to write $q$ as a sum of circuit polynomials is:
\[
q= f_1+f_2+f_3= \lf(\frac{1}{2}+\frac{1}{2}x_1^2+2x_1\ri)+\lf(\frac{1}{2}+\frac{1}{2}x_2^2+2x_2\ri)+\lf(\frac{1}{2}x_1^2+\frac{1}{2}x_2^2+2x_1x_2\ri).
\]
Clearly, all circuit polynomials $f_i$ are not nonnegative, because $2 > \Theta_{f_i}=1$ for all $i \in \{1,2,3\}$. Thus, $q$ is not a SONC polynomial. 
For $n\geq 2$ we generalize this idea by constructing a polynomial whose support consists of the vertices of the standard simplex $\Delta_{n,2}$ and, in addition, the midpoints of each of its $\tbinom{n}{2}$ edges:
\[
\hat q(\mathbf{x})=(1+x_1+x_2+\cdots+x_n)^2.
\]
Showing that $\hat q$ is not a SONC polynomial is analogous to the bivariate case.


To prove the third statement, let $f$ be a non-zero nonnegative circuit polynomial in one variable. We first claim that if there is some $v\in \R$ such that $f(v)=f'(v)=0$, then $f''(v)>0$, that is, $f$ may have at most second order zeros. 
It then follows immediately that $(1+x)^{2d}$ for $2d\geq 4$ cannot be a SONC polynomial. Transferring this idea to further variables yields a polynomial showing the general multivariate case for $2d \geq 4$:
\begin{align}
\label{Equ:Polynom_WitnessingMissingCase}
(1+x_1)^{2d} + \cdots + (1+x_n)^{2d} \;\in \Sig_{n,2d}\setminus C_{n,2d} .
\end{align}
To prove the claim, let $p(x)=f(x/v), v\neq 0$. Clearly, $p$ is a nonnegative circuit polynomial as well and $p(1)=p'(1)=0$ by hypothesis. Let $p(x)=ax^n+bx^r+c$, with $0<r<n$. Then we have $a+b+c=an+br=0$, so $b=-\frac{n}{r}a$. If $p''(1)=0$ as well, then $an(n-1)+br(r-1)=an(n-r)=0$, hence $a=0$ and thus, $p=0$ contradicting our assumption. Therefore, $p''(1)\neq 0$ and since $p$ is nonnegative, $p''(1)>0$. 
\end{proof}	

An immediate consequence of the proof of the third statement of Theorem~\ref{Thm:MissingPieceConeContainment}  is the subsequent interesting result.
\begin{cor}
	Squares of SONC polynomials are in general not SONC polynomials.
\end{cor}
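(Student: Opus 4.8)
The plan is to exhibit a single SONC polynomial whose square fails to be SONC, using exactly the witness already constructed in the proof of Theorem~\ref{Thm:MissingPieceConeContainment}(3). Take the univariate polynomial $g(x) = (1+x)^d \in \R[x]$. Then $g$ is nonnegative and in fact a sum of monomial squares only when... well, $g$ need not itself be SONC, so I should be careful: instead I would start from a polynomial that \emph{is} SONC and whose square is not. The cleanest choice is $g(x) = 1 + x$ up to the observation that $C_{1,2} = P_{1,2}$; more generally any nonnegative univariate polynomial of degree $2$ is SONC. So take $p(x) = (1+x)^2 \in C_{1,2}$ (it is a nonnegative circuit polynomial, being a perfect square of degree $2$). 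Its square is $p(x)^2 = (1+x)^4$, which by the claim established in the proof of Theorem~\ref{Thm:MissingPieceConeContainment}(3) — that a nonzero univariate nonnegative circuit polynomial can have at most second-order zeros, so no sum of such polynomials sharing a common zero can have a fourth-order zero there — is \emph{not} a SONC polynomial, since $(1+x)^4$ has a zero of order $4$ at $x=-1$.

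The key steps, in order, are: first, observe that $p(x) = (1+x)^2$ lies in $C_{1,2}$, which is immediate since it is a perfect square of degree $2$ (equivalently, invoke $C_{1,2} = P_{1,2} = \Sigma_{1,2}$ from the first part of Theorem~\ref{Thm:MissingPieceConeContainment}); second, compute $p^2 = (1+x)^4$; third, invoke the structural fact from the proof of Theorem~\ref{Thm:MissingPieceConeContainment}(3) that $(1+x)^{2d} \notin C_{1,2d}$ for $2d \geq 4$, specialized to $2d = 4$. Stringing these together shows $p \in C_{1,2}$ but $p^2 \notin C_{1,4}$, which is precisely the assertion of the corollary. If one wants a multivariate witness as well, the same argument applied coordinatewise to $(1+x_1)^2 + \cdots + (1+x_n)^2$ — which is SONC as a sum of monomial-square-plus-circuit pieces — has square dominating the term $(1+x_i)^4$ in a way that still obstructs a SONC decomposition, but the univariate example already suffices for the stated generality (``in general'').

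There is essentially no obstacle here: the corollary is an immediate reading-off of the proof just completed, and the only point requiring a word of care is making sure the starting polynomial is genuinely SONC — which is why I would use a degree-$2$ polynomial, where SONC, SOS, and nonnegativity all coincide by Hilbert's theorem, rather than something of higher degree. The one thing worth stating explicitly is the logical structure: we are not claiming squares of SONC polynomials are \emph{never} SONC (they sometimes are, e.g. squares of monomials), only that the class $C_{n,2d}$ is not closed under squaring, and a single counterexample settles this. I would phrase the proof in two or three sentences, citing Equation~\eqref{Equ:Polynom_WitnessingMissingCase} and the claim preceding it.
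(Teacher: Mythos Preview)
Your proposal is correct and follows essentially the same route as the paper: both arguments rest on the order-of-vanishing obstruction established in the proof of Theorem~\ref{Thm:MissingPieceConeContainment}(3), namely that a nonzero nonnegative circuit polynomial has at most second-order zeros, so $(1+x)^4$ cannot be SONC. The paper's one-line proof is phrased slightly more abstractly --- it invokes the (multivariate) fact that the Hessian of a SONC polynomial is positive definite at its zeros, from which it follows that the square of \emph{any} SONC polynomial with a zero has vanishing Hessian there and hence fails to be SONC --- whereas you produce the single explicit witness $p=(1+x)^2\in C_{1,2}$ with $p^2=(1+x)^4\notin C_{1,4}$; for a statement of the form ``in general not'', your concrete example is entirely sufficient and arguably cleaner.
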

\begin{proof}
	The result can be deduced by the fact, that the Hessian of a SONC polynomial is positive definite at its zeros.
\end{proof}
\begin{remark}
Choosing $2d=4$ in \eqref{Equ:Polynom_WitnessingMissingCase} yields a polynomial in $\Sig_{n,4}\setminus C_{n,4}$, which explicitly witnesses the missing case of Theorem~\ref{Thm:ConeContainment} (3).
\end{remark}
\smallskip
Note that the degree of each variable part in the sum of \eqref{Equ:Polynom_WitnessingMissingCase} needs not to be the same, i.e., a more general polynomial in $\Sig_{n,2d}\setminus C_{n,2d}$ is
\[
(1+x_1)^{2d_1} + \cdots + (1+x_n)^{2d_n},
\]
where $d\coloneqq \max\{d_1,\ldots, d_n\}$ and $d_i\geq 2$ for all $i=1,\ldots,n$.

\smallskip
Another reasonable approach to find a polynomial showing $\Sigma_{n,2d} \not\subseteq C_{n,2d}$ for all $(n, 2d)$ with $n\geq 2$ and $2d \geq 4$ is by making use of the idea in the proof of the second assertion of Theorem~\ref{Thm:MissingPieceConeContainment}. Namely, to construct a polynomial whose support contains points on the boundary of the Newton polytope. More precisely, we reflect the standard simplex $\Delta_{2,2}$ with additional boundary points on each edge with respect to the hypotenuse, see Figure~\ref{Fig:CounterexampleSOSnotinSONC_supports}, to get the following bivariate polynomial of degree $4$:
\[
s(x_1,x_2)=(x_1+x_2+x_1x_2)^2=x_1^2x_2^2+x_1^2+x_2^2+2x_1x_2+2x_1^2x_2^{}+2x_1^{}x_2^2.
\]
This polynomial is SOS but not SONC. By adding additional variables of certain degree,
yields a polynomial in $\Sig_{n,2d}\setminus C_{n,2d}$:
\begin{align*}
\hat s(\mathbf{x})=(x_1+x_2+x_1x_2)^2 + \sum_{i=3}^{n} x_i^{2d_i},
\end{align*}
with $d\coloneqq\max\{d_1,\ldots, d_n\}$ and $d_i\geq 2$ for all $i=1,\ldots,n$.

\begin{figure}[h]
	\centering
	\begin{tikzpicture}
	\draw[very thin,color=gray] (0,0) grid (3,3);    
	\draw[->] (-0.2,0) -- (3.3,0) node[right]{}; 
	\draw[->] (0,-0.2) -- (0,3.3) node[above]{}; 
	
	\coordinate (A) at (2,2);
	\coordinate (B) at (2,0);
	\coordinate (C) at (0,2);
	
	\draw[thick] (A) -- (B);
	\draw[thick] (C) -- (B);
	\draw[thick] (A) -- (C);
	
	\draw[fill=lightgray, opacity=0.5] (A) -- (B) -- (C);
	
	\node[circle, draw=red, fill=red, scale=0.7] (A) at (2,2) {};
	\node[circle, draw=red, fill=red, scale=0.7] (B) at (2,0) {};
	\node[circle, draw=red, fill=red, scale=0.7] (C) at (0,2) {};
	
	\node[draw, fill=blue, scale=0.5](D) at (1,1) {};
	\node[draw, fill=blue, scale=0.5](E) at (1,2) {};
	\node[draw, fill=blue, scale=0.5](F) at (2,1) {};
	
	\end{tikzpicture}
	\caption{The support set of $s(x_1,x_2)$.}
	\label{Fig:CounterexampleSOSnotinSONC_supports}		
\end{figure}
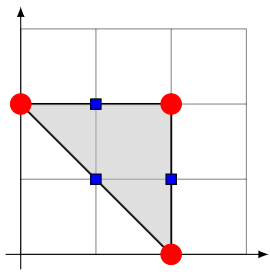

Given a circuit polynomial $f$ we say $f$ is \struc{\emph{proper}} if it is not a sum of monomial squares.
Due to the special structure of circuit polynomials, we make the following simple observation.


\begin{lemma}
	Let $f$  be an $n$-variate circuit polynomial of degree $2d$ with inner term $f_{\boldsymbol{\beta}} \mathbf{x}^{\boldsymbol{\beta}}$. If $f$ is a proper circuit polynomial then
	\renewcommand{\labelenumi}{\textnormal{(\arabic{enumi})}}
	\begin{enumerate}
		\item $2d \geq n+1$, for $\boldsymbol{\beta} \notin (2\N)^n$,
		\item $d\geq n+1$, for $\boldsymbol{\beta} \in (2\N)^n$.
	\end{enumerate}
	\label{Lem:RealizabilityCP}
\end{lemma}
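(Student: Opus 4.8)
The goal is to give lower bounds on the degree of a proper circuit polynomial, distinguishing according to whether the inner exponent $\boldsymbol{\beta}$ is even or not. The natural approach is a dimension/parity count on the Newton simplex. Recall from (C1) that $\New(f)$ is a simplex with $r+1$ even vertices $\boldsymbol{\alp}(0),\ldots,\boldsymbol{\alp}(r)$, all lying in $\{\boldsymbol{\alp}\in\N^n : |\boldsymbol{\alp}|\le 2d\}$, and that since $f$ is proper we have $r\ge 1$, with $\boldsymbol{\beta}$ in the relative interior of this simplex (by the remark following Definition~\ref{Def:CircuitPolynomial}).

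\textbf{Key steps.} First I would observe that because $\New(f)$ has dimension $r\ge 1$ and contains the interior lattice point $\boldsymbol{\beta}$, the vertices $\boldsymbol{\alp}(j)$ cannot all be equal and in fact must span an $r$-dimensional affine subspace; pinning down how ``spread out'' these even vertices must be, subject to $|\boldsymbol{\alp}(j)|\le 2d$, is the crux. For part (1): since the $\boldsymbol{\alp}(j)$ are distinct even points and $\boldsymbol{\beta}=\sum\lam_j\boldsymbol{\alp}(j)$ with all $\lam_j>0$, if $\boldsymbol{\beta}\notin(2\N)^n$ then some coordinate $\beta_i$ is odd, so not all $\alp_i(j)$ can agree — at least two of the vertices differ in coordinate $i$, and being even they differ by at least $2$ there. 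I would then argue that to fit $r+1$ distinct even vertices in a simplex of the required dimension inside the region $|\boldsymbol{\alp}|\le 2d$, one needs $2d$ large enough: the cleanest route is to note that along a path through the $r+1$ vertices one accumulates coordinate-differences that are each even and nonzero, forcing the degree bound $2d\ge r+1\ge n+1$ in the worst case $r=n$; a careful version handles $r<n$ too since then the relevant inequality is even easier. For part (2), when $\boldsymbol{\beta}\in(2\N)^n$: here $\boldsymbol{\beta}$ itself is an even interior point of the simplex, and the key extra input is that an even point strictly inside a lattice simplex with even vertices forces the vertices to be ``twice'' as spread out — I would make this precise by observing that $\boldsymbol{\beta}$ and the $\boldsymbol{\alp}(j)$ all lie in the sublattice $(2\N)^n$, so after dividing by $2$ we get a genuine lattice simplex in $\N^n$ of dimension $r$ with an interior lattice point, hence its vertices have coordinate sum at least... — pushing this through yields $2d\ge 2(n+1)$, i.e. $d\ge n+1$, in the extremal case.

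\textbf{Alternative cleaner argument.} Rather than path-counting, I would probably prefer the following: the standard simplex $\Delta_{r,s}$ (with $r+1$ vertices $\mathbf{0}, s\mathbf{e}_1,\ldots,s\mathbf{e}_r$) is, up to lattice-affine equivalence preserving the relevant structure, the ``smallest'' configuration, and any simplex with $r+1$ even vertices and an interior (odd-$\boldsymbol{\beta}$, resp.\ even-$\boldsymbol{\beta}$) lattice point must, after an appropriate unimodular change of coordinates, contain a copy of $\Delta_{r,2}$ (resp.\ $\Delta_{r,4}$) — because the edges emanating from one vertex, being even and linearly independent, have length $\ge 2$ in each new coordinate direction, and parity of $\boldsymbol{\beta}$ upgrades this to $\ge 4$ in case (2). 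Since $r\ge 1$ and the support sits in degree $\le 2d$, and in the truly constraining situation $r=n$, this gives $2d\ge 2$ is not enough — one must account for the inner point lying \emph{strictly} inside, which is what forces the vertices apart and produces $n+1$ rather than $n$. I would then just read off: dimension $n$ simplex with even vertices and a strictly interior lattice point needs its vertices to have total coordinate span summing to at least $n+1$ (parity obstruction: you cannot have an odd-coordinate interior point in a ``thin'' even simplex), giving $2d\ge n+1$; the even-$\boldsymbol{\beta}$ case doubles the span, giving $2d\ge 2(n+1)$.

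\textbf{Main obstacle.} The delicate point is the combinatorial/parity argument showing that a strictly interior lattice point $\boldsymbol{\beta}$ (with the prescribed parity) cannot fit inside a lattice simplex whose even vertices are ``too close together,'' and translating this into the exact numerical bounds $2d\ge n+1$ and $d\ge n+1$. I expect this to require either a clean unimodular normalization of $\New(f)$ to the standard simplex $\Delta_{n,2d}$ (using that its vertices are even and affinely independent) followed by an explicit check that an odd/even interior point forces $2d\ge n+1$ / $2d\ge 2(n+1)$, or a direct counting argument summing the coordinate gaps along the vertices. The arithmetic is routine once the normalization is set up correctly, but getting the ``$+1$'' (as opposed to just $2d\ge n$) right — which is exactly the content of the proper vs.\ monomial-square distinction, i.e.\ $r\ge 1$ together with strict interiority — is where care is needed.
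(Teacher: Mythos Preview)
Your route via vertex-spread, path-counting, and unimodular normalization is far more elaborate than what is needed, and it obscures the one-line observation that the paper actually uses. The paper simply bounds the degree of the inner exponent itself: since (implicitly taking $r=n$) $\New(f)$ is a full-dimensional simplex inside the closed orthant $\R^n_{\ge 0}$, its interior lies in $\R^n_{>0}$, so the interior lattice point $\boldsymbol{\beta}$ satisfies $\beta_i\ge 1$ for every $i$, hence $|\boldsymbol{\beta}|\ge n$; if $\boldsymbol{\beta}\in(2\N)^n$ then in fact $\beta_i\ge 2$ and $|\boldsymbol{\beta}|\ge 2n$. On the other hand $|\boldsymbol{\beta}|<2d$, because $\boldsymbol{\beta}$ is strictly interior while $\deg f=2d$ forces the face $\New(f)\cap\{|\boldsymbol{\alp}|=2d\}$ to be proper. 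Combining gives $n<2d$, respectively $2n<2d$ (and then $d\ge n+1$ since $2d$ is even). No analysis of how the vertices are laid out is needed --- only where $\boldsymbol{\beta}$ sits.

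Your sketch also contains a genuine slip. You write that ``a careful version handles $r<n$ too since then the relevant inequality is even easier,'' but in fact for $r<n$ the stated inequalities can fail outright: $f=x_1^2+x_2^2-2x_1x_2$ is a proper circuit polynomial with $n=2$, $r=1$, $2d=2$, $\boldsymbol{\beta}=(1,1)\notin(2\N)^2$, yet $2d=2<3=n+1$. So the lemma tacitly requires $r=n$, and the argument must \emph{use} full-dimensionality rather than treat it as merely the ``worst case.'' Relatedly, the chain ``$2d\ge r+1\ge n+1$'' is oriented the wrong way, and the ``$+1$'' you flag as the main obstacle is not delicate at all --- it falls out immediately from the strict inequality $|\boldsymbol{\beta}|<2d$ between integers once you bound $|\boldsymbol{\beta}|$ directly instead of the vertex configuration.
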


\begin{proof}	
	Consider a circuit polynomial $f$ as in \eqref{Equ:CircuitPolynomial}
	\[
	f(x_1,\ldots,x_n) =  \sum_{j=0}^r f_{\boldsymbol{\alp}(j)} \mathbf{x}^{\boldsymbol{\alp}(j)} + f_{\boldsymbol{\beta}} \mathbf{x}^{\boldsymbol{\beta}}.
	\]
	Recall that $\boldsymbol{\beta} \in \Relint(\New(f))$. Thus, the smallest possible inner exponent $\boldsymbol{\beta}$ such that $f$ may be a circuit polynomial is $\beta_i=1$, $i=1,\ldots,n$ for $\boldsymbol{\beta}$ odd and $\beta_i=2$,  $i=1,\ldots,n$ if $\boldsymbol{\beta}$ is even. It follows immediately that if the number of variables is strictly smaller than the degree, i.e., $n < 2d$, a circuit polynomial with odd inner exponent is realizable. And a circuit polynomial with $\boldsymbol{\beta} \in (2\N)^n$ is only realizable if $2n < 2d$. 
\end{proof}

Lemma~\ref{Lem:RealizabilityCP} holds of course as well for nonnegative circuit polynomials. The above statement is in particular useful for studying the real zeros of SONC polynomials.

\medskip
So far SONC polynomials are only studied in the affine case. In the literature often results on nonnegative polynomials and SOS are stated homogeneously, i.e., for forms. In what follows we also consider SONC forms and investigate their behavior. A first important observation is, that the property to be SONC is inherited under homogenization and, conversely, is preserved when a form is dehomogenized into a polynomial.
\begin{prop}
	\label{Prop:SONC homo preserved}
	If a polynomial $p \in \R[x_1,\ldots,x_n]$ is a SONC polynomial of degree $2d$, then its homogenization $\ovl p (x_0,\ldots,x_n)= x_0^{2d} \, p\lf(\frac{x_1}{x_0}, \ldots, \frac{x_n}{x_0}\ri)$ is also SONC; and vice versa.
\end{prop}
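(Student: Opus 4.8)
The plan is to reduce both implications to the level of a single (nonnegative) circuit polynomial, using that homogenization to degree $2d$ and dehomogenization (setting $x_0=1$) are \emph{linear} maps, the first on $\R[\mathbf{x}]_{n,2d}$ and the second on the space of forms of degree $2d$ in $x_0,\mathbf{x}$. Thus if $p=\sum_i\mu_i f_i$ is a SONC representation then $\ovl p=\sum_i\mu_i\ovl{f_i}$, and if $\ovl p=\sum_i\mu_i g_i$ is a SONC representation of the form $\ovl p$ then its dehomogenization equals $\sum_i\mu_i\,g_i(1,\mathbf{x})$; since $C_{n,2d}$ and $C_{n+1,2d}$ are convex cones (Theorem~\ref{Thm:ConeContainment}), it suffices to treat circuit polynomials.

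For homogenization I would introduce the map $\phi\colon\boldsymbol{\gamma}\mapsto(2d-|\boldsymbol{\gamma}|,\boldsymbol{\gamma})$ from $\R^n$ onto the hyperplane $\{y\in\R^{n+1}:\sum_j y_j=2d\}$; it is an injective affine bijection and, read on exponents, records exactly the effect of homogenization. Given a nonnegative circuit polynomial $f$, this $\phi$ sends $\supp(f)$ bijectively onto $\supp(\ovl f)$ preserving coefficients, sends the simplex $\New(f)$ onto the simplex $\New(\ovl f)$ preserving vertices, preserves evenness of lattice points (if $\boldsymbol{\gamma}$ is even then $|\boldsymbol{\gamma}|$ is even, so $2d-|\boldsymbol{\gamma}|$ is even), and --- being affine and weight-preserving --- sends the barycentric identity $\boldsymbol{\beta}=\sum_j\lambda_j\boldsymbol{\alp}(j)$ to $\phi(\boldsymbol{\beta})=\sum_j\lambda_j\phi(\boldsymbol{\alp}(j))$ with the same $\lambda_j$. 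Hence $\ovl f$ again satisfies (C1) and (C2) with unchanged barycentric coordinates, so $\Theta_{\ovl f}=\Theta_f$, and $\boldsymbol{\beta}\in(2\N)^n$ if and only if $\phi(\boldsymbol{\beta})\in(2\N)^{n+1}$. By Theorem~\ref{Thm:CircuitPolynomialNonnegativity} nonnegativity transfers from $f$ to $\ovl f$ (the case $r=0$ being the trivial statement that a monomial square homogenizes to a monomial square), and therefore $\ovl p\in C_{n+1,2d}$.

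For dehomogenization note first that if $\ovl p\in C_{n+1,2d}$ then $\ovl p(1,\mathbf{x})=p$, so this operation does return $p$. When a circuit polynomial $g$ is itself a form of degree $2d$, its support lies on the hyperplane above, the coordinate projection $\pi\colon(y_0,y_1,\dots,y_n)\mapsto(y_1,\dots,y_n)$ restricted there is precisely $\phi^{-1}$, and the argument of the previous paragraph run backwards shows that $g(1,\mathbf{x})$ is a nonnegative circuit polynomial; summing, $p$ is SONC. The difficulty --- and the step I expect to be the real obstacle --- is that a SONC representation of the form $\ovl p$ need not consist of circuit polynomials that are themselves forms: a summand $g$ may carry lower-degree terms which cancel against the other summands, and then $\pi$ can collapse $\supp(g)$ (distinct exponents of $g$ acquiring equal image, or $\New(g)$ dropping dimension because it extends in the $\mathbf{e}_0$-direction), so that $g(1,\mathbf{x})$ is no longer a circuit polynomial but only a nonnegative polynomial supported on collapsed, partly collinear data. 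I would resolve this either by a normalization lemma showing that $\ovl p$ always admits a SONC representation by circuit \emph{forms} --- whereupon the previous paragraph finishes everything at once --- or, more hands-on, by checking directly that $g(1,\mathbf{x})$ is SONC, re-splitting the collapsed support into circuit polynomials and controlling nonnegativity through weighted AM--GM comparisons of the circuit numbers involved (colliding positive outer coefficients merely add, which only enlarges the pertinent circuit number, while the contribution of the inner term stays in check by concavity). Route (i), the homogeneous normalization of SONC certificates, looks like the clean path, and I would expect that lemma to absorb essentially all of the work.
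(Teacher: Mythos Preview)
Your treatment of the forward direction coincides with the paper's, only made explicit: the paper's entire argument is to reduce to a single circuit polynomial $f$ and then invoke ``$f\ge 0\Leftrightarrow \ovl f\ge 0$'', using that for a polynomial with circuit support nonnegativity and SONC membership are the same. Your affine map $\phi$ is exactly what justifies the tacit step ``$\ovl f$ is again a circuit polynomial with the same barycentric data''; the consequence $\Theta_f=\Theta_{\ovl f}$ you derive is in fact recorded by the paper as a separate lemma immediately after the proposition.

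On the reverse direction you are more careful than the paper. The reduction ``it suffices to prove the statement for circuit polynomials'' is, as written, justified only for the forward implication: a SONC certificate $\ovl p=\sum_i\mu_i g_i$ of the \emph{form} $\ovl p$ need not consist of forms, so $g_i(1,\mathbf{x})$ need not be the dehomogenization of a circuit form, and the chain ``$g$ circuit, $g\ge 0\Leftrightarrow g(1,\cdot)\ge 0$, $g(1,\cdot)$ circuit'' may break at the last link. The paper does not comment on this at all. Both of your proposed fixes are viable. For route~(ii) the key point is that $g_i(1,\mathbf{x})$ is nonnegative with at most one term that is not a monomial square, and one weighted AM--GM step (grouping the $\lambda_j$ over exponents with coinciding projection, precisely the ``concavity'' you allude to) shows that the associated circuit number can only increase under such a collapse; if the collapsed outer exponents are still affinely dependent a further Carath\'eodory-type reduction is needed, and this is where the genuine work sits. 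Your proposal is therefore at least as complete as the paper's own proof, and more honest about where the difficulty lies.
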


\begin{proof}
Since a SONC polynomial $p$ is sum of nonnegative circuit polynomials $f_i$, i.e., $p = \sum_{i=1}^k \mu_i f_i, \mu_i \geq 0$, it suffices to prove the statement for circuit polynomials. Note that the $f_i$ may have different degrees. Hence, homogenize every $f_i$ with $x_0^{2d}$ even though its degree may be smaller.

For a single circuit polynomial $f$ the statement is clear because in this case, the SONC cone is equal to the nonnegativity cone, i.e., $f\geq 0$ if and only if $\ovl f \geq 0$. 
\end{proof}	

Even more is true, namely the circuit numbers of a nonnegative circuit polynomial and its homogenization are equal. 
\begin{lemma}
Let $f\in C_{n,2d}$ be a nonnegative circuit polynomial and $\ovl f$ be its homogenization, then $\Theta_f = \Theta_{\ovl f}$.
\end{lemma}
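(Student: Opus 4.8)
The plan is to reduce the claim to a direct comparison of the defining data of $f$ and $\ovl f$ and show that the circuit number, as given by formula \eqref{Equ:DefCircuitNumber}, depends only on quantities that are unchanged under homogenization. First I would recall that $f$ is a circuit polynomial with outer exponents $\boldsymbol{\alp}(0),\ldots,\boldsymbol{\alp}(r)$, outer coefficients $f_{\boldsymbol{\alp}(j)}>0$, inner exponent $\boldsymbol{\beta}$ with barycentric coordinates $\lambda_0,\ldots,\lambda_r$, and that $\Theta_f = \prod_{j=0}^r (f_{\boldsymbol{\alp}(j)}/\lambda_j)^{\lambda_j}$.

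Next I would identify the circuit structure of $\ovl f$. Writing $f(\mathbf{x}) = \sum_{j=0}^r f_{\boldsymbol{\alp}(j)} \mathbf{x}^{\boldsymbol{\alp}(j)} + f_{\boldsymbol{\beta}} \mathbf{x}^{\boldsymbol{\beta}}$, its homogenization to degree $2d$ is $\ovl f(x_0,\mathbf{x}) = \sum_{j=0}^r f_{\boldsymbol{\alp}(j)} x_0^{2d-|\boldsymbol{\alp}(j)|}\mathbf{x}^{\boldsymbol{\alp}(j)} + f_{\boldsymbol{\beta}} x_0^{2d-|\boldsymbol{\beta}|}\mathbf{x}^{\boldsymbol{\beta}}$, so the new outer exponents are $\wh{\boldsymbol{\alp}}(j) = (2d-|\boldsymbol{\alp}(j)|,\boldsymbol{\alp}(j))$ and the new inner exponent is $\wh{\boldsymbol{\beta}} = (2d-|\boldsymbol{\beta}|,\boldsymbol{\beta})$, while the coefficients $f_{\boldsymbol{\alp}(j)}, f_{\boldsymbol{\beta}}$ are literally unchanged. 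The key point I would verify is that the barycentric coordinates are preserved: from $\boldsymbol{\beta} = \sum_j \lambda_j \boldsymbol{\alp}(j)$ and $\sum_j \lambda_j = 1$ one gets $|\boldsymbol{\beta}| = \sum_j \lambda_j |\boldsymbol{\alp}(j)|$, hence also $2d - |\boldsymbol{\beta}| = \sum_j \lambda_j (2d - |\boldsymbol{\alp}(j)|)$, which together give $\wh{\boldsymbol{\beta}} = \sum_j \lambda_j \wh{\boldsymbol{\alp}}(j)$ with the same coefficients $\lambda_j$. Uniqueness of the representation (condition (C2) for $\ovl f$, which holds since $\ovl f$ is a genuine circuit polynomial by Proposition~\ref{Prop:SONC homo preserved}) then forces these $\lambda_j$ to be exactly the barycentric coordinates of $\wh{\boldsymbol{\beta}}$.

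With both the outer coefficients and the barycentric coordinates identical for $f$ and $\ovl f$, formula \eqref{Equ:DefCircuitNumber} gives $\Theta_{\ovl f} = \prod_{j=0}^r (f_{\wh{\boldsymbol{\alp}}(j)}/\lambda_j)^{\lambda_j} = \prod_{j=0}^r (f_{\boldsymbol{\alp}(j)}/\lambda_j)^{\lambda_j} = \Theta_f$, completing the proof. I do not anticipate a serious obstacle here; the only point requiring a little care is the edge case $r=0$, where $f$ is a monomial square, $\boldsymbol{\beta} = \boldsymbol{\alp}(0)$, $\lambda_0 = 1$, and $\Theta_f = f_{\boldsymbol{\alp}(0)} = \Theta_{\ovl f}$ trivially, so it should be dispatched in one line. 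The mild subtlety is making sure the affine-independence/simplex condition (C1) for $\ovl f$ is in place so that the barycentric coordinates are well-defined and unique; this again is exactly what is guaranteed by the fact that $\ovl f$ is itself a circuit polynomial.
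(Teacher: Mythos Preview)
Your proposal is correct and follows essentially the same route as the paper: both arguments show that the outer coefficients are literally unchanged under homogenization and that the barycentric coordinates of the inner exponent are preserved (the paper phrases this as the added coordinate row $2d-|\cdot|$ being linearly dependent on the others, while you verify $\wh{\boldsymbol{\beta}}=\sum_j\lambda_j\wh{\boldsymbol{\alp}}(j)$ directly), whence $\Theta_{\ovl f}=\Theta_f$ by the defining formula. Your version is slightly more explicit about uniqueness via (C2) and the $r=0$ edge case, but there is no substantive difference in approach.
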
 

\begin{proof}
Recall that the barycentric coordinates are given by the convex combination of the interior points in terms of the vertices, thus by a system of $n$ resp. $n+1$ linear equations in $r$ unknowns:
	$$\boldsymbol{\beta} = \sum_{j=0}^r \lambda_j \boldsymbol{\alp}(j)\; \text{ and }\; (2d-|\boldsymbol{\beta}|,\boldsymbol{\beta})^T = \sum_{j=0}^r \ovl\lambda_j (2d-|\boldsymbol{\alp}(j)|,\boldsymbol{\alp}(j))^T. $$
Obviously, the row given by the homogenization is  linearly dependent from the other rows, hence $\lambda _j= \ovl \lambda_j$ for all $j=0,\ldots,r$. Thus, $\Theta_f = \Theta_{\ovl f}$. 
\end{proof}
\medskip

\section{Real Zeros of SONC Polynomials}
\label{Sec:RealZeros}

This section offers a comprehensive classification of the real zeros of SONC polynomials as well as SONC forms. 

A first result for an upper bound of affine real zeros for a nonnegative circuit polynomial having a constant term is given by Iliman and de Wolff \cite[Corollary 3.9]{Iliman:deWolff:Circuits} as a consequence of Theorem~\ref{Thm:CircuitPolynomialNonnegativity}. Here, we study the more general case of circuit polynomials, which do not need to have a constant term. If we want to emphasize that a circuit polynomial does not have a constant term, we refer to a \struc{\emph{non-constant term}} circuit polynomial. For such a polynomial certainly there appears one more zero, namely the origin, and in some cases there are infinitely many zeros in addition. This occurs for instance if every outer term of the polynomial contains the variable $x_i$. 
Then, the additional zeros are zeros on the coordinate hyperplanes. 
 To be more specific, we consider Laurent polynomials. Then we can write a non-constant term circuit polynomial $f$ as an irreducible product
\[
f\;= \;\mathbf{x}^{-\boldsymbol{\alp}(j)} \cdot f^{c}, 
\]
with some exponent $\boldsymbol{\alp}(j)$ and $f^{c}$ being a constant term circuit polynomial. Hence, the original upper bound is valid on the real locus of the one nontrivial irreducible component, if we omit the obvious redundant part of the circuit polynomial. Thus, we often reduce ourselves to observing the zeros in $(\R^*)^n$, if we are only interested in a finite zero set. For convenience we define
\begin{align*}
\struc{\cV^*(f)}\; \coloneqq\; \cV(f) \cap (\R^{*})^{n}.
\end{align*}

Another instance of a non-constant term circuit polynomial possibly having infinitely many zeros is, when its Newton polytope is not full-dimensional. Consider the following simple example of an irreducible extremal circuit polynomial
\begin{align*}
r(x_1,x_2)=x_1^4+x_2^2-2x_1^2x_2=(x_1^2-x_2)^2 \in \partial C_{2,4},
\end{align*}
which has zeros at all points of the form $(v,v^2), v\in \R$. The true issue here is, that $r$ is `essentially' a polynomial in one variable, not two. See its one-dimensional Newton polytope in Figure~\ref{Fig:DegenerateCircuit}. We call a circuit polynomial \struc{\emph{degenerate}} if it has infinitely many zeros and has  not a full-dimensional Newton polytope. To  avoid the aforementioned issue, we mostly consider nondegenerate circuit polynomials.

\begin{figure}[h]
	\centering
	\begin{tikzpicture}[scale=0.87]
	\draw[very thin,color=gray] (0,0) grid (5,3);    
	\draw[->] (-0.2,0) -- (5.3,0) node[right]{}; 
	\draw[->] (0,-0.2) -- (0,3.3) node[above]{}; 
	
	\coordinate (B) at (4,0);
	\coordinate (C) at (0,2);
	
	\draw[thick] (C) -- (B);
	
	
	\node[circle, draw=red, fill=red, scale=0.7] (B) at (4,0) {};
	\node[circle, draw=red, fill=red, scale=0.7] (C) at (0,2) {};
	
	\node[draw, fill=blue, scale=0.5](F) at (2,1) {};
	
	\end{tikzpicture}
	\caption{The Newton polytope of $r(x_1,x_2)$.}
	\label{Fig:DegenerateCircuit}		
\end{figure}
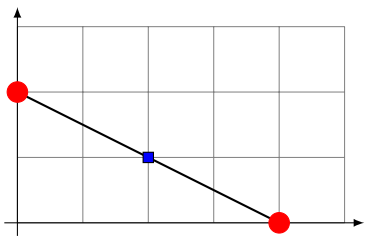
\medskip

Now we state slightly modified versions of the upper bound of affine real zeros for a nonnegative circuit polynomial $f$ due to the preceding considerations and the boundary condition, which gives a necessary and sufficient condition for $f$ to lie on the boundary of $P_{n,2d}$, see \cite[Corollaries 3.9 and 3.11]{Iliman:deWolff:Circuits}.
\begin{cor}
	A nondegenerate nonnegative circuit polynomial $f \in \partial P_{n,2d}$ has at most $2^n$ affine real zeros $\mathbf{v}$ in $(\R^*)^n$ all of which only differ in the signs of their entries. Therefore, every entry of $\mathbf{v}$ has the same norm, i.e., the zeros are of the form $(\pm v_1,\pm v_2,\ldots,\pm v_n)$.
	\label{Cor:NumberAffineZeroesModified}	
\end{cor}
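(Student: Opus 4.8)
The statement is essentially a reformulation of \cite[Corollaries 3.9 and 3.11]{Iliman:deWolff:Circuits} adapted to the nondegenerate, possibly non-constant-term setting we have set up above. The plan is to reduce to the constant-term case and then invoke Theorem~\ref{Thm:CircuitPolynomialNonnegativity} together with the equality case of the weighted AM--GM inequality. First I would use the factorization $f = \mathbf{x}^{-\boldsymbol{\alp}(j)} \cdot f^c$ with $f^c$ a constant-term nonnegative circuit polynomial: on $(\R^*)^n$ the monomial factor $\mathbf{x}^{-\boldsymbol{\alp}(j)}$ is nowhere zero, so $\cV^*(f) = \cV^*(f^c)$, and $f^c$ has the same circuit number as $f$ and lies on $\partial P_{n,2d'}$ for the appropriate $d'$. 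So it suffices to treat a constant-term nonnegative circuit polynomial $g(\mathbf{x}) = \sum_{j=0}^r g_{\boldsymbol{\alp}(j)} \mathbf{x}^{\boldsymbol{\alp}(j)} + g_{\boldsymbol{\beta}} \mathbf{x}^{\boldsymbol{\beta}}$ with $\New(g)$ full-dimensional and $g \in \partial P_{n,2d}$.

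Next I would recall from the proof of Theorem~\ref{Thm:CircuitPolynomialNonnegativity} how one establishes nonnegativity: applying the weighted AM--GM inequality with weights $\lambda_j$ to the outer monomials (which are positive on all of $\R^n$ since the $\boldsymbol{\alp}(j)$ are even) gives $\sum_{j=0}^r g_{\boldsymbol{\alp}(j)} \mathbf{x}^{\boldsymbol{\alp}(j)} \geq \Theta_g \, |\mathbf{x}^{\boldsymbol{\beta}}|$, and the boundary condition forces $|g_{\boldsymbol{\beta}}| = \Theta_g$ (in the odd case) or $g_{\boldsymbol{\beta}} = -\Theta_g$ (in the even case). Hence a point $\mathbf{v} \in (\R^*)^n$ is a zero of $g$ if and only if two things happen simultaneously: the AM--GM inequality is tight at $\mathbf{v}$, and the sign of the inner monomial $g_{\boldsymbol{\beta}} \mathbf{v}^{\boldsymbol{\beta}}$ is opposite to that of the (positive) outer sum, i.e. $g_{\boldsymbol{\beta}} \mathbf{v}^{\boldsymbol{\beta}} = -\Theta_g |\mathbf{v}^{\boldsymbol{\beta}}|$. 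Equality in weighted AM--GM holds precisely when all the terms $g_{\boldsymbol{\alp}(j)} |\mathbf{v}|^{\boldsymbol{\alp}(j)} / \lambda_j$ are equal; writing $y_i = \log|v_i|$ this is a system of $r$ affine-linear equations in the $n$ unknowns $y_1,\dots,y_n$, and because $\{\boldsymbol{\alp}(0),\dots,\boldsymbol{\alp}(r),\boldsymbol{\beta}\}$ is a circuit with $\New(g)$ full-dimensional one checks the system has a unique solution for $(|v_1|,\dots,|v_n|) \in \R_{>0}^n$. This pins down the vector of absolute values uniquely, so all zeros in $(\R^*)^n$ share the same coordinate norms and are of the form $(\pm v_1,\dots,\pm v_n)$, giving the bound $2^n$; the sign constraint $g_{\boldsymbol{\beta}} \mathbf{v}^{\boldsymbol{\beta}} < 0$ then selects which of these sign patterns actually occur (exactly half of them when $\boldsymbol{\beta}$ is odd, and in the even case $\mathbf{v}^{\boldsymbol{\beta}} > 0$ always so all $2^n$ sign patterns are zeros).

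The main obstacle I expect is the linear-algebra bookkeeping in the uniqueness step: I must argue carefully that full-dimensionality of $\New(g)$ forces $r = n$ (so that the $n$ outer exponents together with the simplex condition (C1) span) and that the resulting $n\times n$ linear system in the logarithms has a nonsingular matrix — this is where the circuit/affine-independence hypothesis and the exclusion of degenerate polynomials are genuinely used, since a lower-dimensional Newton polytope would leave a free direction along which zeros propagate (as in the example $r(x_1,x_2) = (x_1^2 - x_2)^2$). The rest — translating the factorization, tracking signs in the odd versus even cases, and noting that the monomial prefactor does not affect $\cV^*$ — is routine. I would close by remarking that the count $2^n$ is only the bound in $(\R^*)^n$, consistent with the definition of $\cV^*(f)$, and that additional zeros on coordinate hyperplanes (including the origin for a non-constant-term $f$) are precisely the ones we have deliberately set aside.
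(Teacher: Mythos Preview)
Your proposal is correct and follows essentially the same approach the paper relies on: the paper does not give an independent proof of this corollary but simply cites \cite[Corollaries 3.9 and 3.11]{Iliman:deWolff:Circuits} together with the preceding reduction of the non-constant-term case via the monomial factorization, and your sketch faithfully reconstructs that argument (factor out the monomial on $(\R^*)^n$, then use the equality case of weighted AM--GM and full-dimensionality to pin down $(|v_1|,\dots,|v_n|)$ uniquely). One small remark: the nondegeneracy hypothesis enters exactly where you flag it, but you should phrase the implication as ``if $f$ is nondegenerate and has a zero in $(\R^*)^n$ then $\New(f)$ is full-dimensional,'' since nondegenerate by itself only means \emph{not} (infinitely many zeros \emph{and} non-full-dimensional); the scaling argument you allude to for the example $r(x_1,x_2)=(x_1^2-x_2)^2$ gives precisely this contrapositive.
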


\begin{cor}
	\label{Cor:CP located on boundary PSD - PROPER}
A proper circuit polynomial $f$ is located on the boundary of the cone of nonnegative polynomials, i.e., $f\in \partial P_{n,2d}$, if and only if $f_{\boldsymbol{\beta}}\in \{\pm \Theta_f\}$ and $\boldsymbol{\beta} \notin (2\N)^n$ or $f_{\boldsymbol{\beta}}=-\Theta_f$ and $\boldsymbol{\beta} \in (2\N)^n$. 	
\end{cor}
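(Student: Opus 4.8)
The plan is to deduce this corollary directly from Theorem~\ref{Thm:CircuitPolynomialNonnegativity} together with the characterization of the circuit number, by analyzing when the nonnegativity inequalities for $f$ become equalities. The key point is that for a proper circuit polynomial (so $r\geq 1$ and $\boldsymbol{\beta}$ lies in the relative interior of $\New(f)$), membership in $P_{n,2d}$ is governed entirely by $\Theta_f$ and the parity of $\boldsymbol{\beta}$, while membership in the boundary $\partial P_{n,2d}$ should correspond exactly to the extremal/boundary cases of those same inequalities.

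First I would fix a proper circuit polynomial $f$ as in \eqref{Equ:CircuitPolynomial} and recall from Theorem~\ref{Thm:CircuitPolynomialNonnegativity} that $f\in P_{n,2d}$ iff either ($\boldsymbol{\beta}\notin(2\N)^n$ and $|f_{\boldsymbol{\beta}}|\leq\Theta_f$) or ($\boldsymbol{\beta}\in(2\N)^n$ and $f_{\boldsymbol{\beta}}\geq-\Theta_f$). Since the outer coefficients $f_{\boldsymbol{\alp}(j)}$ are required to be strictly positive, the only free parameter that can push $f$ to the boundary of the cone is $f_{\boldsymbol{\beta}}$; varying the positive outer coefficients only strictly increases or decreases $\Theta_f$ while keeping $f$ in the interior if the defining inequality was strict. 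So I would argue: $f\in\partial P_{n,2d}$ iff $f\in P_{n,2d}$ and every neighborhood of $f$ contains a polynomial not in $P_{n,2d}$; the latter forces the relevant inequality from Theorem~\ref{Thm:CircuitPolynomialNonnegativity} to hold with equality, i.e.\ $f_{\boldsymbol{\beta}}\in\{\pm\Theta_f\}$ when $\boldsymbol{\beta}\notin(2\N)^n$, or $f_{\boldsymbol{\beta}}=-\Theta_f$ when $\boldsymbol{\beta}\in(2\N)^n$. For the converse, if the inequality is an equality then perturbing $f_{\boldsymbol{\beta}}$ slightly outward (increasing $|f_{\boldsymbol{\beta}}|$, resp.\ decreasing $f_{\boldsymbol{\beta}}$) leaves $P_{n,2d}$ immediately by the same theorem, so $f$ cannot be interior.

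The one subtlety to handle carefully is why this argument is valid in $P_{n,2d}$ rather than merely among circuit polynomials on the fixed support $A$: a small perturbation of $f$ in $\R[\mathbf x]_{n,2d}$ need not be a circuit polynomial. The cleanest way around this is to note that $f\in\partial P_{n,2d}$ is equivalent to $f$ having a nontrivial real zero (this is the standard fact that the boundary of the nonnegativity cone consists of forms/polynomials with a zero, which the excerpt invokes repeatedly), and that for a nondegenerate proper nonnegative circuit polynomial the existence of a zero in $(\R^*)^n$ is by the proof of Theorem~\ref{Thm:CircuitPolynomialNonnegativity} (via the AM--GM equality condition) equivalent precisely to $|f_{\boldsymbol{\beta}}|=\Theta_f$ in the odd case and $f_{\boldsymbol{\beta}}=-\Theta_f$ in the even case. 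Concretely: if $|f_{\boldsymbol{\beta}}|<\Theta_f$ (odd) or $f_{\boldsymbol{\beta}}>-\Theta_f$ (even), then $f$ is strictly positive on $(\R^*)^n$ and, being proper with $\boldsymbol{\beta}$ in the relative interior, has no zeros at all on the relevant locus, hence is interior; if equality holds, the AM--GM inequality used to prove nonnegativity is tight at a suitable point $\mathbf v\in(\R^*)^n$, producing a zero and hence boundary membership.

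I expect the main obstacle to be purely expository: making precise, with the right amount of detail, the equivalence between ``$f$ on the boundary of $P_{n,2d}$'' and ``$f$ has a real zero'' in the possibly non-full-dimensional or non-homogeneous setting, and explaining why for a \emph{proper} circuit polynomial having a zero is the same as the circuit-number equality — essentially a repackaging of the equality analysis inside the proof of \cite[Theorem 3.8]{Iliman:deWolff:Circuits}. Everything else is a direct case split on the parity of $\boldsymbol{\beta}$ plus an appeal to Theorem~\ref{Thm:CircuitPolynomialNonnegativity}, with Corollary~\ref{Cor:NumberAffineZeroesModified} available if one wants to pin down the location of the zero.
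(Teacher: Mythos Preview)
Your proposal is correct and matches the intended argument: the paper does not give its own proof here but simply refers to \cite[Corollaries~3.9 and~3.11]{Iliman:deWolff:Circuits}, whose proof is exactly the equality analysis of the AM--GM/Theorem~\ref{Thm:CircuitPolynomialNonnegativity} argument you describe, combined with the fact that $f\in\partial P_{n,2d}$ iff $f$ is nonnegative with a real zero.
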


In fact, by combinatorial arguments, we can refine the observation of an upper bound and state the exact number of zeros in dependence of the inner exponent. Recall that we always count distinct zeros. 

\begin{thm}
	The number of affine real zeros $\mathbf{v}$  in $(\R^*)^n$ of a proper  nondegenerate nonnegative circuit polynomial $f \in \partial C_{n,2d}$ is $2^n$ if $\boldsymbol{\beta} \in (2\N)^n$ and $2^{n-1}$ if $\boldsymbol{\beta} \notin (2\N)^n$.
	\label{Thm:ExactNumberAffineZeroes}
\end{thm}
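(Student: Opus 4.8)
The plan is to reduce everything to a single circuit polynomial $f$ on the boundary of $C_{n,2d}$ and count its zeros in $(\R^*)^n$ directly from the sign conditions in Theorem~\ref{Thm:CircuitPolynomialNonnegativity} together with the boundary characterization in Corollary~\ref{Cor:CP located on boundary PSD - PROPER}. By Corollary~\ref{Cor:NumberAffineZeroesModified} we already know the zeros all have the form $(\pm v_1,\dots,\pm v_n)$ with the $v_i$ fixed positive reals, so the whole question is: for which sign vectors $\boldsymbol{\sigma}\in\{\pm1\}^n$ is $(\sigma_1 v_1,\dots,\sigma_n v_n)$ actually a zero? Write $f=\sum_{j=0}^r f_{\boldsymbol{\alp}(j)}\mathbf{x}^{\boldsymbol{\alp}(j)} + f_{\boldsymbol{\beta}}\mathbf{x}^{\boldsymbol{\beta}}$. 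At such a point the outer terms are unchanged in sign because all $\boldsymbol{\alp}(j)$ are even (Condition (C1)), and each equals $f_{\boldsymbol{\alp}(j)}v^{\boldsymbol{\alp}(j)}>0$; the inner term picks up a sign $\prod_{i} \sigma_i^{\beta_i}$. So $f(\boldsymbol{\sigma}\ast\mathbf{v})=0$ forces the inner term to be negative (it must cancel a strictly positive sum), hence $\boldsymbol{\sigma}$ must satisfy $\sign(f_{\boldsymbol{\beta}})\cdot\prod_i\sigma_i^{\beta_i}=-1$, and conversely, when $f$ is on the boundary, the AM–GM equality case shows exactly those sign vectors give zeros.

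The key steps in order: (1) Fix the candidate positive point $\mathbf{v}=(v_1,\dots,v_n)$ coming from the AM–GM equality: this is the unique positive critical point where the weighted arithmetic and geometric means of the outer terms coincide, and on the boundary $|f_{\boldsymbol{\beta}}|=\Theta_f$ (or $f_{\boldsymbol{\beta}}=-\Theta_f$ in the even-$\boldsymbol{\beta}$ case), so $f$ vanishes there. (2) Show that plugging in a sign vector $\boldsymbol{\sigma}$ leaves all outer terms positive and equal to their values at $\mathbf{v}$, and multiplies the inner term by $\eta(\boldsymbol{\sigma}):=\prod_{i=1}^n\sigma_i^{\beta_i}\in\{\pm1\}$, so $f(\boldsymbol{\sigma}\ast\mathbf{v})=\big(\sum_j f_{\boldsymbol{\alp}(j)}v^{\boldsymbol{\alp}(j)}\big)+\eta(\boldsymbol{\sigma})f_{\boldsymbol{\beta}}v^{\boldsymbol{\beta}}$; by the equality case of weighted AM–GM this is $0$ iff $\eta(\boldsymbol{\sigma})f_{\boldsymbol{\beta}}<0$, i.e. iff $\eta(\boldsymbol{\sigma})=-\sign(f_{\boldsymbol{\beta}})$ (using $f_{\boldsymbol{\beta}}\ne0$ since $f$ is proper). (3) Count the sign vectors: the map $\boldsymbol{\sigma}\mapsto\eta(\boldsymbol{\sigma})$ is a group homomorphism $\{\pm1\}^n\to\{\pm1\}$ which depends only on the parities of the $\beta_i$. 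If $\boldsymbol{\beta}\in(2\N)^n$ then $\eta\equiv+1$, and on the boundary the even case forces $f_{\boldsymbol{\beta}}=-\Theta_f<0$, so $\sign(f_{\boldsymbol{\beta}})=-1=-\eta$ holds for \emph{all} $2^n$ sign vectors. If $\boldsymbol{\beta}\notin(2\N)^n$ then some $\beta_i$ is odd, $\eta$ is a nontrivial homomorphism, hence surjective with kernel of index $2$, so exactly $2^{n-1}$ sign vectors give the required value $-\sign(f_{\boldsymbol{\beta}})$. (4) Finally observe each such sign vector yields a \emph{distinct} point of $(\R^*)^n$ since all $v_i\ne0$, and conversely Corollary~\ref{Cor:NumberAffineZeroesModified} guarantees there are no other zeros in $(\R^*)^n$; the passage from $\partial P_{n,2d}$ to $\partial C_{n,2d}$ is legitimate because for a single circuit polynomial the two boundaries coincide (the SONC cone equals the nonnegativity cone on circuit polynomials), so Corollaries~\ref{Cor:NumberAffineZeroesModified} and \ref{Cor:CP located on boundary PSD - PROPER} apply verbatim.

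I expect the main obstacle to be the bookkeeping in step (2): one must be careful that the outer terms really are sign-invariant under all of $\{\pm1\}^n$ (this is exactly where evenness of the $\boldsymbol{\alp}(j)$ from (C1) is essential and must be invoked cleanly), and that the equality case of weighted AM–GM is being applied at precisely the point where it holds — namely that $\sum_j f_{\boldsymbol{\alp}(j)}v^{\boldsymbol{\alp}(j)} = \Theta_f v^{\boldsymbol{\beta}}$ with equality attained \emph{only} at the AM–GM-balanced positive point, which is why $\mathbf{v}$ is forced to be the one point of Corollary~\ref{Cor:NumberAffineZeroesModified} up to signs. A minor secondary point is handling the nondegeneracy and full-dimensionality hypotheses so that $\New(f)$ genuinely involves all $n$ coordinates and the "$v^{\boldsymbol{\alp}(j)}$ nonzero" reasoning is valid; these are exactly the hypotheses imported from the setup preceding the theorem, so they require only a sentence. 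Everything else is a routine homomorphism-counting argument.
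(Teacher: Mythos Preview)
Your proposal is correct and follows essentially the same approach as the paper: reduce to sign-flips of a single positive zero via Corollary~\ref{Cor:NumberAffineZeroesModified} and the boundary characterization, then count the sign vectors $\boldsymbol{\sigma}\in\{\pm1\}^n$ for which the inner monomial retains the required sign. The paper carries out this count as the binomial sum $2^{n-s}\sum_{k\text{ even}}\binom{s}{k}=2^{n-1}$ (with $s$ the number of odd entries of $\boldsymbol{\beta}$), whereas your group-homomorphism/kernel phrasing reaches the same $2^{n-1}$ a bit more cleanly, but the underlying idea is identical.
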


\begin{proof}
First note that for a single circuit polynomial $f$ we have $f\in \partial C_{n,2d}$ if and only if $f\in \partial P_{n,2d}$. 
By symmetry the number of zeros only depends on the inner exponent and every nonnegative circuit polynomial has at most one zero on every orthant, see also Corollary~\ref{Cor:CP located on boundary PSD - PROPER}. Thus, for $f\in \partial C_{n,2d}$ with $\boldsymbol{\beta}$ even we clearly have $|\cV^*(f)|=2^n$. If $\boldsymbol{\beta} \notin (2\N)^n$ we may assume without loss of generality $f_{\boldsymbol{\beta}}= - \Theta_f$. Hence a necessary condition for $f$ to have zeros is $\sgn(\mathbf{x}^{\boldsymbol{\beta}})=1$. Therefore only an even number of entries $\beta_i$ may be odd. Let $s$, $1\leq s \leq n$, be the number of odd entries in $\boldsymbol{\beta}$, then the number of affine real zeros is given by the subsequent basic calculation:
{
	\setlength{\belowdisplayskip}{-0.3cm}
\begin{align*}
2^{n-s} \cdot \sum_{k=0 \atop k \text{ even}}^s \binom{s}{k}\;=\; 2^{n-s} \cdot 2^{s-1} = 2^{n-1}.
\end{align*}
}
\end{proof}	

This result yields the following number of zeros for SONC polynomials.  

\begin{cor}
	\label{Cor:ExactNumberAffineZeroes_SONC}
	Let $p \in \partial C_{n,2d}\cap \partial P_{n,2d}$ and $p=\sum_{i=1}^{k} f_i$, where $f_i$ are proper nondegenerate nonnegative circuit polynomials for all $i$ with corresponding inner exponent $\boldsymbol{\beta}^{(i)}$. Then $|\cV^*(p)|=2^n$ if $\boldsymbol{\beta}^{(i)} \in (2\N)^n$ for all $i=1,\ldots,k$ and $1 \leq |\cV^*(p)|\leq 2^{n-1}$, with $|\cV^*(p)|\bigm| 2^{n-1}$ otherwise. In particular, if every $j$-th entry, $j=1,\ldots,n$, of each $\boldsymbol{\beta}^{(i)}$ coincides in whether $\beta_j^{(i)}$ is even or odd, then $|\cV^*(p)|= 2^{n-1}$. 
\end{cor}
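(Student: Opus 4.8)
The plan is to reduce everything to the single-circuit case via the elementary fact that a sum of nonnegative polynomials vanishes precisely where each summand does, and then to count the common zeros by solving a homogeneous linear system over $\F_2$. Since each $f_i$ is nonnegative, $\cV^*(p) = \bigcap_{i=1}^k \cV^*(f_i)$. We may assume $\cV^*(p) \neq \emptyset$ (this is implicit in the asserted lower bound $1 \leq |\cV^*(p)|$, and corresponds to restricting attention to finite zero sets in $(\R^*)^n$ as set up before Corollary~\ref{Cor:NumberAffineZeroesModified}); fix $\mathbf{v}^{(0)} \in \cV^*(p)$. Each $f_i$ is then a proper nondegenerate nonnegative circuit polynomial with a zero in $(\R^*)^n$, hence $f_i \in \partial P_{n,2d}$, equivalently $f_i \in \partial C_{n,2d}$, so Corollary~\ref{Cor:NumberAffineZeroesModified} and the proof of Theorem~\ref{Thm:ExactNumberAffineZeroes} apply to $f_i$: the set $\cV^*(f_i)$ consists exactly of the sign variants of a single vector, and since $\mathbf{v}^{(0)}$ lies in every $\cV^*(f_i)$ these absolute values agree across all $i$. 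Writing a sign variant as $\mathbf{v} = (\eps_1 v_1^{(0)}, \dots, \eps_n v_n^{(0)})$ with $\eps_j \in \{\pm 1\}$, the sign analysis in the proof of Theorem~\ref{Thm:ExactNumberAffineZeroes} shows that $\mathbf{v} \in \cV^*(f_i)$ is equivalent to $\sgn(\mathbf{v}^{\boldsymbol{\beta}^{(i)}}) = \sgn((\mathbf{v}^{(0)})^{\boldsymbol{\beta}^{(i)}})$, that is, to the single relation $\prod_{j\,:\,\beta^{(i)}_j \text{ odd}} \eps_j = 1$, which is vacuous exactly when $\boldsymbol{\beta}^{(i)} \in (2\N)^n$.

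Next I would set $\eps_j = (-1)^{\delta_j}$ with $\delta_j \in \F_2$, so that $\cV^*(p)$ is in bijection with the solution set in $\F_2^n$ of the homogeneous system of equations $\sum_{j\,:\,\beta^{(i)}_j \text{ odd}} \delta_j = 0$, one for each $i = 1, \dots, k$. If every $\boldsymbol{\beta}^{(i)}$ is even this system is empty, so $|\cV^*(p)| = 2^n$. Otherwise some equation is nontrivial, so the rank $\rho$ of the system satisfies $1 \leq \rho \leq n$ and $|\cV^*(p)| = 2^{n-\rho}$, which lies between $1$ and $2^{n-1}$ and divides $2^{n-1}$. If moreover, for every fixed $j$, the parity of $\beta^{(i)}_j$ is the same for all $i$, then in this case the index set $\{j : \beta^{(i)}_j \text{ odd}\}$ is a fixed nonempty set independent of $i$, so all nontrivial equations coincide, $\rho = 1$, and hence $|\cV^*(p)| = 2^{n-1}$.

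The linear-algebra part is routine; the step I expect to require the most care is the setup — verifying that the absolute-value vectors of the various $\cV^*(f_i)$ all coincide, so that $\cV^*(p)$ genuinely lives in one orbit $\{(\pm v_1^{(0)}, \dots, \pm v_n^{(0)})\}$ on which the $\F_2$-description is exact — together with making precise the standing assumption $\cV^*(p) \neq \emptyset$ and the reduction to circuit polynomials on $\partial P_{n,2d}$ for which Theorem~\ref{Thm:ExactNumberAffineZeroes} and Corollary~\ref{Cor:CP located on boundary PSD - PROPER} are available.
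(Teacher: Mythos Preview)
Your proposal is correct and follows the same approach as the paper: both reduce to $\cV^*(p)=\bigcap_i\cV^*(f_i)$ via nonnegativity of the $f_i$, observe that a common zero forces the absolute-value vectors of all the $\cV^*(f_i)$ to coincide so that the intersection is cut out inside one sign-orbit by the parity conditions on the $\boldsymbol{\beta}^{(i)}$, and then count. Your $\F_2$-linear packaging makes the divisibility $|\cV^*(p)|\mid 2^{n-1}$ immediate via $|\cV^*(p)|=2^{n-\rho}$, whereas the paper argues more tersely that two circuit polynomials sharing more than one zero share an even number; this is a difference in exposition rather than in idea.
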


\begin{proof}
	Since a SONC polynomial $p$ is a sum of nonnegative circuit polynomials $f_i$, $p$ is zero if and only if every summand $f_i$ is zero. Thus, the assertion follows by counting the common zeros of the $f_i$. Due to the special structure of the zeros of a circuit polynomial two nonnegative circuit polynomials $f_i$ and $f_l$, $i\neq l$, having more than one zero in common, have an even number of zeros in common. 
	If $f_i$ and $f_l$ have one zero in common and both $\boldsymbol{\beta}^{(i)}$ and $\boldsymbol{\beta}^{(l)}$ are even, then they have all their $2^n$ zeros in common.
	 Analogously, if both circuit polynomials have an odd inner exponent where each entry $\beta_j^{(i)}$ and $\beta_j^{(l)}$ coincide whether it is odd or even, then the zero set of $f_i$ and $f_l$ is identical, i.e., both polynomials have all their $2^{n-1}$ zeros in common.
\end{proof}	
\smallskip
The subsequent example illustrates the last case of Corollary \ref{Cor:ExactNumberAffineZeroes_SONC}.

\begin{example}
	Consider the bivariate SONC polynomial $p=f_1+f_2$ with the following two nonnegative circuit polynomials
	\begin{align*}
	f_1 \;&=\; 3/8 + 3/8\cdot x_1^4+ 1/4 \cdot x_1^2x_2^4 - x_1^2x_2 ,\\
	f_2 \;&=\; 1/8 + 1/2 \cdot x_1^4 + 3/8\cdot  x_2^8 - x_1^2x_2^3.
	\end{align*}
	Obviously, both $\beta_1^{(1)}=2$ and $\beta_1^{(2)}=2$ are even and both $\beta_2^{(1)}=1$ and $\beta_2^{(2)}=3$ are odd. Therefore, we have $\cV(f_1)=\cV(f_2)=\cV(p)=\{(1,1),(-1,1)\}$. Hence the number of zeros is $|\cV(p)|=2^{2-1}=2$. 
	\endexa
\end{example}

If we take also infinitely many zeros and sums of monomial squares into account we get the following result, which is a direct conclusion of the observations at the beginning of this section and  Corollary~\ref{Cor:ExactNumberAffineZeroes_SONC}.
\begin{cor}
	\label{Cor:InfiniteExactNumberAffineZeroes_SONC}
	Let $p \in \partial C_{n,2d}\cap \partial P_{n,2d}$ be given as in Corollary $\ref{Cor:ExactNumberAffineZeroes_SONC}$. 
	Generally, it is possible to have $|\cV(p)|=\infty$. If $\cV(p)$ is finite, then $|\cV(p)|=2^n$ or $|\cV(p)| = 2^n+1$ if $\boldsymbol{\beta}^{(i)} \in (2\N)^n$ for all $i=1,\ldots,k$, and otherwise $1\leq |\cV(p)|\leq 2^{n-1}+1$.
\end{cor}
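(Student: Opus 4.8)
First I would split the zero set as a disjoint union $\cV(p) = \cV^*(p)\,\cup\,\bigl(\cV(p)\setminus(\R^*)^n\bigr)$ and treat the two pieces independently. The first piece is controlled entirely by Corollary~\ref{Cor:ExactNumberAffineZeroes_SONC}: one has $|\cV^*(p)| = 2^n$ when every $\boldsymbol{\beta}^{(i)}$ is even, and $1\le|\cV^*(p)|\le 2^{n-1}$ otherwise. So the whole statement reduces to understanding the second piece, and in particular to showing that \emph{when $\cV(p)$ is finite} the second piece is contained in $\{\boldsymbol{0}\}$. The claim that $|\cV(p)|$ can be infinite (for $n\ge 2$) only requires one example satisfying the hypotheses: take $p = x_1^2\,g$, where $g$ is a full-dimensional proper nonnegative circuit polynomial on the boundary of its nonnegativity cone with $\deg(g)\le 2d-2$. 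Then $p$ is a proper nondegenerate nonnegative circuit polynomial --- it is a nonnegative circuit polynomial whose circuit number and inner coefficient equal those of $g$ (hence $p\in\partial C_{n,2d}\cap\partial P_{n,2d}$ by Corollary~\ref{Cor:CP located on boundary PSD - PROPER}), it is proper because $g$ is, and it is nondegenerate because its Newton polytope is full-dimensional --- yet it vanishes on all of $\{x_1=0\}$, exactly the phenomenon described at the beginning of this section.

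The key observation is a statement about a single proper nondegenerate nonnegative circuit polynomial $f$, with inner exponent $\boldsymbol{\beta}$ and outer exponents $\boldsymbol{\alp}(0),\dots,\boldsymbol{\alp}(r)$: \emph{if $f$ vanishes at a point $\mathbf{v}$ with $S\coloneqq\{\,j : v_j=0\,\}\neq\emptyset$, then $f$ vanishes identically on the coordinate subspace $L_S\coloneqq\{\,\mathbf{x} : x_j=0\text{ for all }j\in S\,\}$.} To prove it, recall that $\boldsymbol{\beta}=\sum_j\lambda_j\boldsymbol{\alp}(j)$ with all $\lambda_j>0$, so $\beta_j=0$ precisely when $\boldsymbol{\alp}(l)_j=0$ for every $l$. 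If no outer exponent had a positive coordinate in $S$, then $f$ would not involve any variable $x_j$ with $j\in S$, so $\New(f)$ would not be full-dimensional; but $f$ would then vanish on the infinite flat obtained from $\mathbf{v}$ by varying the $S$-coordinates, making $f$ degenerate --- contradiction. Hence some outer exponent, and therefore also $\boldsymbol{\beta}$, has a positive coordinate in $S$; in particular $\mathbf{v}^{\boldsymbol{\beta}}=0$, and $f(\mathbf{v})=\sum_{l\,:\,\supp(\boldsymbol{\alp}(l))\cap S=\emptyset} f_{\boldsymbol{\alp}(l)}\,\mathbf{v}^{\boldsymbol{\alp}(l)}$. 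Every surviving summand is strictly positive, since $\boldsymbol{\alp}(l)$ is even and $v_k\neq0$ for $k\notin S$; hence $f(\mathbf{v})=0$ forces this sum to be empty, i.e. every outer exponent of $f$ has a positive coordinate in $S$. Then every monomial of $f$ vanishes on $L_S$, which proves the observation. (If a summand is a sum of monomial squares rather than a proper circuit polynomial, the identical conclusion is immediate from positivity of the coefficients and evenness of the exponents.)

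Now I would assemble the pieces. Since $p=\sum_i f_i$ with each $f_i\ge0$, a point lies in $\cV(p)$ iff it lies in $\cV(f_i)$ for every $i$, so $\cV(p)=\bigcap_i\cV(f_i)$. Assume $\cV(p)$ is finite and choose $\mathbf{v}\in\cV(p)\setminus(\R^*)^n$ with $S=\{\,j:v_j=0\,\}\neq\emptyset$. By the observation each $f_i$ vanishes on $L_S$, hence so does $p$; if $S\subsetneq\{1,\dots,n\}$ then $\dim L_S = n-|S|\ge1$ and $p$ has infinitely many zeros, contradicting finiteness of $\cV(p)$. Thus $S=\{1,\dots,n\}$, i.e. $\mathbf{v}=\boldsymbol{0}$; and $\boldsymbol{0}\in\cV(p)$ exactly when $p$ has no constant term (equivalently, no $f_i$ does, since all $f_i\ge0$). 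Therefore $|\cV(p)|=|\cV^*(p)|+\varepsilon$ with $\varepsilon\in\{0,1\}$, and substituting the two cases of Corollary~\ref{Cor:ExactNumberAffineZeroes_SONC} yields $|\cV(p)|\in\{2^n,2^n+1\}$ when all $\boldsymbol{\beta}^{(i)}$ are even and $1\le|\cV(p)|\le 2^{n-1}+1$ otherwise.

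I expect the main obstacle to be the correct use of nondegeneracy in the key observation. An individual summand $f_i$ may genuinely have infinitely many zeros even though $\cV(p)$ is finite, so the argument has to be carried out at the level of entire coordinate subspaces $L_S$ rather than for isolated points; and the offending alternative (``$f$ does not involve the $S$-variables'') must be excluded precisely through the definition of degeneracy --- infinitely many zeros together with a non-full-dimensional Newton polytope. A secondary, more routine, point is the bookkeeping at the origin: verifying that $\boldsymbol{0}$ contributes exactly $0$ or $1$ to the count and checking that sum-of-monomial-square summands, if present, only reinforce the conclusion rather than weakening the bounds.
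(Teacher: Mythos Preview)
Your proof is correct and follows essentially the same route as the paper, which simply asserts that the corollary is ``a direct conclusion of the observations at the beginning of this section and Corollary~\ref{Cor:ExactNumberAffineZeroes_SONC}.'' Those observations are precisely the informal remarks about non-constant term circuit polynomials acquiring the origin as an extra zero and possibly infinitely many zeros on coordinate hyperplanes; your ``key observation'' makes this rigorous by showing that a zero of a proper nondegenerate circuit polynomial on a coordinate subspace $L_S$ with $S\neq\emptyset$ forces the polynomial to vanish identically on $L_S$, so that under the finiteness assumption the only possible boundary zero is the origin.

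In other words, the paper leaves the implication ``$\cV(p)$ finite $\Rightarrow$ $\cV(p)\setminus(\R^*)^n\subseteq\{\boldsymbol{0}\}$'' to the reader based on the earlier heuristic discussion, whereas you supply the missing argument in full. Two minor remarks: the parenthetical about sum-of-monomial-square summands is superfluous since the hypotheses already require each $f_i$ to be proper; and your example $p=x_1^2 g$ for the possibility $|\cV(p)|=\infty$ is a concrete instance of the Laurent factorization $f=\mathbf{x}^{-\boldsymbol{\alp}(j)}\cdot f^c$ that the paper invokes at the start of the section.
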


\medskip
At this point, we insert a brief discussion on the determination of the zeros. \\
In \cite{Iliman:deWolff:Circuits} it is shown that the zeros $\mathbf{v}\in \R^n$ of the nonnegative circuit polynomial $f\in C_{n,2d}$ with $\boldsymbol{\alp}(0)=0$ and $f_0=\lam_0$, i.e., $f=\lam_0 + \sum_{j=1}^{n} f_j \mathbf{x}^{\boldsymbol{\alpha}(j)} - \Theta_f \mathbf{x}^{\boldsymbol{\beta}}$, satisfy \linebreak $|v_j|=e^{s^*_j}$ for all $j=1,\ldots,n$, where $\mathbf{s}^* \in \R^n$ is the unique vector satisfying $e^{\langle \mathbf{s}^*, \boldsymbol{\alpha}(j) \rangle}=\frac{\lam_j}{f_j}$ for all $j$. Thus, $\mathbf{s}^*$ is given by a linear system of equations.\\
In the specific case that $\New(f)=\Delta_{n,2d}$, it is even possible to exactly specify the zeros of $f$. To be more precise, consider the nonnegative circuit polynomial $f\in C_{n,2d}$ with $f=\lam_0 + \sum_{j=1}^{n} f_j x_j^{2d} - \Theta_f \mathbf{x}^{\boldsymbol{\beta}}$. Then every entry $v_j$ of every zero $\mathbf{v}\in \R^n$ of this polynomial is given by $|v_j|=(\lam_j / f_j)^{1/(2d)}$, see also \cite[Lemma 4.1]{Dressler:Iliman:deWolff:REP}. \\
An interesting question is, whether it is possible to prescribe the zeros of nonnegative circuit polynomials in the general case.\\

We now analyze the numbers of zeros in the homogeneous case. Recall that depending on the polynomial $p$, the zero set $\cV(\ovl p)$ may have no, finitely many, or infinitely many additional zeros at infinity. Therefore, we now address ourselves to the task of determining the number of real zeros additionally appearing due to homogenizing SONC polynomials. 

In the affine case, we were mainly interested in finitely many zeros, which corresponds to investigating constant term circuit polynomials. In what follows, we distinguish between SONC forms arising from homogenizing a constant term SONC polynomial and those arising from homogenizing a non-constant term SONC polynomial. 

To begin with, we consider nonnegative circuit polynomials and their homogenizations. The following result on the additional zeros in the homogeneous case will be explained in great detail and clearly described including specific representatives in which the specific number of zeros arises. Note that by properness, see Lemma~\ref{Lem:RealizabilityCP}, not all possibilities of numbers of zeros are realizable for every degree, see also the comments in the proof below. Let $\struc{\ovl C_{n+1,2d}}$ denote the cone of SONC forms in $\R[x_0,\mathbf{x}]_{n+1,2d}$.

\begin{thm}
	Let $f \in \partial C_{n,2d}$ be a nonnegative circuit polynomial and $\ovl f \in \partial \ovl C_{n+1,2d}$ be its homogenization.
	\begin{enumerate}
		\item[$(1)$] For $n=2$, we have:
		\renewcommand{\labelenumii}{\textnormal{(\roman{enumii})}}
		\begin{enumerate}
			\item If $f$ has a constant term, then $|\cV(f)|=4$ and $|\cV(\ovl f)|\in \{4,5,6\}$  if $\boldsymbol{\beta}$ is even and $|\cV(f)|=2$ and $|\cV(\ovl f)| \in \{2,3,4\}$ if $\boldsymbol{\beta}$ is odd. 
			\item If $f$ does not have a constant term, then $|\cV(f)|=5$ or infinity and $|\cV(\ovl f)|\in \{5,6,7\}$  or infinity  if $\boldsymbol{\beta}$ is even and $|\cV(f)|=3$ or infinity and $|\cV(\ovl f)| \in \{3,4,5\}$  or infinity if $\boldsymbol{\beta}$ is odd.
		\end{enumerate}
		Consequently, if $\cV(f)$ is finite, then in both cases $\ovl f$ has at most $2$ zeros at infinity in addition. 		
		\item[$(2)$]  In the general case $n>2$ by homogenizing a constant term resp. a non-constant term circuit polynomial $f\in \partial C_{n,2d}$, $\ovl f$ has either up to $3$ resp. $2$ additional zeros at infinity, or else infinitely many. Again all intermediate cases may occur.
	\end{enumerate}
	\label{Thm:ExactNumberHomogZeroes}	
\end{thm}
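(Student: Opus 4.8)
\medskip
\noindent\textbf{Proof plan.}
The plan is to exploit the splitting $\cV(\ovl f)=\cV(f)\sqcup\cV(\ell)$, where $\ell(\mathbf x):=\ovl f(0,x_1,\dots,x_n)$ is the \emph{leading form} of $f$ (the sum of its degree-$2d$ terms), viewed as a form on $\P^{n-1}$: zeros of $\ovl f$ with $x_0\neq 0$ dehomogenize bijectively to $\cV(f)$, while those with $x_0=0$ are exactly the zeros of $\ell$ in $\P^{n-1}$. Hence $|\cV(\ovl f)|=|\cV(f)|+|\cV(\ell)|$ when both counts are finite, and $|\cV(\ovl f)|=\infty$ as soon as one of them is. For $|\cV(f)|$ I would quote the affine statements already proved: Theorem~\ref{Thm:ExactNumberAffineZeroes} gives $2^n$ (resp.\ $2^{n-1}$) for a proper nondegenerate constant-term $f\in\partial C_{n,2d}$ with $\boldsymbol{\beta}$ even (resp.\ odd), and Corollaries~\ref{Cor:ExactNumberAffineZeroes_SONC}--\ref{Cor:InfiniteExactNumberAffineZeroes_SONC} add the origin in the non-constant-term case, yielding $2^n+1$, resp.\ $2^{n-1}+1$, unless the affine zero set is infinite. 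For $n=2$ these are the numbers $4,2,5,3$ (or $\infty$) in part~(1), so everything reduces to counting $\cV(\ell)$. (If $f$ is homogeneous then $\cV(f)$ is a cone, and for proper $f$ on $\partial C_{n,2d}$ it contains a point of $(\R^*)^n$ by Corollary~\ref{Cor:NumberAffineZeroesModified}, so $\cV(f)$ is infinite and we are in the ``infinity'' alternative; this is also the only situation in which the inner term $\boldsymbol{\beta}$ can appear in $\ell$.)

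The engine for both parts is the structure of $\ell$. By Condition~(C1) of Definition~\ref{Def:CircuitPolynomial} the support of $\ell$ consists exactly of the vertices $\boldsymbol{\alp}(j)$ of $\New(f)$ of degree $2d$; these are \emph{even} exponents with positive coefficients, so $\ell$ is a sum of monomial squares. Consequently $\mathbf w\in\P^{n-1}$ lies in $\cV(\ell)$ iff each monomial of $\ell$ vanishes at $\mathbf w$; with $S_j:=\supp\boldsymbol{\alp}(j)$ this reads
\[
\cV(\ell)\;=\;\bigcap_{j\,:\,|\boldsymbol{\alp}(j)|=2d}\ \Big(\textstyle\bigcup_{i\in S_j}\{w_i=0\}\Big)\ \subseteq\ \P^{n-1},
\]
a finite union of coordinate subspaces, which I would then analyse separately for $n=2$ and $n>2$.

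For part~(1), $n=2$: here $\ell$ is either a single even monomial $c\,x_1^a x_2^b$ ($a+b=2d$) or an even binomial $c_1x_1^{a_1}x_2^{b_1}+c_2x_1^{a_2}x_2^{b_2}$ (three degree-$2d$ vertices would force $f$ homogeneous or degenerate). A single even monomial vanishes at $[1:0]$ and/or $[0:1]$, i.e.\ at one or two points; for the binomial, pulling out $x_1^{\min(a_i)}x_2^{\min(b_i)}$ leaves $c_1x_1^m+c_2x_2^m$ with $m=|a_1-a_2|=|b_1-b_2|$ \emph{even}, which has no real projective root, so again only $[1:0]$ and $[0:1]$ can occur — zero, one, or two points. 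Combined with the affine counts above, this gives the four ranges of part~(1) together with the ``consequently'' clause ($\ovl f$ picks up at most two zeros at infinity). That each of $0,1,2$ is attained I would verify by explicit representatives moving the degree-$2d$ vertices onto or off the coordinate axes: e.g.\ $1+x_1^{2d}+x_2^{2d}-\Theta_f\mathbf x^{\boldsymbol{\beta}}$ has $\ell=x_1^{2d}+x_2^{2d}$ and no zero at infinity, a single top vertex on an axis contributes one, and a top vertex in the relative interior of an edge of $\New(f)$ contributes two.

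For part~(2), $n>2$: the coordinate-subspace arrangement above is $0$-dimensional — hence $\cV(\ell)$ finite — precisely when no two-element subset of $\{1,\dots,n\}$ is disjoint from every $S_j$ (a hypergraph-covering condition), and then $\cV(\ell)$ is exactly the set of coordinate points $\mathbf e_i$ for which $(2d)\mathbf e_i$ is not among the $\boldsymbol{\alp}(j)$; otherwise $\cV(\ell)$ has positive dimension and is infinite. Since the $\boldsymbol{\alp}(j)$ are the vertices of a single simplex, and the vertex $\mathbf 0$ in the constant-term case (resp.\ a lower-degree vertex in the non-constant-term case, where finiteness of $\cV(f)$ additionally forbids a common monomial factor) does not lie on the degree-$2d$ face, at most $n$ vertices have degree $2d$; counting how many of them must be coordinate powers $(2d)\mathbf e_i$ and how many must carry pairwise distinct two-element supports then caps $|\cV(\ell)|$ by $3$ in the constant-term case and $2$ in the non-constant-term case, with all intermediate values realized by ``cyclic'' circuit polynomials whose top vertices are supported on consecutive coordinate pairs. \textbf{The hardest part will be this combinatorial core of part~(2)}: proving the finiteness criterion cleanly, extracting the sharp bounds in the two cases, and — while respecting the realizability restrictions of Lemma~\ref{Lem:RealizabilityCP} (which forbid certain vertex configurations in low degree) — constructing, in each admissible degree, circuit polynomials that attain every possible count.
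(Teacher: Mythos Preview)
Your approach is the paper's, organized more systematically: both separate $\cV(\ovl f)$ into affine zeros and zeros at infinity, identify the latter with $\cV(\ell)\subseteq\P^{n-1}$ for the leading form $\ell$ (a sum of monomial squares), and then analyse the supports $S_j$ of the degree-$2d$ vertices. The paper proceeds by explicit case enumeration of vertex configurations; your subspace-arrangement language is a cleaner packaging of the same analysis, and the count it suggests --- at least $(n-m)+\binom{m}{2}$ top-degree vertices are forced when $|\cV(\ell)|=m$ is finite --- is exactly the arithmetic behind the paper's contradiction ``$\binom{4}{2}+1=7$ outer terms''.

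Two points need fixing. First, your finiteness criterion is wrong as stated: ``no pair $\{a,b\}$ is disjoint from every $S_j$'' only says every pair \emph{meets} some $S_j$. Take $n=3$ with a single top vertex of support $S_1=\{1,2,3\}$, say $\ell=x_1^2x_2^2x_3^2$; every pair meets $S_1$, yet $\cV(\ell)$ is the union of the three coordinate lines in $\P^2$, hence infinite. The correct condition (which your subsequent identification of the finite zeros as coordinate points already presumes) is that for every pair $\{a,b\}$ some $S_j\subseteq\{a,b\}$. Second, the drop from $3$ to $2$ in the non-constant-term case does not come from vertex counting or from ``no common monomial factor''. The $m=3$ configuration --- coordinate powers $x_1^{2d},\dots,x_{n-3}^{2d}$, the three pairwise vertices on $\{n{-}2,n{-}1,n\}$, and one non-zero lower-degree vertex $\boldsymbol{\alp}(0)$ --- still has $n$ top-degree vertices and no common factor; what fails is finiteness of the \emph{affine} zero set. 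Pick $i$ with $\alp(0)_i>0$ and restrict $f$ to $\{x_i=0\}$: the inner term and $\boldsymbol{\alp}(0)$ vanish, and the surviving monomial squares have a zero locus containing a full coordinate line. This is precisely the paper's one-line remark ``since then $f$ already has infinitely many (affine) zeros'', and it is the missing ingredient in your sketch.
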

\vspace*{-0.2cm}
For the general case, we stress the fact that in contrast to the bivariate case, infinitely many additional zeros are also possible in the constant term case. That means if $\cV(f)$ is finite, $\ovl f$ may have infinitely many zeros. 

\begin{proof}
	Without loss of generality we assume $\New(f)$ to be $n$-dimensional for $f\in \partial C_{n,2d}$. 
	\begin{enumerate}
		\item[(1)] Independent of whether $f$ has a constant term or not, it is evident that if $\cV(f)$ is finite there are not more than two zeros at infinity, $[0:1:0]$ and $[0:0:1]$.
		\begin{enumerate}
			\item[(i)] The statements for the affine case follow directly from Theorem~\ref{Thm:ExactNumberAffineZeroes} and the assumption that $f$ has a constant term.
			 For the projective case first consider $f\in \partial C_{2,2d}$ with $\boldsymbol{\beta}$ even and its homogenization:
			\begin{align*}
			f(x_1,x_2) &=  f_0 + f_{\boldsymbol{\alp}(1)} x_1^{\alp_1(1)}x_2^{\alp_2(1)}+ f_{\boldsymbol{\alp}(2)} x_1^{\alp_1(2)}x_2^{\alp_2(2)} - \Theta_f x_1^{\beta_1}x_2^{\beta_2},\\
			\ovl f(x_0,\mathbf{x}) &= f_0 x_0^{2d}+ f_{\boldsymbol{\alp}(1)} \mathbf{x}^{\boldsymbol{\alp}(1)} x_0^{2d-|\boldsymbol{\alp}(1)|}+ f_{\boldsymbol{\alp}(2)} \mathbf{x}^{\boldsymbol{\alp}(2)} x_0^{2d-|\boldsymbol{\alp}(2)|} - \Theta_{\ovl f}\mathbf{x}^{\boldsymbol{\beta}} x_0^{2d-|\boldsymbol{\beta}|} \;.\hspace{-2cm}
			\end{align*}

			Since $\boldsymbol{\beta} \in \Int(\New(f))$, both $\beta_1$ and $\beta_2$ are non-zero and $2d-|\boldsymbol{\beta}|\neq 0$. Clearly, $\ovl f$ has at least $4$ zeros corresponding to the zeros of the affine circuit polynomial $f$. Thus, $4\leq |\cV(\ovl f)| \leq 6$.  The number of additional zeros of $\ovl f$ depends on the exponents of the monomials of $f$ corresponding to the vertices of $\New(f)$, i.e., the outer exponents of $f$. In what follows, we analyze this exponent structure in detail and point out which cases may occur. Observe that the origin is always a vertex of $\New(f)$ corresponding to the constant term. Consequently, it suffices to study the other two vertices $\boldsymbol{\alp}(1)$ and $\boldsymbol{\alp}(2)$ of $\New(f)$. Also note that at least one of these vertices has to be of full degree, i.e., $|\boldsymbol{\alp}(i)|=2d$ for at least one $i\in \{1,2\}$. There are exactly three cases. \linebreak
			Case 1: $|\cV(\ovl f)|=4$, i.e., $\ovl f$ has no additional zeros. This occurs only if one vertex of $\New(f)$ has full degree in $x_1$ and the other full degree in $x_2$. Without loss of generality we assume $\alp_1(1)=2d$ and $\alp_2(2)=2d$. Then:
			\begin{align*}
			f &=  f_0 + f_{\boldsymbol{\alp}(1)} x_1^{2d}+ f_{\boldsymbol{\alp}(2)} x_2^{2d} - \Theta_f x_1^{\beta_1}x_2^{\beta_2},\\
			\ovl f &= f_0 x_0^{2d}+ f_{\boldsymbol{\alp}(1)} x_1^{2d}+ f_{\boldsymbol{\alp}(2)} x_2^{2d} - \Theta_{\ovl f} x_1^{\beta_1}x_2^{\beta_2} x_0^{2d-|\boldsymbol{\beta}|} \;.
			\end{align*}
			Obviously, $\ovl f$ only has the zeros corresponding to those of $f$.\\
			Case 2: $|\cV(\ovl f)|=5$. This happens, when the outer exponents of $f$ are of one of the following three structures:
		One of the vertices is of full degree in exactly one variable, we may assume $\alp_1(1)=2d$, and for the second vertex 
			\begin{enumerate}
				\item[(a)] only the other variable $x_2$ occurs, but not of full degree, i.e., $\alp_1(2)=0$ and $\alp_2(2)<2d$, 
				\item[(b)] both variables occur but the vertex is not of full degree, hence $\alp_1(2)\neq 0$,  $\alp_2(2)\neq 0$, and $|\boldsymbol{\alp}(2)|<2d$, or
				\item[(c)] both variables occur and the vertex is of full degree, thus $\alp_1(2)\neq 0$,  $\alp_2(2)\neq 0$, and $|\boldsymbol{\alp}(2)|=2d$.
			\end{enumerate}
			For Case (a) observe that, with $\alp_2(2)<2d$,
			\begin{align*}
			f &=  f_0 + f_{\boldsymbol{\alp}(1)} x_1^{2d}+ f_{\boldsymbol{\alp}(2)} x_2^{\alp_2(2)} - \Theta_f x_1^{\beta_1}x_2^{\beta_2},\\
			\ovl f &= f_0 x_0^{2d}+ f_{\boldsymbol{\alp}(1)} x_1^{2d}+ f_{\boldsymbol{\alp}(2)} x_2^{\alp_2(2)}x_0^{2d-\alp_2(2)} - \Theta_{\ovl f} x_1^{\beta_1}x_2^{\beta_2} x_0^{2d-|\boldsymbol{\beta}|} \;.
			\end{align*}
			The additional zero is $[0:0:1]$. The other cases follow analogously.\\
			Case 3: $|\cV(\ovl f)|=6$. This case arises if one of the vertices is full-dimensional and of full degree, without loss of generality $|\boldsymbol{\alp}(1)|=2d$, and the other vertex exhibits one of the following structures:
			\begin{enumerate}
				\item[(a)] only one variable $x_1$ or $x_2$ occurs, but not of full degree, hence for \linebreak example $\alp_1(2)=0$ and $\alp_2(2)<2d$, 
				\item[(b)] both variables occur but the vertex is not of full degree, thus $\alp_1(2)\neq 0$,  $\alp_2(2)\neq 0$, and $|\boldsymbol{\alp}(2)|<2d$, or
				\item[(c)] both variables occur and this vertex is also of full degree, i.e., $\alp_1(2)\neq 0$,  $\alp_2(2)\neq 0$, and $|\boldsymbol{\alp}(2)|=2d$.
			\end{enumerate}
			Similarly as in Case 2, we consider only Case (c), the others follow analogously. For (c) we have:
			\begin{align*}
			f &=  f_0 + f_{\boldsymbol{\alp}(1)} \mathbf{x}^{\boldsymbol{\alp}(1)}+ f_{\boldsymbol{\alp}(2)}  \mathbf{x}^{\boldsymbol{\alp}(2)} - \Theta_f  \mathbf{x}^{\boldsymbol{\beta}},\\
			\ovl f &= f_0 x_0^{2d}+ f_{\boldsymbol{\alp}(1)} \mathbf{x}^{\boldsymbol{\alp}(1)} + f_{\boldsymbol{\alp}(2)}  \mathbf{x}^{\boldsymbol{\alp}(2)} - \Theta_{\ovl f}  \mathbf{x}^{\boldsymbol{\beta}} x_0^{2d-|\boldsymbol{\beta}|} \;.
			\end{align*}
			Obviously, the additional zeros are $[0:0:1]$ and $[0:1:0]$. \\
			To finish the proof of (1)(i), it remains to consider $f\in \partial C_{2,2d}$ with $\boldsymbol{\beta} \notin (2\N)^2$. However, here the reasoning is as for $\boldsymbol{\beta}$ even except that the number of zeros starts with $|\cV(\ovl f)|=2$. We point out one small difference for degree $4$. Despite the general existence of a proper bivariate circuit polynomial of degree $4$ with an odd inner exponent, there does not occur one with the exponent structure of Case 3, i.e., for two additional zeros. Because such a circuit polynomial would be only a sum of monomial squares since there is no inner term. Whereas there is no proper bivariate circuit polynomial with even inner exponent for degree 4. Hence, by homogenizing $f\in C_{2,4}$, we get at most one zero at infinity.
			\item[(ii)] This statement follows immediately from (i) and the considerations at the beginning of Section~\ref{Sec:RealZeros}.
		\end{enumerate}
		%
			%
		\vspace*{2mm}
		\item[(2)] First of all we show that even a homogenized constant term nonnegative circuit polynomial $\ovl f \in \partial \ovl C_{n+1,2d}$ with $n\geq 3$  possibly has infinitely many zeros at infinity. For some $\ovl f \in \partial \ovl C_{n+1,2d}$ the additional zeros are of the form $[0:0:v_2:v_3:\cdots:v_n]$, where,  without loss of generality, we assumed $v_1=0$. The existence of such a case can easily be seen by the following examples. Let $f\in \partial C_{n,2d}$ be either of the form
		\begin{align*}
		f =  f_0 + f_{\boldsymbol{\alp}(1)}x_1^{2d}+ f_{\boldsymbol{\alp}(2)}  x_1^{\alp_1(2)}x_2^{2d-\alp_1(2)}+\cdots+f_{\boldsymbol{\alp}(n)}  x_1^{\alp_1(n)}x_n^{2d-\alp_1(n)} - \Theta_f  \mathbf{x}^{\boldsymbol{\beta}} , \hspace{-1cm}
		\end{align*}
		with $\alp_1(i) \in 2\N^*$ and $\alp_1(i)<2d$ \;for\; $i= 2,\ldots,n$. Or	
		\begin{align*}
		f =  f_0 + f_{\boldsymbol{\alp}(1)}x_1^{2d}+ f_{\boldsymbol{\alp}(2)}  x_2^{<2d}+\cdots+f_{\boldsymbol{\alp}(n)}x_n^{<2d} - \Theta_f  \mathbf{x}^{\boldsymbol{\beta}},
		\end{align*}
		 where \struc{$x_i^{<2d}$} abbreviates $x_i^{\alp_i(i)}$ with $\alp_i(i)<2d$.
		Obviously, in both examples the zeros of $\ovl f$ are $[0:0:v_2:\cdots:v_n]$, with $(v_2,\ldots,v_n) \in \R^{n-1}\setminus \{\boldsymbol{0}\}$. Thus $|\cV(\ovl f)|=\infty$. \\
		We now study again the additional zeros of $\ovl f\in \partial \ovl C_{n+1,2d}$. In what follows, we only discuss the constant term case. But we show that by homogenizing a non-constant term circuit polynomial $f$, the form $\ovl f$ cannot have three zeros at infinity. We proceed as in (1) by analyzing the vertex constellations of $\New(f)$, where, by assumption, we exclude the origin from consideration. Here, there are four cases.\\
		Case 1: $|\cV(\ovl f)|=|\cV(f)|$. Like for $n=2$ the only way where no zeros at infinity appear is if all vertices of $\New(f)$, $f\in C_{n,2d}$, are of full degree in one variable each. \linebreak
		Case 2: $|\cV(\ovl f)|=|\cV(f)|+1$. This happens if all but one vertex  of $\New(f)$ is of full degree in one different variable and the last vertex consists either of the not yet appearing variable but not of full degree or of various variables optional if of full degree or not. For instance, consider $f\in \partial C_{n,2d}$ with
		\begin{align*}
		f =  f_0 + f_{\boldsymbol{\alp}(1)} x_1^{2d}+ f_{\boldsymbol{\alp}(2)} x_2^{2d}+\cdots+ f_{\boldsymbol{\alp}(n)} x_n^{<2d} - \Theta_f \mathbf{x}^{\boldsymbol{\beta}}. 
		\end{align*}
		By homogenizing $f$, we get the additional zero $[0:0:\cdots:0:v_n]$, $v_n \in \R^*$.\\
		Case 3: $|\cV(\ovl f)|=|\cV(f)|+2$. Here, $n-1$ of the non-origin vertices of $\New(f)$ have to be of full degree in one variable, whereas for the remaining two vertices different exponent structures are possible. For example, consider
		\begin{align*}
		f =  f_0 + f_{\boldsymbol{\alp}(1)} x_1^{2d}+ \cdots +f_{\boldsymbol{\alp}(n-2)} x_{n-2}^{2d}+ f_{\boldsymbol{\alp}(n-1)} x_{n-1}^{<2d} +f_{\boldsymbol{\alp}(n)} x_{n-1}^{\alp} x_n^{2d-\alp} - \Theta_f \mathbf{x}^{\boldsymbol{\beta}}, \hspace{-1cm}
		\end{align*}
		where $\alp \in 2\N^*$ and $\alp<2d$.
		The homogenization $\ovl f$ has the two supplementary zeros $[0:0:\cdots:0:v_{n-1}:0]$ and $[0:0:\cdots:0:v_n]$ with $v_{n-1},v_n \in \R^*$.\\
		Case 4: $|\cV(\ovl f)|=|\cV(f)|+ 3$. This case arises if three vertices of $\New(f)$ are of full degree in two variables, which use three variables pairwise. All other vertices are of full degree in one different variable each:
		\begin{gather*}
		f =  f_0 + f_{\boldsymbol{\alp}(1)} x_1^{2d}+ \cdots +f_{\boldsymbol{\alp}(n-3)} x_{n-3}^{2d}  \\
		+ f_{\boldsymbol{\alp}(n-2)}x_{n-2}^{\alp}x_{n-1}^{2d-\alp} + f_{\boldsymbol{\alp}(n-1)} x_{n-1}^{\delta}x_n^{2d-\delta} + f_{\boldsymbol{\alp}(n)} x_n^{\gamma} x_{n-2}^{2d-\gamma}  - \Theta_f \mathbf{x}^{\boldsymbol{\beta}}, 
		\end{gather*}
		where $\alp, \delta,\gamma \in 2\N^*$ and each is strictly smaller than $2d$.
		The three additional zeros of $\ovl f$ are $[0:0:\cdots:0:v_{n-2}:0:0]$, $[0:0:\cdots:0:v_{n-1}:0]$, and $[0:0:\cdots:0:v_n]$, with $v_{n-2},v_{n-1},v_n \in \R^*$. Obviously, this case cannot occur if $f$ does not have a constant term, since then $f$ already has infinitely many (affine) zeros.\\   
		It remains to show that for finite $\cV(\ovl f)$, the number of additional zeros is bounded by $3$. We proceed by contradiction to prove that we may exclude the case of $4$ additional zeros, all other  follow analogously. 
		Note that in this case necessarily $n\geq 4$.\linebreak Thus, to show the $n$-variate case, it suffices to exclude that $\ovl f$ gets $4$ additional zeros for  $f\in C_{4,2d}$. Suppose $f\in C_{4,2d}$ with $|\cV(\ovl f)|=|\cV(f)|+4$. The additional zeros are $[0:1:0:0:0]$, $[0:0:1:0:0]$, $[0:0:0:1:0]$, and $[0:0:0:0:1]$. In order for such a zero set to exist, one part of $f$ has to be a sum of monomials, where every term consists of a product of two variables each $x_ix_j$, $ i,j=\{1,\ldots,4\},i\neq j,$ and all pairings have to appear. Each summand has to be of full degree. To receive a circuit polynomial, the mentioned terms of variable pairings together with the origin form the outer terms of $f$ and $x_1^{\beta_1}\cdots x_4^{\beta_4}$ forms the inner term. Hence, there are $\binom{4}{2}+1=7$ outer terms, which is a contradiction to $f$ being a circuit polynomial, because $f$ may only have up to $5$ outer terms. 
	\end{enumerate}
	\vspace{-\topsep}\vspace{-\topsep}
\end{proof}	
\vspace*{1mm}
\begin{remark}
	The considerations of the final step in the last proof are in line with the possibility to receive $3$ additional zeros in the constant term case. Following the train of thought starting with $f\in C_{3,2d}$, we get $\binom{3}{2}+1=4$ outer terms. This is consonant to the number of vertices of a simplex.
\end{remark}
\vspace*{1mm}
A direct corollary for SONC forms can be drawn from the above arguments.
\enlargethispage{\baselineskip}
\begin{cor}
	Let $p \in \partial C_{n,2d}\cap \partial P_{n,2d}$ be a SONC polynomial with $p=\sum_{i=1}^{k} f_i$, where $f_i$ are proper nonnegative circuit polynomials for all $i$ with corresponding inner exponent $\boldsymbol{\beta}^{(i)}$. Consider the homogenization $\ovl p \in \partial \ovl C_{n+1,2d}\cap \partial \ovl P_{n+1,2d}$.
	\begin{enumerate}
		\item[$(1)$] For $n+1=3:$
		\renewcommand{\labelenumii}{\textnormal{(\roman{enumii})}}
		\begin{enumerate}
			\item If $p$ has a constant term, then $|\cV(p)| \leq |\cV(\ovl p)|\leq |\cV(p)|+2$. More precisely,  if $\boldsymbol{\beta}^{(i)} \in (2\N)^2$ for all $i=1,\ldots,k$, then 
			$4 \leq |\cV(\ovl p)|\leq 6$, if every $j$-th entry of each $\boldsymbol{\beta}^{(i)}$ coincides in whether $\beta_j^{(i)}$ is even or odd, then $2 \leq |\cV(\ovl p)|\leq 4$, and otherwise $1 \leq |\cV(\ovl p)|\leq 4$. In all three cases the given bounds are sharp and the intermediate cases occur, with exception of $2d=4$, where the upper bound for the second and the third case is $3$. 
			\item If $p$ does not have a constant term, then each $f_i$ is a non-constant term circuit polynomial and either $|\cV(p)|\leq |\cV(\ovl p)|\leq |\cV(p)|+2$ or $|\cV(\ovl p)|= \infty$.\linebreak More precisely, if $\cV(p)$ is finite then we have three cases: If $\boldsymbol{\beta}^{(i)} \in (2\N)^2$ for all $i=1,\ldots,k$, then $5 \leq |\cV(\ovl p)|\leq 7$,
			if every $j$-th entry of each $\boldsymbol{\beta}^{(i)}$ coincides in whether $\beta_j^{(i)}$ is even or odd, then $3 \leq |\cV(\ovl p)|\leq 5$, and  otherwise $1 \leq |\cV(\ovl p)|\leq 5$.
		Again, the bounds are sharp and the intermediate cases occur. The only exception is for degree $4$, where the number of zeros is either $2$ or $3$.
		\end{enumerate}	
		\item[$(2)$] If $n+1\geq 4$ either
		\renewcommand{\labelenumii}{\textnormal{(\alph{enumii})}}
		\begin{enumerate}
			\item $|\cV(p)|\leq |\cV(\ovl p)|\leq |\cV(p)|+3$. In particular: 
			$2^n \leq |\cV(\ovl p)|\leq 2^n+3$ if  $\boldsymbol{\beta}^{(i)} \in (2\N)^{n}$ for all $i=1,\ldots,k$ and $2^{n-1} \leq |\cV(\ovl p)|\leq 2^{n-1}+3$ if every $j$-th entry of each $\boldsymbol{\beta}^{(i)}$ coincides in whether $\beta_j^{(i)}$ is even or odd, or 
			\item $|\cV(\ovl p)|= \infty$.
		\end{enumerate}
		The bounds of $(a)$ are sharp as well and all intermediate cases can occur.
	\end{enumerate}
	\label{Cor:NumbersZerosHomoSONC}
\end{cor}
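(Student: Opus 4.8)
\medskip
The plan is to reduce the statement to the single-circuit results of this section by a bookkeeping argument. Write $p=\sum_{i=1}^k f_i$ and homogenize each summand to degree $2d$ (padding with an even power of $x_0$ when $\deg f_i<2d$); then $\ovl p=\sum_{i=1}^k\ovl{f_i}$ and, by Proposition~\ref{Prop:SONC homo preserved}, every $\ovl{f_i}$ is nonnegative on $\R^{n+1}$. A sum of nonnegative forms vanishes exactly where every summand vanishes, so
\[
\cV(\ovl p)\;=\;\bigcap_{i=1}^k\cV(\ovl{f_i}).
\]
On the chart $\{x_0\neq 0\}$ this intersection is in bijection with $\cV(p)$, while its trace on $\{x_0=0\}$ is the real projective zero set in $\P^{n-1}$ of the top-degree part $p^{\mathrm{top}}:=\sum_{|\boldsymbol{\alp}|=2d}p_{\boldsymbol{\alp}}\mathbf{x}^{\boldsymbol{\alp}}$ of $p$. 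Because the outer exponents of a circuit polynomial are even, each $f_i^{\mathrm{top}}$, hence $p^{\mathrm{top}}$, is a sum of monomial squares, so this trace is a union of coordinate subspaces of $\P^{n-1}$; writing $Z$ for the number of its points, we get $|\cV(\ovl p)|=|\cV(p)|+Z$.

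First I would read off $|\cV(p)|$ from Corollaries~\ref{Cor:ExactNumberAffineZeroes_SONC} and~\ref{Cor:InfiniteExactNumberAffineZeroes_SONC}: it equals $2^n$ in the constant-term case and $2^n+1$ (when finite) in the non-constant-term case if all $\boldsymbol{\beta}^{(i)}$ are even; $2^{n-1}$, resp.\ $2^{n-1}+1$, if the parity of $\beta_j^{(i)}$ agrees over $i$ in every coordinate $j$; and $1\le|\cV(p)|\le 2^{n-1}$, resp.\ $\le 2^{n-1}+1$, otherwise (if $\cV(p)$ is infinite the claimed bound is vacuous). Next I would bound $Z$. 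By Theorem~\ref{Thm:ExactNumberHomogZeroes} each $\ovl{f_i}$ has at most $2$ zeros at infinity for $n=2$ and at most $3$ for $n\ge 3$ unless it has infinitely many; hence, if some $\ovl{f_i}$ has finitely many zeros at infinity, $Z\le|\cV(\ovl{f_i})\cap\{x_0=0\}|\le 3$ (resp.\ $\le 2$), and combining with the above values of $|\cV(p)|$ gives exactly the ranges in (1) and (2)(a), with the degree-$4$ exceptions for $n+1=3$ inherited verbatim from Theorem~\ref{Thm:ExactNumberHomogZeroes}(1). In the remaining situation every degree-$2d$ summand already has a positive-dimensional locus at infinity; here I would argue directly with the monomial variety $\bigcap_{\boldsymbol{\alp}}\bigl(\bigcup_{l\in\supp(\boldsymbol{\alp})}\{x_l=0\}\bigr)$, the outer intersection ranging over the exponents $\boldsymbol{\alp}$ of $p^{\mathrm{top}}$: it is either positive-dimensional, so $|\cV(\ovl p)|=\infty$ and we are in the second alternative of (1)(ii) and (2)(b), or it consists of at most $3$ coordinate points. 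The point behind the last bound is that isolating $m$ of the coordinate points $[0:\mathbf{e}_l]$ requires a degree-$2d$ ``two-variable'' outer monomial for each of the $\binom{m}{2}$ pairs, and all such monomials lie in the $3$-dimensional slice $\{|\mathbf{x}|=2d\}\cap\{x_l=0:\ l\notin S\}$, so no single Newton simplex can carry more than $4$ of them; establishing that this really forces $m\le 3$ is the delicate point.

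For sharpness and the realizability of all intermediate values I would first observe that $k=1$ already settles the even-$\boldsymbol{\beta}$ case and the equal-parity case, since a proper nondegenerate nonnegative circuit polynomial on $\partial C_{n,2d}$ is such a $p$ and the proof of Theorem~\ref{Thm:ExactNumberHomogZeroes} provides explicit representatives whose homogenizations have $0$, $1$, $2$ (and, for $n\ge 3$, $3$) zeros at infinity. For the ``otherwise'' case, which occurs only for $n+1=3$ with $|\cV(p)|\in\{1,2\}$, one takes $k=2$: two nonnegative circuit polynomials whose (odd) inner exponents differ in coordinate parity, so as to land in the ``otherwise'' case of Corollary~\ref{Cor:ExactNumberAffineZeroes_SONC}, and whose finite zero sets in $(\R^*)^2$ --- computable explicitly via $|v_j|=(\lambda_j/f_j)^{1/(2d)}$ as recalled just before the corollary --- are arranged to meet in a single point, while the vertex configurations of their Newton polytopes are chosen, along the lines of the constructions in Theorem~\ref{Thm:ExactNumberHomogZeroes}, to produce $0$, $1$, or $2$ common zeros at infinity; these two prescriptions are independent, the finite zeros depending only on coefficients and inner exponents and the zeros at infinity only on the outer exponents.

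The step I expect to be the main obstacle is this last one: controlling $Z$ when several summands each already carry a positive-dimensional component at infinity. One has to show that the intersection of the associated unions of coordinate subspaces in $\P^{n-1}$ is always either positive-dimensional (whence $|\cV(\ovl p)|=\infty$) or at most $3$ points, and in particular that no admissible family of Newton simplices produces a finite-but-larger intersection; this is exactly where the bound of $n+1$ outer terms of a circuit polynomial is used, as in the pigeonhole count ($\binom{4}{2}+1=7>5$) that closes the proof of Theorem~\ref{Thm:ExactNumberHomogZeroes}.
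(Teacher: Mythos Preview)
Your approach is exactly what the paper intends. The paper's own ``proof'' of this corollary is the single sentence ``A direct corollary for SONC forms can be drawn from the above arguments,'' leaving the reader to combine Theorem~\ref{Thm:ExactNumberHomogZeroes} with Corollaries~\ref{Cor:ExactNumberAffineZeroes_SONC} and~\ref{Cor:InfiniteExactNumberAffineZeroes_SONC}. Your decomposition $\cV(\ovl p)=\bigcap_i\cV(\ovl{f_i})$, the split $|\cV(\ovl p)|=|\cV(p)|+Z$ into affine zeros and zeros at infinity, and the appeal to the single-circuit results for each piece carry out precisely that assembly, with considerably more care than the paper itself provides.

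You have also put your finger on the genuinely delicate step, and here you go beyond the paper rather than fall short of it. The pigeonhole count $\binom{4}{2}+1=7>n+1$ at the end of the proof of Theorem~\ref{Thm:ExactNumberHomogZeroes} bounds the number of outer terms of a \emph{single} circuit; for a sum $p=\sum_i f_i$ there is no global cap on the total collection of degree-$2d$ outer monomials, so nothing a priori prevents distributing the $\binom{4}{2}$ ``pair'' monomials $x_i^a x_j^b$ across several $f_i$ so that the zero locus of $p^{\mathrm{top}}$ in $\P^{n-1}$ consists of four isolated coordinate points. Your observation that $Z\le 3$ follows as soon as \emph{one} $\ovl{f_i}$ has a finite locus at infinity is correct and handles most configurations, but the residual situation---every degree-$2d$ summand individually carries a positive-dimensional locus at infinity, yet the intersection is finite---is not settled either by your sketch or by the paper's one-line deferral. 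One minor side remark: your assertion that $p^{\mathrm{top}}$ is always a sum of monomial squares tacitly assumes that no $f_i$ is itself homogeneous of degree $2d$, since otherwise the inner term of that $f_i$ survives into $p^{\mathrm{top}}$.
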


\begin{remark}
	In particular, we point out that in contrast to a SONC polynomial with a constant term its homogenization may have infinitely many zeros (at infinity) even in the case $n+1=3$. Furthermore, the properness condition in Corollary~\ref{Cor:NumbersZerosHomoSONC} is necessary since we also take lower bounds on the zero set into account. 
\end{remark}

\smallskip
Before we proceed to analyze consequences of the results on real zeros of SONC polynomials, we provide some explicit examples demonstrating the considered cases in the proof of Theorem~\ref{Thm:ExactNumberHomogZeroes}.
\vspace*{0.15cm}
\begin{example}
	~\vspace*{-0.15cm}	\begin{enumerate}
		\item[(i)] First we give an example for $|\cV(\ovl f)|=|\cV(f)|$. Let $f\in C_{2,4}$ be the following nonnegative circuit polynomial
		\[
		f=\frac{1}{2} + x_1^4 + x_2^4 - 2x_1x_2.
		\]
		The zeros of $f$ are $\mathbf{v}_1=\lf(\frac{1}{\sqrt{2}},\frac{1}{\sqrt{2}}\ri)$ and $\mathbf{v}_2=\lf(-\frac{1}{\sqrt{2}},-\frac{1}{\sqrt{2}}\ri)$. Homogenizing $f$ yields $\ovl f= \frac{1}{2}x_0^4 + x_1^4 + x_2^4 - 2x_1x_2x_0^2$ and $\cV(\ovl f)=\lf\{\lf[1:\frac{1}{\sqrt{2}}:\frac{1}{\sqrt{2}}\ri],\lf[1:-\frac{1}{\sqrt{2}}:-\frac{1}{\sqrt{2}}\ri]\ri\}$.
		\item[(ii)] For $|\cV(\ovl f)|=|\cV(f)|+1$ consider 
		\[
		f= \frac{1}{3}+\frac{1}{6}x_1^6+\frac{1}{2}x_1^2x_2^4 - x_1^2x_2^2,
		\]
		its $4$ zeros are $(\pm 1,\pm 1)$. Then $\ovl f= \frac{1}{3}x_0^6+\frac{1}{6}x_1^6+\frac{1}{2}x_1^2x_2^4 - x_1^2x_2^2x_0^2$, which has the additional zero at infinity $[0:0:1]$.
		\item[(iii)] The Motzkin polynomial provides an example for the case $|\cV(\ovl f)|=|\cV(f)|+2$: 
		\[
		f=1+x_1^4x_2^2+x_1^2x_2^4-3x_1^2x_2^2,
		\]
		with zeros $(\pm 1,\pm 1)$. The Motzkin form $\ovl f= x_0^6+x_1^4x_2^2+x_1^2x_2^4-3x_1^2x_2^2x_0^2$ additionally has the zeros  $[0:1:0]$ and $[0:0:1]$.\\
		An example of a non-constant term circuit polynomial for this instance is the Choi-Lam polynomial
		\[
		S=x_1^4x_2^2+x_2^4+x_2^2-3x_1^2x_2^2,
		\]
		with zero set $\cV(S)=\{(1,1),(1,-1),(-1,1),(-1,-1),(0,0)\}$, see  \cite{Choi:Lam:OldHilbertQuestion,Choi:Lam:ExtremalpsdForms}. Its homogenization is $\ovl S= x_1^4x_2^2+x_0^2x_2^4+x_0^4x_2^2-3x_0^2x_1^2x_2^2$ and has $[0:0:1],[0:1:0]$ as additional zeros.
		\item[(iv)] The subsequent polynomial $f\in C_{3,8}$ serves as an example for $|\cV(\ovl f)|=|\cV(f)|+3$:
		\[
		f=5+x_1^4x_2^4+x_2^4x_3^4+x_1^4x_3^4-8x_1x_2x_3.
		\]
		Here $\cV(f)=\lf\{(1,1,1),(1,-1,-1),(-1,1,-1),(-1,-1,1) \ri\}$. The homogenization $\ovl f= 5x_0^8+x_1^4x_2^4+x_2^4x_3^4+x_1^4x_3^4-8x_1x_2x_3x_0^5$ additionally has the following zeros $[0:1:0:0]$, $[0:0:1:0]$, and $[0:0:0:1]$.
		\item[(v)] Finally, we provide an example for $|\cV(\ovl f)|=\infty$ if $\cV(f)$ is finite. Via homogenizing
		\[
		f= 1+x_1^4x_2^2x_3^2+x_1^2x_2^4x_3^2+x_1^2x_2^2x_3^4-4x_1^2x_2^2x_3^2,
		\]
		the form $\ovl f$ has the following additional zeros $[0:0:v_2:v_3]$, $[0:v_1:0:v_3]$, and $[0:v_1:v_2:0]$ with $(v_i,v_j)\in \R^2\setminus \{\boldsymbol{0}\}, i\neq j$. Thus, $\ovl f $ has infinitely many zeros. 
	\end{enumerate}
	\endexa
\end{example}

\subsection{Consequences of the zero statements}
\label{SubSec:RealZeros_B-numbers}

In this subsection we discuss an interesting property of SONC forms resulting from the knowledge about their real zeros.

In \cite{CLR:realzeros} Choi, Lam, and Reznick considered the numbers $B_{n+1,2d}$  and $B'_{n+1,2d}$, defined as $\sup|\cV(\ovl p)|$, where $\ovl p$ ranges over all forms in $\ovl P_{n+1,2d}$ and $\ovl \Sigma_{n+1,2d}$ respectively, with $\sup|\cV(\ovl p)|<\infty$, see also Section~\ref{SubSec:Prelim_PSD-SOS}. The authors noticed that the determination of these numbers is quite challenging and presented some partial results. Moreover, they observed that for general $n$ and $d$ it is unclear if $B_{n+1,2d}$ always needs to be finite. See also Theorem~\ref{Thm:B-numbersSOS&PSD} for results in special cases.
Inspired by this, we define an analog number for SONC forms.
\begin{definition}
	\label{Def:B''}
{
		\setlength{\belowdisplayskip}{-0.35cm}
	\begin{align*}
	\struc{B''_{n+1,2d}} \coloneqq \sup_{\substack{\ovl p\in \ovl C_{n+1,2d}\\|\cV(\ovl p)| < \infty}}|\cV(\ovl p)|.
	\end{align*}
}
	\endexa	
\end{definition}	

A crucial difference to the numbers $B_{n+1,2d}$  and $B'_{n+1,2d}$ is that in our case such a number $B''_{n+1,2d}$ is \emph{always finite} and actually can be given \emph{explicitly}.

\begin{thm}
	Let $B''_{n+1,2d}$ be as in Definition~$\ref{Def:B''}$, then:
	\renewcommand{\labelenumi}{\textnormal{(\arabic{enumi})}}
	\begin{enumerate}
		\item Special case $d=1:$ $B''_{2,2}=1$. 
		\item $B''_{2,4}=2$, $B''_{2,6}=3$, and $B''_{2,2d}=4$ for $2d \geq 8$.
		\item $B''_{3,4}=3$ and $B''_{3,2d}=7$ for $2d \geq 6$.
		\item For all $n+1\geq 4:$   $B''_{n+1,2d}=2^{2d-2}+3$ for $2d<n+1$, $B''_{n+1,2d}=2^{n-1}+3$ if $n+1 \leq 2d < 2(n+1)$, and $B''_{n+1,2d}=2^{n}+3$ for $2(n+1)\leq 2d$.
	\end{enumerate}
	\label{Thm:B''}	
\end{thm}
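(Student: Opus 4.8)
The plan is to reduce the computation of $B''_{n+1,2d}$ to a finite optimization over SONC forms with finitely many zeros, using the zero-counting results already established, and then verify the claimed values case by case via explicit witnessing forms together with matching upper bounds. First I would reduce to the boundary: any $\ovl p\in\ovl C_{n+1,2d}$ with a nontrivial zero lies in $\partial\ovl C_{n+1,2d}$, and by Proposition~\ref{Prop:SONC homo preserved} it is the homogenization of a SONC polynomial $p\in\partial C_{n,2d}$. If $p$ itself has finitely many zeros (the constant-term case, up to the Laurent reduction discussed at the start of Section~\ref{Sec:RealZeros}), then the total projective zero count of $\ovl p$ is the affine count $|\cV^*(p)|$ plus the zeros at infinity. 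By Corollary~\ref{Cor:ExactNumberAffineZeroes_SONC} the affine count is maximized by taking all inner exponents $\boldsymbol{\beta}^{(i)}$ even, giving $2^n$; and by Corollary~\ref{Cor:NumbersZerosHomoSONC} the number of additional zeros at infinity of a finite-zero homogenization is at most $3$ when $n+1\geq 4$ and at most $2$ when $n+1=3$. Care is needed for $n+1=3$: there a constant-term SONC polynomial may homogenize to a form with infinitely many zeros at infinity, which must be excluded from the supremum. This bookkeeping gives the upper bounds $B''_{n+1,2d}\leq 2^n+3$ ($n+1\geq 4$) and $B''_{2,2d}\leq 4$ ($n=2$) once we know the relevant configurations are realizable in the given degree.

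Next I would feed in the realizability constraints from Lemma~\ref{Lem:RealizabilityCP}. A proper nonnegative circuit polynomial with even inner exponent in $n$ variables requires $d\geq n+1$, i.e.\ $2d\geq 2(n+1)$; with odd inner exponent it requires $2d\geq n+1$. This is exactly what forces the three regimes in part (4): when $2(n+1)\leq 2d$, the "all even $\boldsymbol{\beta}^{(i)}$" configuration with $2^n$ affine zeros plus $3$ at infinity is available, so $B''_{n+1,2d}=2^n+3$; when $n+1\leq 2d<2(n+1)$, even inner exponents are impossible but odd ones in all $n$ variables give $2^{n-1}$ affine zeros, and one still gets $3$ at infinity, so $B''_{n+1,2d}=2^{n-1}+3$; and when $2d<n+1$, a proper circuit polynomial cannot even involve all $n$ variables — its support lives in a coordinate subspace of dimension at most $2d-2$ if we want a genuine inner term with a finite zero set (odd inner exponent with $s\leq 2d-2$ odd coordinates, roughly), yielding the $2^{2d-2}+3$ bound. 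For the lower bounds in each regime I would exhibit explicit forms: generalizations of the Motzkin-type examples in the Example block, e.g.\ $x_0^{2d}+\sum_{i}(\text{even monomials of full degree})-c\,x_1\cdots x_n x_0^{2d-n}$ type constructions arranged so that $n-$ of the outer exponents are full degree in one variable each and $3$ of them pair up three chosen variables (as in Case~4 of the proof of Theorem~\ref{Thm:ExactNumberHomogZeroes}), realizing $2^n$ affine zeros and exactly $3$ zeros at infinity simultaneously.

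For the small cases (parts (1)–(3)) I would argue directly. The case $d=1$, $B''_{2,2}=1$: degree-$2$ forms in $2$ variables are in the Hilbert regime, a finite-zero nonnegative binary quadratic form has a single projective zero, and $C_{2,2}=P_{2,2}$. For $B''_{2,4}=2$, $B''_{2,6}=3$, and $B''_{2,2d}=4$ for $2d\geq8$: apply Theorem~\ref{Thm:ExactNumberHomogZeroes}(1) together with the degree-$4$ exception noted there (no proper bivariate circuit polynomial of degree $4$ with even inner exponent, and the Case~3 two-extra-zeros configuration is unavailable for odd inner exponent in degree $4$), which caps the count at $2$ for $2d=4$ and at $3$ for $2d=6$, while for $2d\geq 8$ both the even-inner-exponent affine count $4$ and the two zeros at infinity are simultaneously realizable. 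For $B''_{3,4}=3$ and $B''_{3,2d}=7$ for $2d\geq 6$: in $n+1=3$ variables, from Corollary~\ref{Cor:NumbersZerosHomoSONC}(1)(ii) a finite-zero non-constant-term homogenization has at most $2^2+3=7$ zeros when all $\boldsymbol{\beta}^{(i)}$ are even and $2d\geq 6$, with the degree-$4$ exception forcing the value $3$ there; I would also need to check that the $=7$ case (five affine zeros in $(\R^*)^2$ plus the origin, which is $5$, plus two at infinity) genuinely occurs with a finite zero set, using a Choi–Lam-type witness as in Example~(iii).

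The main obstacle I expect is the careful separation, for $n+1=3$, between the cases producing finitely many zeros and those producing infinitely many at infinity — ensuring the witness forms that attain the claimed suprema really have \emph{finite} zero sets (so they count toward $B''$) while still achieving the maximal number of zeros at infinity allowed by Theorem~\ref{Thm:ExactNumberHomogZeroes}. A secondary subtlety is the interplay of the properness/realizability constraints of Lemma~\ref{Lem:RealizabilityCP} with the requirement that the $3$ (or $2$) zeros at infinity and the maximal affine zero set coexist on a single circuit polynomial of the prescribed degree; the explicit constructions must be checked to satisfy all circuit conditions (C1), (C2) simultaneously, and in the low-degree regimes $2d<n+1$ one must pin down precisely how small the effective number of variables can be forced to be.
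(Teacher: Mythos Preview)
Your overall strategy matches the paper's: combine the affine zero counts of Theorem~\ref{Thm:ExactNumberAffineZeroes} and Corollary~\ref{Cor:ExactNumberAffineZeroes_SONC}, the ``zeros at infinity'' analysis of Theorem~\ref{Thm:ExactNumberHomogZeroes} and Corollary~\ref{Cor:NumbersZerosHomoSONC}, and the realizability thresholds of Lemma~\ref{Lem:RealizabilityCP}, then exhibit explicit witness forms. That is exactly how the paper proceeds.

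However, there is a recurring off-by-one confusion between $n$ and $n{+}1$ that produces concrete errors. The quantity $B''_{2,2d}$ concerns binary forms, i.e.\ $n{+}1=2$ and hence $n=1$ (univariate dehomogenization). Theorem~\ref{Thm:ExactNumberHomogZeroes}(1) is stated for $n=2$ and therefore does \emph{not} apply to part~(2); the paper instead argues part~(2) by directly enumerating the possible univariate circuit forms in each degree and reading off their projective zero sets. Your sentence ``the even-inner-exponent affine count $4$ and the two zeros at infinity'' is wrong for $n=1$: the even-$\beta$ affine count is $2^{1}=2$, and on $\P^1$ there is at most one point at infinity plus possibly the origin, which is how one arrives at $4$ for $2d\geq 8$. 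Similarly, in part~(3) you write ``five affine zeros in $(\R^*)^2$ plus the origin, which is $5$'': the count in $(\R^*)^2$ is $2^2=4$, and adding the origin gives $5$; then two at infinity give $7$. Finally, in the $2d<n+1$ regime of part~(4) the paper's argument is cleaner than your sketch: one observes that no proper circuit form in $n{+}1$ variables exists, so the extremal SONC form is a proper circuit form in $2d$ variables padded by monomial squares, giving $B''_{n+1,2d}=B''_{2d,2d}=2^{2d-2}+3$; your ``support lives in a coordinate subspace of dimension at most $2d-2$'' misstates this (the effective number of affine variables is $2d-1$, yielding $2^{(2d-1)-1}$ affine zeros). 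None of these are missing ideas, but they must be cleaned up before the argument is correct as written.
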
	

\begin{proof}
	\renewcommand{\labelenumi}{(\arabic{enumi})}
	~\vspace*{-0.15cm}	\begin{enumerate}
		\item For $d=1$ we have a special case, since the only possibility for a proper SONC form of degree $2$ is the circuit form $\ovl f= f_{\boldsymbol{\alp}(0)}x_0^2 + f_{\boldsymbol{\alp}(1)}x_1^2 - \Theta_f x_0x_1$, which only has one zero $[1:1]$. Even if we consider a sum of monomial squares, the only zero in the case of degree $2$ would be $[0:0] \notin \P^2$.
		\item First, note that the maximum number of zeros of a sum of monomial squares in the case $n+1=2$ is $2$. Namely the single monomial square $x_0^2x_1^2$ has the zeros $[0:1]$ and $[1:0]$. If we consider proper SONC forms, then the number of zeros depends on the degree, because certain vertex constellations are only possible from a certain degree on. For $2d=4$ we have, up to renumbering of the variables, only two possible circuit forms $\ovl f_1= f_{\boldsymbol{\alp}(0)}x_0^4+f_{\boldsymbol{\alp}(1)}x_1^4-\Theta_{f_1}x_0^2x_1^2$ with zeros $[1:1],[1:-1]$ and $\ovl f_2=f_{\boldsymbol{\alp}(0)}x_0^2x_1^2+f_{\boldsymbol{\alp}(1)}x_0^4-\Theta_{f_2}x_0^3x_1$ with zeros $[1:1], [0:1]$. Therefore, the first assertion in (2) holds. The second follows by observing that for $2d=6$ there exists for the first time a circuit form with the following term structure: one outer term consists of both variables and the inner term has an even exponent, i.e., $\ovl f= f_{\boldsymbol{\alp}(0)}x_0^2x_1^4+f_{\boldsymbol{\alp}(1)}x_0^6-\Theta_f x_0^4x_1^2$. This yields the zeros $[1:1], [1:-1],$ and $[0:1]$. Lastly, if $2d\geq8$, a circuit form with even inner exponent exists, for which both outer terms consist of both variables: $\ovl f= f_{\boldsymbol{\alp}(0)}x_0^2x_1^6+f_{\boldsymbol{\alp}(1)} x_0^6x_1^2- \Theta_f x_0^4x_1^4$. It has the four zeros $[1:1], [1:-1], [0:1]$, and $[1:0]$. Obviously, a bivariate SONC form cannot have more than $4$ zeros. 
		\item Observe that in the case of $n+1=3$ the maximum number of zeros by a sum of monomial squares is $3$. More precisely we consider without loss of generality the following sum of monomial squares in degree $4$: $\ovl m=x_0^2x_1^2+x_1^2x_2^2+x_0^2x_2^2$. Obviously, $[1:0:0],[0:1:0]$, and $[0:0:1]$ are the zeros of $\ovl m$.\\ For reasons of realizability, see Lemma~\ref{Lem:RealizabilityCP}, a proper circuit form of degree $4$ has an odd inner exponent, and for $2d\geq 6$, also a proper circuit form with even inner exponent is possible. Thus, the statements follow immediately by Corollary~\ref{Cor:NumbersZerosHomoSONC}~(1) and the preliminary consideration.  
		\item The last two assertions are a direct result of Lemma~\ref{Lem:RealizabilityCP} and Corollary~\ref{Cor:NumbersZerosHomoSONC}~(2). 
		Note that as for $n+1=3$, the maximum number of zeros by a sum of monomial squares is $3$.
		In the case $2d<n+1$ there exists no proper circuit form. Though there are SONC forms $\ovl p$ consisting of a sum of a proper circuit form $\ovl f$ and a sum of monomial squares. We know that a proper circuit form with odd inner exponent exists if the number of variables $(n+1)$ is equal to $2d$. Hence, if $2d<n+1$ we have the following SONC form:
		\begin{align*}
		\ovl p= \ovl f + x_{2d+1}^{2d} + \cdots + x_n^{2d},
		\end{align*} 
		\noindent where $\ovl f$ is a $2d$-variate proper circuit form. Thus, $|\cV(\ovl p)|=|\cV(\ovl f)|$, which leads to the equality $B''_{n+1,2d}=B''_{2d,2d}$. By Corollary~\ref{Cor:NumbersZerosHomoSONC}~(2), with $n+1=2d$, we have $B''_{2d,2d}=2^{(2d-1)-1}+3$.
	\end{enumerate}	
	\vspace{-\topsep}
\end{proof}

The following example illustrates the considerations of case (4) for $2d<n+1$ in the proof above.
\begin{example}
	We want to verify the calculation $B''_{7,4}=2^{4-2}+3=7$. Let $\ovl p \in \partial \ovl P_{7,4}$ be a SONC form. Obviously $4<7$, therefore we search a proper $4$-variate circuit form. Consider for instance  
	\[\ovl f= \frac{1}{4}x_0^2x_1^2+\frac{1}{4}x_1^2x_2^2+\frac{1}{4}x_2^2x_0^2+\frac{1}{4}x_3^4-x_0x_1x_2x_3. \] 
	The zero set of this form is 
	\begin{gather*}
	\cV(\ovl f)=\lf\{ [1:1:1:1], [1:1:-1:-1], [1:-1:1:-1], [-1:1:1:-1],\ri. \\ \lf.[1:0:0:0], [0:1:0:0], [0:0:1:0] \ri\}.
	\end{gather*}
	Hence, $|\cV(\ovl f)|=7$. Thus, the SONC form $\ovl p$, 
	\[
	\ovl p= \frac{1}{4}x_0^2x_1^2+ \frac{1}{4}x_1^2x_2^2+ \frac{1}{4}x_2^2x_0^2+ \frac{1}{4}x_3^4- x_0x_1x_2x_3 + x_4^4+x_5^4+x_6^4,
	\]
	has the same number of zeros as $\ovl f$, namely $7$. 
	\endexa
\end{example}
\smallskip
We conclude this section with a short comparison of the $``B$-numbers'' of the three different cones $\ovl P_{n+1,2d}, \ovl \Sig_{n+1,2d}$, and $\ovl C_{n+1,2d}$.
\vspace*{0.15cm}
\begin{remark}
	\renewcommand{\labelenumi}{(\roman{enumi})}
	~\vspace*{-0.15cm}	
	\begin{enumerate}
		\item First note that $B_{2,2}=B'_{2,2}=B''_{2,2}=1$, which is in line with the fact, that for $(n+1,2d)=(2,2)$ the three cones coincide, see Theorem~\ref{Thm:MissingPieceConeContainment}.
		\item In the bivariate case one has $B_{2,2d}=B'_{2,2d}=d$, which equals $B''_{2,2d}$ for $2d\leq 8$. Therefore, we have a first difference in the number of real zeros for degree $10$.
		\item In the case $n+1=3$ we have the following observations, $B_{3,4}=B'_{3,4}=B''_{3,4}=4$. But from degree $6$ on, there are differences in the numbers of zeros: $B_{3,6}=10$, $B'_{3,6}=9$, and $B''_{3,6}=7$. 
		\item Finally, consider $n+1=4$. Here, we already have differences for quartics: \linebreak $B_{4,4}=10$, $B'_{4,4}=8$, and $B''_{4,4}=7$. 
	\end{enumerate}
	
\end{remark}
\medskip

\section{Exposed Faces of the SONC Cone in Small Dimension and Dimension Bounds}
\label{Sec:ExposedFaces}

The aim of this section is to provide a first approach to the study of the exposed faces of $C_{n,2d}$. Understanding the facial structure of the cone as well as the relationship between $P_{n,2d}$ and $C_{n,2d}$ is interesting from many perspectives in both pure and applied real algebraic geometry. Unfortunately, as mentioned in the introduction, even for the cones $P_{n,2d}$ and $\Sig_{n,2d}$ this endeavor is still an active area of research, which is not yet well understood. 
Building upon the results of the real zeros of Section~\ref{Sec:RealZeros} we analyze the dimensions of the exposed faces of $C_{n,2d}$ and compare those with the exposed faces of $P_{n,2d}$. 

We begin with providing a brief theoretical overview of exposed faces, where we also recall results for the exposed faces of $P_{n,2d}$ and $\Sig_{n,2d}$. Afterward, we derive estimates for the dimensions of the exposed faces of $C_{n,2d}$ and study some first special cases in small dimension, leading to interesting directions for further research.\\

In what follows, we restrict ourselves to the affine case.\\
Recall from the introduction that given a convex set $S\subset \R^n$, a face $F$ of $S$ is exposed if there exists an affine hyperplane $H$ in $\R^n$ such that $F=S\cap H$. 
Let $\struc{\Gamma}$ be a finite set of points in $\R^{n}$. The polynomials in $C_{n,2d}$ vanishing at all points of $\Gamma$ form an \emph{exposed face} of $C_{n,2d}$, which we define as $C_{n,2d}(\Gamma)$:
\begin{align*}
\struc{C_{n,2d}(\Gamma)}\;\coloneqq\; \{p\in C_{n,2d}:\;p(\mathbf{s})=0 \text{ for all } \mathbf{s}\in \Gamma\}.
\end{align*}
Analogously, let $\struc{P_{n,2d}(\Gamma)}$ and $\struc{\Sig_{n,2d}(\Gamma)}$ denote the exposed faces  of $P_{n,2d}$ and $\Sig_{n,2d}$ respectively, i.e.,  $P_{n,2d}(\Gamma)$ and $\Sig_{n,2d}(\Gamma)$ are the sets of all polynomials in $P_{n,2d}$ and $\Sig_{n,2d}$, respectively, that vanish at all points of $\Gamma$. In fact, any exposed face of $P_{n,2d}$ has this description, see \cite{Blekherman:Parrilo:Thomas}. We start with collecting some observations for $P_{n,2d}(\Gamma)$ and $\Sig_{n,2d}(\Gamma)$. For this, following \cite{Reznick:OnHilbertsConstructionofPositivePolys}, we denote by $\struc{I(\Gamma)_{r,2d}}$ the vector space of those polynomials $p\in \R[\mathbf{x}]_{n,2d}$, which have an $r$-th order zero at each $\mathbf{s}\in \Gamma$. Then, 
\begin{align*}
\struc{I(\Gamma)_{1,d}}&\;\coloneqq\;\{p\in \R[\mathbf{x}]_{n,d}: \;p(\mathbf{s})= 0 \text{ for all } \mathbf{s}\in \Gamma\},\\
\struc{I(\Gamma)_{2,2d}}&\;\coloneqq\;\{p\in \R[\mathbf{x}]_{n,2d}: \;\nabla p(\mathbf{s})= 0 \text{ for all } \mathbf{s}\in \Gamma\}.
\end{align*}
Here $\struc{\nabla p}$ denotes the \emph{gradient} of $p$. Without degree bounds $I(\Gamma)_{1}$ is the vanishing ideal of $\Gamma$, and $I(\Gamma)_{2}$ is the second symbolic power of $I(\Gamma)_{1}$. \\
Clearly, $P_{n,2d}(\Gamma) \subset I(\Gamma)_{2,2d}$, since for nonnegative polynomials $p$ zeros are local \linebreak minima, which implies that the gradient of $p$ at the zeros must vanish as well. Whereas for the set of exposed faces of the SOS cone we have $\Sig_{n,2d}(\Gamma)\subset I(\Gamma)_{1,d}^2$, where \linebreak $\struc{I(\Gamma)_{1,d}^2}=\lf\{\sum_i \alp_if_ig_i : f_i,g_i \in I(\Gamma)_{1,d}, \alp_i \in \R\ri\}=\lf\{\sum_i \alp_ih_i^2: h_i \in I(\Gamma)_{1,d}, \alp_i \in \R\ri\}$. \!Actually, one can show that this inclusion is full-dimensional, i.e., $\dim(\Sig_{n,2d}(\Gamma))= \dim(I(\Gamma)_{1,d}^2)$. Obviously $I(\Gamma)_{1,d}^2\subseteq I(\Gamma)_{2,2d}$ holds. Thus subsequent questions concern the full-dimen\-sionality of $P_{n,2d}(\Gamma)$ in $I(\Gamma)_{2,2d}$ and then, the equality of $I(\Gamma)_{2,2d}$ and $I(\Gamma)_{1,d}^2$. These questions are discussed in \cite{Blekherman:Iliman:Kubitzke:ExposedFaces}. Therein the authors showed $\dim(P_{n,2d}(\Gamma)) = \dim(I(\Gamma)_{2,2d})$ under some assumptions on the set $\Gamma$, namely, if $\Gamma$ is ``$d$-independent''. Moreover, they provided an answer for the second question again  under some assumptions on the set $\Gamma$ and characterized those cases, where $\dim(I(\Gamma)_{2,2d})$ is strictly greater than $\dim(I(\Gamma)_{1,d}^2)$.\\
In what follows, we use the subsequent observation for the computation of $\dim(P_{n,2d}(\Gamma))$. Since in $n$ variables a second order zero imposes $n+1$ linear conditions which not \linebreak necessarily are all independent, we get
\begin{align}
\label{Eq:AlexHirsch}
\dim(I(\Gamma)_{2,2d})\;\geq\; \dim(\R[\mathbf{x}]_{n,2d}) - |\Gamma|\cdot(n+1)\;=\;\binom{n+2d}{2d}- |\Gamma|\cdot(n+1).
\end{align}
By the Alexander-Hirschowitz Theorem \cite{Miranda:AlexanderHirschowitzThm} it follows that generically, with \linebreak exception of $2d=2$, we have equality in the above inequality.  

\medskip

We now analyze $C_{n,2d}(\Gamma)$. Clearly, we have $C_{n,2d}(\Gamma)\subseteq P_{n,2d}(\Gamma)$. Immediate \linebreak subsequent questions are: What is the dimension of $C_{n,2d}(\Gamma)$? Are there cases where $C_{n,2d}(\Gamma)$ is full-dimensional in $P_{n,2d}(\Gamma)$, i.e., $\dim(C_{n,2d}(\Gamma)) = \dim(P_{n,2d}(\Gamma)) $? 

\smallskip

Henceforth, we consider $|\Gamma|=2^n$ and $|\Gamma|=2^{n-1}$.
To begin with, we state an important observation regarding the dimension of $C_{n,2d}(\Gamma)$. Obviously, $\dim(C_{n,2d}(\Gamma))$ equals the number of linear independent SONC polynomials in $C_{n,2d}$ vanishing at all points of $\Gamma$. Thus, it suffices to study nonnegative circuit polynomials or, in fact, agiforms $f$, where all entries of all zeros $\mathbf{v}$ of $f$ have norm one, i.e., $|v_i|=1$ for all $i=1,\ldots,n$. An \struc{\emph{agiform}} is a special case of a circuit polynomial when choosing $f_{\boldsymbol{\alp}(j)}=\lambda_j$ and $f_{\boldsymbol{\beta}} =-1$, see \cite{Reznick:AGI}. We therefore limit our subsequent analysis to agiforms.  

As a first result in this context, we give an upper bound on the dimension of $C_{n,2d}(\Gamma)$.

\vspace*{0.15cm}
\begin{prop}
	\label{Prop:DimensionBoundExposedFaces}
	~\vspace*{-0.15cm}	\begin{enumerate}
		\item[$(1)$] Let $|\Gamma|=2^n$. Then $\dim(C_{n,2d}(\Gamma)) \leq \binom{n+d}{d}$. 
		\item[$(2)$] Let $|\Gamma|=2^{n-1}$. Then $\dim (C_{n,2d}(\Gamma)) \leq \binom{n+2d}{2d}$. 
	\end{enumerate}
\end{prop}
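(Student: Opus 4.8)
The plan is to replace the face $C_{n,2d}(\Gamma)$ by its linear span and then bound which monomials can occur in its elements. The first observation I would use is general: since $C_{n,2d}$ is a convex cone generated by nonnegative circuit polynomials and $C_{n,2d}(\Gamma)$ is a face of it, $C_{n,2d}(\Gamma)$ is generated as a cone by the nonnegative circuit polynomials it contains. Hence $\dim C_{n,2d}(\Gamma)=\dim\aff(C_{n,2d}(\Gamma))$ equals the dimension of the $\R$-span of $\{f : f \text{ a nonnegative circuit polynomial vanishing on } \Gamma\}$, and by the reduction discussed just before the proposition these generators $f$ may be taken to be nondegenerate agiforms all of whose zeros have entries of absolute value one, so that $\cV^*(f)\subseteq\{-1,1\}^n$ and in particular $\Gamma\subseteq\cV^*(f)$.

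For part $(1)$ one has $\Gamma=\{-1,1\}^n$, and the crux is to show that every such generator $f$ has an even inner exponent. First, $f$ cannot be a monomial square, because a monomial square is nonzero at every point of $\{-1,1\}^n$; so $f$ is a proper circuit polynomial, and having a real zero it lies on $\partial C_{n,2d}$. Next, if its inner exponent $\boldsymbol\beta$ were odd, Theorem~\ref{Thm:ExactNumberAffineZeroes} would give $|\cV^*(f)|=2^{n-1}<2^n=|\Gamma|$, contradicting $\Gamma\subseteq\cV^*(f)$; hence $\boldsymbol\beta$ is even, and with condition (C1) this forces every exponent of $f$ to be even. Therefore $C_{n,2d}(\Gamma)$ is contained in $W:=\operatorname{span}_\R\{\mathbf x^{2\boldsymbol\gamma}:\boldsymbol\gamma\in\N^n,\ |\boldsymbol\gamma|\le d\}$, and I would finish by counting $\dim W=\#\{\boldsymbol\gamma\in\N^n:|\boldsymbol\gamma|\le d\}=\binom{n+d}{d}$. (Via the substitution $y_i=x_i^2$ the vanishing on $\Gamma$ even becomes the single linear condition $P(1,\dots,1)=0$ in $\R[\mathbf y]_{n,d}$, so in fact $\dim C_{n,2d}(\Gamma)\le\binom{n+d}{d}-1$; since the sharper statement is the content of Proposition~\ref{Prop:DimensionBoundExposedFacesRefined}, I would not push for it here.)

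For part $(2)$, $\Gamma$ is the zero set of an agiform with odd inner exponent, hence has the form $\{\boldsymbol\epsilon\in\{-1,1\}^n:\prod_{i\in S}\epsilon_i=1\}$ for a nonempty $S\subseteq\{1,\dots,n\}$ that is determined by $\Gamma$. Running the same analysis, each circuit-polynomial generator $f$ of $C_{n,2d}(\Gamma)$ either has an even inner exponent (and then lies in $W$ as above), or has an odd inner exponent; in the latter case $\cV^*(f)$ and $\Gamma$ are nested cosets of equal size $2^{n-1}$, hence equal, which forces the set of odd coordinates of the inner exponent of $f$ to be exactly $S$. Thus every generator lies in $W\oplus W'$, where $W':=\operatorname{span}_\R\{\mathbf x^{\boldsymbol\gamma}:\boldsymbol\gamma\equiv\mathbf 1_S\ (\mathrm{mod}\ 2),\ |\boldsymbol\gamma|\le 2d\}$, and since $W\oplus W'\subseteq\R[\mathbf x]_{n,2d}$ I would conclude $\dim C_{n,2d}(\Gamma)\le\dim\R[\mathbf x]_{n,2d}=\binom{n+2d}{2d}$.

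The main obstacle lies entirely in part $(1)$: establishing that a generator vanishing on \emph{all} $2^n$ sign points must have an even inner exponent and hence only even monomials — this is exactly where Theorem~\ref{Thm:ExactNumberAffineZeroes} together with the reduction to norm-one agiforms earns its keep. Part $(2)$ is comparatively effortless: the odd inner exponent blocks the $y_i=x_i^2$ reduction, so the best one gets from this argument is the ambient dimension $\binom{n+2d}{2d}$, and improving it is what the refined proposition is for.
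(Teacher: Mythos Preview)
Your argument is correct and follows essentially the same route as the paper: reduce to norm-one agiforms, invoke Theorem~\ref{Thm:ExactNumberAffineZeroes} to force an even inner exponent when $|\Gamma|=2^n$, and then bound by the span of the admissible monomials (the paper phrases this last step as a matrix-rank bound, you as containment in a linear subspace---these are equivalent). One small slip in an aside: the $-1$ sharpening you mention in part~(1) is \emph{not} the content of Proposition~\ref{Prop:DimensionBoundExposedFacesRefined}, which treats the case $|\Gamma|=2^{n-1}$; and your finer decomposition $W\oplus W'$ in part~(2), while correct under the stated assumption on the shape of $\Gamma$, goes beyond what is needed for the bound $\binom{n+2d}{2d}$ and anticipates the later refinement rather than the present proposition.
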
	

\vspace*{0.15cm}
\begin{proof}
	~\vspace*{-0.15cm}\begin{enumerate}
		\item[(1)] Let $|\Gamma|=2^n$ and let $f$ be an agiform in $C_{n,2d}(\Gamma)$. By Theorem~\ref{Thm:ExactNumberAffineZeroes} $f$ must have an even inner exponent, so the whole support of $f$ is even. As already noted above, the dimension of $C_{n,2d}(\Gamma)$ is equal to the number of linear independent agiforms in $C_{n,2d}$ vanishing at all points of $\Gamma$. Thus,  $\dim(C_{n,2d}(\Gamma))$ is equivalent to the rank of the matrix $A \in \R^{m\times N(n,d)}$, where $m$ is the number of all agiforms in $C_{n,2d}$ vanishing at all points of $\Gamma$ and $\struc{N(n,d)}\coloneqq\binom{n+d}{d}$ is the number of all monomials $\mathbf{x}^{\boldsymbol{\alp}} \in \R[\mathbf{x}]_{n,2d}$ with even exponents $\boldsymbol{\alp}$. Since \linebreak $\rank(A)\leq \min\{m,N(n,d)\}$ and $m \geq N(n,d)$ for $2d\geq 6$ we conclude that the rank of $A$ is at most $\binom{n+d}{d}$. 
		
		\item[(2)] Now let $|\Gamma|=2^{n-1}$. Observe that the inner exponent of an agiform vanishing at all points of $\Gamma$ may be both even and odd. Therefore we have to take all monomials up to degree $2d$ into account, whereby the matrix $A$ from above is in $\R^{m\times N(n,2d)}$. The result now follows by similar arguments as for Statement (1). 
	\end{enumerate}
	%
	%
	\vspace{-\topsep}
\end{proof}

\vspace*{1mm}
Note that the dimension bound for $|\Gamma|=2^{n-1}$ is very naive and by comparison with $P_{n,2d}(\Gamma)$ also not likely to be sharp at all. In Section~\ref{SubSec:ExpFaces_ImprovedBound} we provide an improvement of this bound. It is a delicate question to exactly determine the number of linear independent agiforms in $C_{n,2d}$ vanishing at all points for a given $\Gamma$. Here we give a complete answer in the univariate case.

\subsection{The univariate Case}

In this subsection we study the univariate case. Since here we actually may count the dimension by hand, we initially compute $\dim (C_{1,2d}(\Gamma))$ for some small degree. Then, we determine the dimension of $C_{1,2d}(\Gamma)$ for general degree.\\

First, let $\Gamma = \{1,-1\}$, hence $|\Gamma|=2$. There is no agiform of degree $2$ vanishing at both $s\in \Gamma$, since the inner exponent cannot be even. The following table shows the calculation of the number of univariate linear independent agiforms vanishing at $1$ and $-1$ for the degree $4\leq 2d\leq 14$. We also compute $\dim(P_{1,2d}(\Gamma))$ for these cases:

\vspace*{0.4cm}
\begin{center}
	\begin{tabular}{c||C{1cm}|C{1cm}|C{1cm}|C{1cm}|C{1cm}|C{1cm}}
		2d 	 & $4$ & $6$ & $8$ & $10$ & $12$ & $14$  \\
		\hline
		\hline
		\rule{0pt}{15pt} $\dim (C_{1,2d}(\Gamma))$ & $1$ & $2$ & $3$ & $4$ & $5$ & $6$\rule{0pt}{15pt}\\
		\hline
		\rule{0pt}{15pt} $\dim(P_{1,2d}(\Gamma))$ & & $2$ & $5$ & $7$ & $9$ & $11$\rule{0pt}{15pt}
	\end{tabular}
\end{center}
\vspace*{0.4cm}

For degree $2d=4$ the dimension of $P_{1,4}(\Gamma)$ is omitted since it is unclear if the equality $\dim(P_{1,4}(\Gamma)) = \dim(I(\Gamma)_{2,4})$ holds in this case. 
Exemplary, we explain the calculations for $2d=8$. The dimension of $P_{1,8}(\Gamma)$ follows by \eqref{Eq:AlexHirsch}: $\dim(P_{1,8}(\Gamma))=\binom{8}{9}-2\cdot 2=5.$ Agiforms of degree $8$ vanishing on the given set have the form $\lambda_0+\lambda_1x^8-x^{\beta}$, where $\beta$ has to be even. Therefore, $\beta\in \{2,4,6\}$ and  $\dim (C_{1,8}(\Gamma))=3$.
Note that the first dimensional difference of the exposed faces of $C_{1,2d}$ and $P_{1,2d}$ is in degree~$8$. 

 We now consider $|\Gamma|=1$, i.e., $\Gamma=\{1\}$. Recall that we reduce our study to the case of agiforms, for which $-1$ is not a zero. This leads to the following dimensions:

\vspace*{0.4cm}
\begin{center}
	\begin{tabular}{c||C{1cm}|C{1cm}|C{1cm}|C{1cm}|C{1cm}|C{1cm}}
		2d	& $2$ & $4$ & $6$ & $8$ & $10$ & $12$  \\
		\hline
		\hline
		\rule{0pt}{15pt} $\dim (C_{1,2d}(\Gamma))$ & $1$ & $3$ & $5$ & $7$ & $9$ & $11$\rule{0pt}{15pt}\\
		\hline
		\rule{0pt}{15pt} $\dim(P_{1,2d}(\Gamma))$ &	 & $3$ & $5$ & $7$ & $9$ & $11$\rule{0pt}{15pt}
	\end{tabular}
\end{center}
\vspace*{0.4cm}

 Remember that $2d=2$ is one exception in the Alexander-Hirschowitz Theorem. 
Again we justify these computations for the degree $2d=8$. Equation \eqref{Eq:AlexHirsch} directly yields $\dim(P_{1,8}(\Gamma))=7$. Here, agiforms are of the same form as for the case $|\Gamma|=2$, only  with the difference, that $\beta$ might also be odd. Hence, $1\leq \beta\leq 7$, which leads to $\dim(C_{1,8}(\Gamma))=7$. Obviously, there is no dimensional gap between the dimensions of the exposed faces of $C_{1,2d}$ and $P_{1,2d}$ for the considered degrees.

\vspace*{2mm}
With these calculations in mind, we provide the exact dimension of $C_{1,2d}(\Gamma)$ in the general case of degree $2d$:
\vspace*{0.15cm}
\begin{lemma}
	~\vspace*{-0.15cm}\begin{enumerate}
		\item[$(1)$] For $|\Gamma|=2$, we have $\dim(C_{1,2d}(\Gamma))= d-1$, if $d\geq 2$. 
		\item[$(2)$] For $|\Gamma|=1$, we have $\dim(C_{1,2d}(\Gamma))= 2d-1$.
	\end{enumerate}
\label{Lem:DimExpFaceUnivariateCase}
\end{lemma}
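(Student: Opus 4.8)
The plan is to compute $\dim(C_{1,2d}(\Gamma))$, which equals the dimension of the linear span of the cone $C_{1,2d}(\Gamma)$, by sandwiching it between a concrete ambient linear space (giving the upper bound) and the span of an explicit list of agiforms lying in $C_{1,2d}(\Gamma)$ (giving the matching lower bound).

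\emph{Upper bound.} Every $p\in C_{1,2d}(\Gamma)$ is nonnegative of degree at most $2d$ and vanishes on $\Gamma$; since $0$ is then the global minimum of $p$ at each point of $\Gamma$, the derivative $p'$ vanishes there too, so $C_{1,2d}(\Gamma)\subseteq I(\Gamma)_{2,2d}$. For $|\Gamma|=1$, i.e.\ $\Gamma=\{1\}$, this already gives $\dim(C_{1,2d}(\Gamma))\le\dim I(\{1\})_{2,2d}=(2d+1)-2=2d-1$, because $p(1)=0$ and $p'(1)=0$ are linearly independent conditions on $\R[x]_{1,2d}$. For $|\Gamma|=2$, i.e.\ $\Gamma=\{1,-1\}$, one needs the sharper observation that every $p\in C_{1,2d}(\Gamma)$ has \emph{even support}: writing $p=\sum_i\mu_i f_i$ with $\mu_i>0$ and $f_i$ nonnegative circuit polynomials, nonnegativity forces $f_i(1)=f_i(-1)=0$ for each $i$; each such $f_i$ is proper, nondegenerate, and — having a zero — lies on $\partial C_{1,2d}$, so by Theorem~\ref{Thm:ExactNumberAffineZeroes} it would have $|\cV^*(f_i)|=2^{0}=1$ if its inner exponent were odd, contradicting $1,-1\in\cV^*(f_i)$. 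Hence every $f_i$ has even inner exponent, so even support, and therefore so does $p$. Thus $C_{1,2d}(\Gamma)$ lies in the space of even polynomials of degree at most $2d$ with $p(1)=p'(1)=0$ (the vanishing at $-1$ being automatic by evenness), and since these two functionals are independent this space has dimension $(d+1)-2=d-1$.

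\emph{Lower bound.} For every integer $\beta$ with $0<\beta<2d$ consider
\[
g_\beta\;=\;\Bigl(1-\tfrac{\beta}{2d}\Bigr)\;+\;\tfrac{\beta}{2d}\,x^{2d}\;-\;x^{\beta}.
\]
This is an agiform: $\New(g_\beta)=[0,2d]$ is a simplex with even vertices, the barycentric coordinates of $\beta$ are $\lambda_0=1-\tfrac{\beta}{2d}$ and $\lambda_1=\tfrac{\beta}{2d}$ (both positive), these equal the outer coefficients, and $\Theta_{g_\beta}=1$, so $g_\beta$ is a nonnegative circuit polynomial by Theorem~\ref{Thm:CircuitPolynomialNonnegativity}. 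Since $g_\beta(1)=\lambda_0+\lambda_1-1=0$, we have $g_\beta\in C_{1,2d}(\{1\})$ for all $\beta\in\{1,\dots,2d-1\}$, and since $g_\beta(-1)=0$ additionally when $\beta$ is even, $g_\beta\in C_{1,2d}(\{1,-1\})$ for all $\beta\in\{2,4,\dots,2d-2\}$. In each of these two families the monomial $x^\beta$ with $0<\beta<2d$ occurs only in $g_\beta$ itself, so the family is linearly independent; this produces $2d-1$, respectively $d-1$, linearly independent elements of $C_{1,2d}(\Gamma)$ (for $|\Gamma|=2$ the family $\{g_2,g_4,\dots,g_{2d-2}\}$ is nonempty precisely when $d\ge 2$), and combining with the upper bound proves the two equalities.

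The only step that really draws on the earlier results is the even-support reduction in the two-point case: without it, the bound coming from $I(\{1,-1\})_{2,2d}$ is only $2d-3$, which exceeds $d-1$ as soon as $d\ge 3$, and it is exactly the zero count of Theorem~\ref{Thm:ExactNumberAffineZeroes} that forces the support to be even and brings the dimension down to $d-1$. The remaining work — confirming the $g_\beta$ are bona fide nonnegative circuit polynomials and checking independence of the linear conditions involved — is routine.
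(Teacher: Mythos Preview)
Your proof is correct and shares the paper's overall strategy: for the lower bound you both exhibit the explicit agiforms $g_\beta$ indexed by the admissible inner exponents $\beta\in(0,2d)$, and for the two-point case you both invoke Theorem~\ref{Thm:ExactNumberAffineZeroes} to force every summand (hence every $p\in C_{1,2d}(\Gamma)$) to have even support. Where you differ is in the upper bound. The paper simply equates $\dim C_{1,2d}(\Gamma)$ with the number of even (resp.\ all) lattice points strictly between $0$ and $2d$, i.e.\ counts possible inner exponents and writes this as $\binom{n+d}{d}-2$ (resp.\ $\binom{n+2d}{2d}-2$). You instead bound the span from above by the linear subspace $\{p:\,p(1)=p'(1)=0\}$ inside the even (resp.\ all) polynomials of degree at most $2d$. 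Your route makes the ``$-2$'' explicit as two independent linear functionals and thereby justifies rigorously why no agiform with a different choice of outer exponents can enlarge the span beyond that of the $g_\beta$; the paper's count of inner exponents gives the same number but leaves that last point implicit.
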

\enlargethispage{\baselineskip}
\begin{proof}
	~\vspace*{-0.15cm}\begin{enumerate}
		\item[(1)] Let $|\Gamma|=2$. Clearly, in this case the number of linear independent agiforms with even inner exponent equals the number of even lattice points in $\Delta_{1,2d}$ without the vertices. This is equivalent to the number of all monomials $x^{\alp}$ in $\R[x]_{1,2d}$ with even degree $0<\alp<2d$. Hence, $\dim(C_{1,2d}(\Gamma))= \binom{n+d}{d}-2=d -1$. Here we have $d\geq 2$, because, as already noted, in dimension $2d=2$ there exists no agiform with even inner exponent.
		\item[(2)] Now let $|\Gamma|=1$. Analogously to case (1) the sought number equals the number of all monomials $x^{\alp}$ in $\R[x]_{1,2d}$ with $0<\alp<2d$. Thus, it follows immediately $\dim (C_{1,2d}(\Gamma))= \binom{n+2d}{2d}-2= 2d -1$.
	\end{enumerate}
	\vspace{-\topsep}
\end{proof}

\noindent The analysis of $\dim(P_{1,2d}(\Gamma))$ for both finite sets $\Gamma$ yields:
\begin{enumerate}
	\item[(1)] For $|\Gamma|=2$, we have $\dim(P_{1,2d}(\Gamma))= 2d-3$, if $d\geq 3$. 
	\item[(2)] For $|\Gamma|=1$, we have $\dim(P_{1,2d}(\Gamma))= 2d-1$, if $d\geq 2$. 
\end{enumerate}

\noindent Observe that, as indicated by our calculations by hand, we have indeed the equality $\dim(P_{1,2d}(\Gamma))=\dim(C_{1,2d}(\Gamma))$ if $|\Gamma|=1$. For $|\Gamma|=2$ on the contrary, the dimension of the exposed face $P_{1,2d}(\Gamma)$ is nearly twice as large as $\dim(C_{1,2d}(\Gamma))$.

\subsection{The bivariate Case}

We now examine the bivariate case. Again we calculate the dimensions of the exposed faces $C_{2,2d}(\Gamma)$ for some degrees. Already in this case we have to limit the calculation of explicit cases to the two smallest degrees due to the large number of agiforms in $C_{2,2d}$ even for low degrees. \\
For $|\Gamma|=4$, namely $\Gamma=\{(1,1),(-1,-1),(1,-1),(-1,1)\}$, we have:

\vspace*{0.2cm}
\begin{center}
	\begin{tabular}{c||C{1cm}|C{1cm}}
		2d	& $4$ & $6$   \\
		\hline
		\hline
		\rule{0pt}{15pt} $\dim (C_{2,2d}(\Gamma))$ & $3$ & $8$\rule{0pt}{15pt}\\
		\hline
		\rule{0pt}{15pt} $\dim(P_{2,2d}(\Gamma))$ &	 & $16$\rule{0pt}{15pt}
	\end{tabular}
\end{center}
\vspace*{0.4cm}

There is a noticeable dimensional difference between the exposed face of $C_{2,6}$ and $P_{2,6}$ already for $2d=6$. Moreover, observe that the exposed face $C_{2,6}(\Gamma)$ contains the agiform $$f(x_1,x_2)=\frac{1}{3}+ \frac{1}{3}x_1^4x_2^2 + \frac{1}{3}x_1^2x_2^4 - x_1^2x_2^2.$$
This polynomial can easily be detected to be one third of the Motzkin polynomial $g$, i.e., $\frac{1}{3}\cdot g = f$. Thus, we conclude $C_{2,6}(\Gamma)\not\subseteq \Sigma_{2,6}(\Gamma)$ which is in line with our knowledge regarding the cone containment of $C_{n,2d}$ and $\Sig_{n,2d}$.\\

For $|\Gamma|=2$, i.e., $\Gamma=\{(1,1),(-1,-1)\}$, we compute the following  dimensions:

\vspace*{0.2cm}
\begin{center}
	\begin{tabular}{c||C{1cm}|C{1cm}}
		2d	& $2$ & $4$   \\
		\hline
		\hline
		\rule{0pt}{15pt} $\dim (C_{2,2d}(\Gamma))$ & $1$ & $6$\rule{0pt}{15pt}\\
		\hline
		\rule{0pt}{15pt} $\dim(P_{2,2d}(\Gamma))$ &  & $9$\rule{0pt}{15pt}
	\end{tabular}
\end{center}
\vspace*{0.4cm}

Also here we may already detect in the first calculable case dimensional differences of the exposed faces of the analyzed cones. 

When counting the agiforms $f$ that vanish on $\Gamma$ in this case, we observe that not all $\binom{2+2d}{2d}$ possible monomials $\mathbf{x}^{\boldsymbol{\alp}} \in \R[\mathbf{x}]_{2,2d}$ appear in the support of $f$. Since $f$ has to vanish on both $\mathbf{s}\in \Gamma$ the inner exponent must have a special structure. 
To be more precise, for the inner exponent $\boldsymbol{\beta}$ of an agiform vanishing on $\Gamma=\{(1,1),(-1,-1)\}$ it must hold that $|\boldsymbol{\beta}|$ is even. Note that not necessarily each component of $\boldsymbol{\beta}$ has to be even, that is, it may hold that $\boldsymbol{\beta}\not\in (2\N)^n$. 

\vspace*{2mm}
Due to this observation, we can give a refined dimension bound of the exposed face $C_{2,2d}(\Gamma)$ in the case $|\Gamma|=2$ compared to Proposition~\ref{Prop:DimensionBoundExposedFaces} (2). 

\begin{lemma}
	For $|\Gamma|=2$, we have $\dim (C_{2,2d}(\Gamma))\leq d^2 +2d + 1$.
\end{lemma}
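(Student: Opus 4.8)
The plan is to follow the same reduction used in the proof of Proposition~\ref{Prop:DimensionBoundExposedFaces}: $\dim(C_{2,2d}(\Gamma))$ equals the number of linearly independent agiforms in $C_{2,2d}$ that vanish at both points of $\Gamma=\{(1,1),(-1,-1)\}$, so it suffices to identify the coordinate subspace of $\R[\mathbf{x}]_{2,2d}$ in which all such agiforms are supported and to bound its dimension.

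First I would make precise the observation recorded just before the lemma. For an agiform $f=\sum_{j=0}^r \lambda_j \mathbf{x}^{\boldsymbol{\alp}(j)} - \mathbf{x}^{\boldsymbol{\beta}}$ the outer exponents $\boldsymbol{\alp}(j)$ are even by (C1), so $\mathbf{x}^{\boldsymbol{\alp}(j)}$ evaluates to $1$ at any point with entries $\pm 1$. Hence $f(1,1)=\sum_{j}\lambda_j-1=0$ holds automatically, while $f(-1,-1)=1-(-1)^{|\boldsymbol{\beta}|}$. Therefore $f$ vanishes on all of $\Gamma$ exactly when $|\boldsymbol{\beta}|$ is even; note that, as already observed, this does not force $\boldsymbol{\beta}\in(2\N)^2$.

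Next I would translate this into a support restriction: each outer exponent $\boldsymbol{\alp}(j)$ is even, hence has even total degree, and the inner exponent $\boldsymbol{\beta}$ has even total degree by the previous step, so every agiform in $C_{2,2d}(\Gamma)$ lies in the span $W$ of the monomials $x_1^a x_2^b$ with $a+b$ even and $a+b\le 2d$. Since $\dim(C_{2,2d}(\Gamma))$ is the rank of the coefficient matrix of all these agiforms, it is at most $\dim W$.

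Finally I would compute
\[
\dim W \;=\; \sum_{\substack{0\le k\le 2d\\ k\ \mathrm{even}}} (k+1)\;=\;\sum_{j=0}^{d}(2j+1)\;=\;(d+1)^2\;=\;d^2+2d+1,
\]
which yields the asserted bound. There is no serious obstacle here; the only points requiring care are that the reduction to agiforms carried over from Proposition~\ref{Prop:DimensionBoundExposedFaces} is legitimate in the present case $|\Gamma|=2^{n-1}$, $n=2$, and that the count of $W$ correctly includes the constant term together with the monomials having odd partial degrees but even total degree. Sharpness of the bound is not claimed and need not be addressed.
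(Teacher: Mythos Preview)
Your proof is correct and follows essentially the same approach as the paper: both arguments bound $\dim(C_{2,2d}(\Gamma))$ by the number of monomials $\mathbf{x}^{\boldsymbol{\alp}}\in\R[\mathbf{x}]_{2,2d}$ with $|\boldsymbol{\alp}|$ even, using the observation that the inner exponent of any agiform vanishing on $\Gamma=\{(1,1),(-1,-1)\}$ must have even total degree. Your version is simply more explicit, spelling out why $f(1,1)=0$ holds automatically for agiforms and carrying out the monomial count $\sum_{j=0}^d(2j+1)=(d+1)^2$ in detail.
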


\begin{proof}
	The bound follows by counting the involved monomials in the support of the\linebreak agiforms vanishing on $\Gamma=\{(1,1),(-1,-1)\}$, which are all monomials $\mathbf{x}^{\boldsymbol{\alp}} \in \R[\mathbf{x}]_{2,2d}$ with $|\boldsymbol{\alp}|$ even.
\end{proof}

\subsection{Improved Dimension Bound}
\label{SubSec:ExpFaces_ImprovedBound}

If $n$ is even we can be even more precise, which leads to the following improved bound for $\dim (C_{n,2d}(\Gamma))$ when $|\Gamma|=2^{n-1}$:

\begin{prop}
	\label{Prop:DimensionBoundExposedFacesRefined}
	Let the number of variables $n$ be even and $|\Gamma|=2^{n-1}$. Then \linebreak $\dim (C_{n,2d}(\Gamma)) \leq \sum_{i=0}^{d}\binom{n+2i-1}{2i}$. 
\end{prop}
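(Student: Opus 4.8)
The plan is to follow the template of the bivariate bound and reduce the estimate to a count of the monomials that can possibly occur in the support of an agiform vanishing on $\Gamma$. Recall from the discussion preceding Proposition~\ref{Prop:DimensionBoundExposedFaces} that it suffices to work with agiforms all of whose zeros have entries of norm one, so that $\Gamma$ is (after the usual sign normalization) the configuration $\Gamma=\{\mathbf{s}\in\{\pm1\}^n:\prod_{i=1}^n s_i=1\}$ of $2^{n-1}$ points, which generalizes the bivariate choice $\{(1,1),(-1,-1)\}$, and that $\dim(C_{n,2d}(\Gamma))$ equals the rank of the matrix $A$ whose rows are the coefficient vectors of the agiforms in $C_{n,2d}$ vanishing at every point of $\Gamma$, with columns indexed by the monomials $\mathbf{x}^{\boldsymbol{\alp}}$ of degree at most $2d$. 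Hence it is enough to show that every such agiform is supported only on monomials $\mathbf{x}^{\boldsymbol{\alp}}$ with $|\boldsymbol{\alp}|$ even; the claimed bound is then just the number of such monomials.

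First I would nail down the parity statement, which is the step where the hypothesis that $n$ is even is used. Write an agiform vanishing on $\Gamma$ as $f=\sum_{j=0}^{r}\lambda_j\mathbf{x}^{\boldsymbol{\alp}(j)}-\mathbf{x}^{\boldsymbol{\beta}}$ with $\lambda_j>0$, $\sum_j\lambda_j=1$, and even outer exponents $\boldsymbol{\alp}(j)\in(2\N)^n$ by Condition (C1); in particular $|\boldsymbol{\alp}(j)|$ is even for every $j$. Every agiform vanishes at $\mathbf{1}=(1,\ldots,1)$, since $f(\mathbf{1})=\sum_j\lambda_j-1=0$. Because $n$ is even, the antipodal point $-\mathbf{1}$ also lies in $\Gamma$, so $f(-\mathbf{1})=0$; evaluating gives $0=f(-\mathbf{1})=\sum_j\lambda_j(-1)^{|\boldsymbol{\alp}(j)|}-(-1)^{|\boldsymbol{\beta}|}=1-(-1)^{|\boldsymbol{\beta}|}$, which forces $|\boldsymbol{\beta}|$ to be even. (Equivalently one can argue via the description of the zero set in the proof of Theorem~\ref{Thm:ExactNumberAffineZeroes}: a point $\mathbf{v}\in\{\pm1\}^n$ is a zero of $f$ precisely when $\sgn(\mathbf{v}^{\boldsymbol{\beta}})=1$, and requiring this for exactly the $\mathbf{v}$ with $\prod_i v_i=1$ forces the set of odd coordinates of $\boldsymbol{\beta}$ to be either empty or all of $\{1,\dots,n\}$; in the second case $|\boldsymbol{\beta}|\equiv n\equiv 0\pmod 2$, again using $n$ even.) Thus the inner monomial $\mathbf{x}^{\boldsymbol{\beta}}$, like the outer ones, has even total degree, so $\supp(f)$ consists only of monomials of even degree.

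With the parity statement in hand the rest is bookkeeping. The nonzero columns of $A$ are then indexed only by monomials $\mathbf{x}^{\boldsymbol{\alp}}$ with $|\boldsymbol{\alp}|$ even and $|\boldsymbol{\alp}|\le 2d$, so $\dim(C_{n,2d}(\Gamma))=\rank(A)$ is at most the number of such monomials. Since there are $\binom{n+k-1}{k}$ monomials of degree exactly $k$ in $n$ variables, the number of even-degree monomials of degree at most $2d$ is $\sum_{i=0}^{d}\binom{n+2i-1}{2i}$, which is the asserted bound; for $n=2$ this collapses to $\sum_{i=0}^{d}(2i+1)=(d+1)^2=d^2+2d+1$, recovering the bivariate estimate. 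I expect the genuinely delicate point to be the parity analysis of $\boldsymbol{\beta}$ — one must combine the fact that $\New(f)$ is a simplex with even vertices with the structure of the zero set forced by $\Gamma$, and then invoke the parity of $n$ to rule out the all-odd inner exponent; evaluation at $-\mathbf 1$ packages both of these cleanly. As in Proposition~\ref{Prop:DimensionBoundExposedFaces}, realizability constraints (Lemma~\ref{Lem:RealizabilityCP}) only decrease the true dimension, so no lower-bound considerations are needed for the upper bound claimed here.
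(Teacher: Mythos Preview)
Your proof is correct and follows the same approach as the paper: bound $\dim(C_{n,2d}(\Gamma))$ by the number of monomials of even total degree, using that every agiform vanishing on $\Gamma$ has support contained in this set. The paper's proof is extremely terse and simply asserts this containment; you supply the missing justification by evaluating at $-\mathbf{1}\in\Gamma$ (valid precisely because $n$ is even) to force $|\boldsymbol{\beta}|$ even, which is exactly the mechanism the paper has in mind but does not spell out.
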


\begin{proof}
	The dimension of $C_{n,2d}(\Gamma)$ is bounded by the number of all monomials of degree at most $2d$ with even degree. 
	The number of monomials $\mathbf{x}^{\boldsymbol{\alp}} \in \R[\mathbf{x}]_{n,2d}$ having exactly even degree $2i$, i.e., $|\boldsymbol{\alp}|=2i$, is given by $\binom{n+2i-1}{2i}$. Hence, summing over all $i=0,\ldots,d$ yields the right number.
\end{proof}
This argumentation does not hold for $n$ odd, since the inner monomial $\mathbf{x}^{\boldsymbol{\beta}}$ of an agiform vanishing on all $\mathbf{s}\in \Gamma$ with $|\Gamma|=2^{n-1}$ may also have an odd degree. For instance, consider the $3$-variate case and $\Gamma=\{(1,1,1),(1,-1,-1),(-1,1,-1),(-1,-1,1)\}$. The agiform $f=\frac{1}{4}+\frac{1}{4} x_1^4+\frac{1}{4}x_2^4+\frac{1}{4}x_3^4-x_1x_2x_3$ vanishes on every $\mathbf{s}\in \Gamma$, but $|\boldsymbol{\beta}|=3$.

Furthermore, observe that for $n\geq 4$ the bound in Proposition~\ref{Prop:DimensionBoundExposedFacesRefined}
can be further refined. In the occurring sum we also count monomials $\mathbf{x}^{\boldsymbol{\alp}}$ with $\alpha_i=0$ for some $i$. For example if $n=4$ we also take the monomials $x_1^3x_2$, $x_2x_3$, or $x_2x_3x_4^2$ into account. But clearly, these monomials cannot be inner terms of an agiform vanishing on all $\mathbf{s}\in \Gamma$, with $|\Gamma|=8$.

\medskip


\section{Conclusion and Outlook}

This paper provides a complete classification of the real zeros of SONC polynomials and forms yielding some interesting additional results. Using the observations of the real zeros, we initiate the analysis of the exposed faces $C_{n,2d}(\Gamma)$ and provide initial dimension bounds. It would be an interesting task to further improve the dimension bounds (also in the case of an odd number of variables) or actually determining precisely the dimension of the exposed faces of $C_{n,2d}$. Moreover, the gaps between the dimensions of the exposed faces of $C_{n,2d}$ and $P_{n,2d}$ need to be explored in more detail. 

The recent work by Forsg\r{a}rd and de Wolff \cite{Forsgard:deWolff:SONC-Boundary} characterizes the algebraic boundary of the SONC cone. Among other results, the authors provide a description of the semialgebraic stratification of the boundary in the univariate case. The stratification depends on the common zeros of the involved circuit polynomials. We hope to explore possible connections to our results in the future.

\medskip

\bibliographystyle{amsalpha}
\bibliography{main}
	
\end{document}